\newtheorem{theorem}{Theorem}
\newtheorem{lemma}{Lemma}
\newtheorem{proposition}{Proposition}
\newtheorem{corollary}{Corollary}
\newtheorem{conjecture}{Conjecture}
\theoremstyle{definition}
\newtheorem{definition}{Definition}
\newtheorem{claim}{Claim}
\theoremstyle{remark}
\newtheorem{remark}{Remark}
\newcommand{\sepdist}{\mathrm{sep\_dist}}
\newcommand{\Z}{\mathbb{Z}}
\newcommand{\C}{\mathbb{C}}
\newcommand{\N}{\mathbb{N}}
\newcommand{\R}{\mathbb{R}}
\renewcommand{\S}{\mathbb{S}}
\newcommand{\Sd}{\mathbb{S}^d}
\renewcommand{\l}{\lambda}
\newcommand{\K}{\mathcal{K}}
\newcommand{\tr}{\operatorname{tr}}
\newcommand{\Gammaf}[2]{\Gamma\left(\frac{#1}{#2}\right)}
\begin{document}

\title[Energy of determinantal point processes in the sphere]{Energy and 
discrepancy of rotationally invariant determinantal point processes in high 
dimensional spheres}

\author{Carlos Beltr\'an, Jordi Marzo and Joaquim Ortega-Cerd\`a}

\address{ Departamento de Matematicas, Estad\'{\i}stica y Computaci\'on, 
Universidad de Can\-ta\-bria, Avd. Los Castros s/n, 39005, Santander, Spain}
\email{beltranc@unican.es}

\address{Departament de Matem\`atiques i Inform\`atica, Universitat de 
Barcelona \& Barcelona Graduate School of Mathematics, Gran Via 585, 08007, 
Barcelona, Spain}
\email{jmarzo@ub.edu}

\address{Departament de Matem\`atiques i Inform\`atica, Universitat de 
Barcelona \& Barcelona Graduate School of Mathematics, Gran Via 585, 08007, 
Barcelona, Spain}
\email{jortega@ub.edu}

\keywords{Riesz energy, Logarithmic energy, Determinantal processes, Spherical 
harmonics, Discrepancy}
\thanks{This research has been partially supported by MTM2014-57590-P and 
MTM2014-51834-P grants by the Ministerio de Econom\'{\i}a y Competitividad, 
Gobierno de Espa\~na and by the Ge\-ne\-ra\-li\-tat de Catalunya (project 2014 
SGR 289).}

\begin{abstract}
We study expected Riesz $s$-energies and linear statistics of some determinantal 
processes on the sphere $\S^d$. In particular, we compute the expected Riesz and 
logarithmic energies of the determinantal processes given by the reproducing 
kernel of the space of spherical harmonics. This kernel defines the so called 
harmonic ensemble on $\S^d$. With these computations we improve previous 
estimates for the discrete minimal energy of configurations of points in the 
sphere. We prove a comparison result for Riesz $2$-energies of points defined 
through determinantal point processes associated to isotropic kernels. As a 
corollary we get that the Riesz $2$-energy of the harmonic ensemble is optimal 
among ensembles defined by isotropic kernels with the same trace. Finally, we 
study the variance of smooth and rough linear statistics for the harmonic 
ensemble and compare the results with the variance for the spherical ensemble 
(in $\S^2$). 
\end{abstract}

\maketitle

\section{Introduction}\label{sec:introduction}

Let $\S^d$ be the unit sphere in the Euclidean space $\R^{d+1}$ and let $\mu$ be the normalized Lebesgue surface measure. We study Riesz 
$s$-energies and the uniformity (discrepancy and separation) of random 
configurations of points on the sphere $\S^d$ given by some determinantal point 
processes.

\subsection{Riesz energies}
For a given collection of points $x_1,\ldots,x_n\in\S^d$ and $s>0$ the discrete 
$s$-energy associated to the set $x=(x_1,\ldots,x_n)$ is 
\[
E_s(x)=\sum_{i\neq j}\frac{1}{\|x_i-x_j\|^s}.
\]
The minimal Riesz $s$-energy is the value 
$\mathcal{E}(s,n)=\inf_{x} E_s(x)$, where $x$ 
runs on the $n$-point subsets of $\S^d$. The limiting case $s=0$ given (through 
$(t^s-1)/s\rightarrow\log t$ when $s\to 0$) by
\[
E_0(x)=\sum_{i\neq j}\log \frac{1}{\|x_i-x_j\|},
\]
is the discrete logarithmic energy associated to $x$ and 
$\mathcal{E}(0,n)=\inf_{x} E_0(x)$, is the minimal 
discrete logarithmic energy for $n$ points on the sphere.

The asymptotic behavior of these energies, in the spherical and in other 
settings, has been extensively studied. See for example the survey papers 
\cite{BHS12,BG15}. One problem in studying these quantities is to get computable 
examples. It is natural then to study random configurations of points and try to 
estimate asymptotically their energies on average. The next natural question is 
how to get good random configurations on the sphere. It is clear that uniformly 
random points are not good candidates to have low energies because there is no 
local repulsion between points and the sets exhibit clumping. 

A method to get better distributed random points is to take sets of 
zeros of certain random holomorphic polynomials on the plane with independent 
coefficients and transport them to the sphere via the stereographic projection. 
As the zeros repel each other, the configurations exhibit no clumping. This idea 
was used in \cite{ABS11} and the authors managed to get, in $\S^2$, the average 
behavior of the logarithmic energy (other relations between the logarithmic 
energy and polynomial roots are known, see \cite{ShSm93}). See the works of 
Zelditch et al. \cite{SZ08,ZZ08,FZ13} for an extension to several complex 
variables. We consider instead random sets of points given by a determinantal 
process. Random points drawn from a determinantal process exhibit local 
repulsion, they can be built in any dimension and they are computationally 
feasible, as proven in \cite{GAF} and implemented in \cite{SZT09}.

\subsection{Determinantal processes}

In this section we follow \cite[Chap.~4]{GAF}. See also \cite{AGZ,SZT09} or \cite{S00}.

We denote as $\mathcal{X}$ a (simple) random point process in $\S^d$. A way to 
describe the process is to specify the random variable counting the number of points of the process in $D,$
for all Borel sets $D\subset \S^d$. We denote this random variable as $\mathcal{X}(D)$ or $n_D.$
In many cases the point process is 
conveniently characterized by the so called joint intensity functions, see 
\cite{Lenard73, Lenard75}.

The joint intensities $\rho_k(x_1,\dots, x_k)$ are functions defined in 
$(\S^d)^k$ such 
that for any family of mutually disjoint subsets $D_1,\dots ,D_k \subset \S^d$
\[
\mathbb{E}\left[ \mathcal{X}(D_1)\cdots \mathcal{X}(D_k) \right] 
=\int_{D_1\times \dots \times D_k} \rho_k(x_1,\dots, x_k) d\mu(x_1)\dots 
d\mu(x_k),
\]
we assume that $\rho_k(x_1,\dots, x_k)=0$ when $x_i=x_j$ for $i\neq j$.

A random point process on the sphere is called determinantal with kernel 
$K:\S^d\times \S^d \rightarrow \C,$ if it is simple and the joint intensities
with respect to a background measure $\mu$ (the normalized surface measure in 
our case) are given by
\[
\rho_k(x_1,\dots, x_k)=\det (K(x_i,x_j))_{1\le i,j\le k}, 
\]
for every $k\ge 1$ and $x_1,\dots, x_k \in \S^d$.

We will mostly restrict ourselves to a special class of determinantal point 
processes, induced by the so called projection kernels.
\begin{definition}
 We say that $K$ is a projection kernel if it is a Hermitian projection kernel, 
i.e. the integral operator in $L^2(\mu)$ with kernel $K$ is selfadjoint and has 
eigenvalues $1$ and $0$.
\end{definition}

A projection kernel $K(x,y)$ defines a determinantal process 
with $n$ points a.s. if its trace equals $n$, i.e.
\[
 \int_{\S^d}K(x,x)d\mu(x)=n.
\]
In this case, the random vector in $(\S^d)^n$ with density $\frac{1}{n!}\det 
(K(x_i,x_j))_{1\le i,j\le k}$ is a determinantal process with the right 
marginals i.e. the joint intensities are given by determinants of the kernel 
\cite[Remark~4.2.6]{AGZ}.

Determinantal processes on $\S^2$ have been considered before. In \cite{ASZ14} 
the authors study the so called spherical ensemble. The points of this process 
correspond to the generalized eigenvalues of certain random matrices, mapped to 
the surface of the sphere by the stereographic projection. It was shown by 
Krishnapur \cite{Kri09} that this process is determinantal and the kernel is the 
reproducing kernel of a weighted space of polynomials on the plane. In 
\cite{ASZ14} the authors obtain, among other results, the expected Riesz 
logarithmic and $s$-energies (for $s$ in some range) and they use it to improve 
previous bounds for the minimal discrete energies. This point process does not 
have an immediate extension to arbitrary dimensions. 

To compute the expected energy of a determinantal process we use the following 
well known result:

\begin{proposition}\label{th:determinantal}
A projection kernel $K,$ with trace $n,$ defines a determinantal point process 
on $\S^d$ which generates $n$ points at random in $\S^d$. Moreover, let 
$x=(x_1,\ldots,x_n)$ be generated by the associated point process. Then, for any 
measurable $f:\S^d\times\S^d\rightarrow[0,\infty)$ we have
\[
 \mathbb{E}_{x\in (\S^d)^n}\left(\sum_{i\neq j}f(x_i,x_j)\right)= 
\int_{x,y\in\S^d}\left(K(x,x)K(y,y)-|K(x,y)|^2\right)f(x,y)\,
d\mu (x)\,d\mu (y).
\]
\end{proposition}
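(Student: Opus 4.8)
The plan is to reduce the expectation to an integral against the second joint intensity $\rho_2$ and then evaluate $\rho_2$ using the determinantal structure. Because the trace of $K$ equals $n$, the process almost surely produces exactly $n$ points $x_1,\dots,x_n$, so $\sum_{i\neq j}f(x_i,x_j)$ is a finite sum over a simple configuration; the task is to show that its expectation equals $\int_{\S^d\times\S^d}f(x,y)\rho_2(x,y)\,d\mu(x)\,d\mu(y)$, after which substituting the $2\times 2$ determinant finishes the argument. (An alternative, more computational route would integrate the ordered density $\frac{1}{n!}\det(K(x_i,x_j))_{1\le i,j\le n}$ against $\sum_{i\neq j}f$ and peel off the variables $x_3,\dots,x_n$ via the idempotency reduction lemma for reproducing kernels; I prefer the joint-intensity route since the excerpt already sets it up.)

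First I would establish the identity for the building blocks $f=\mathbf{1}_{D_1\times D_2}$ with $D_1,D_2$ disjoint Borel sets. For such $f$ one has $\sum_{i\neq j}\mathbf{1}_{D_1}(x_i)\mathbf{1}_{D_2}(x_j)=\mathcal{X}(D_1)\mathcal{X}(D_2)$, since disjointness forces $i\neq j$ whenever $x_i\in D_1$ and $x_j\in D_2$. Taking expectations and invoking the defining relation of the joint intensities yields $\mathbb{E}[\sum_{i\neq j}f(x_i,x_j)]=\int_{D_1\times D_2}\rho_2(x,y)\,d\mu(x)\,d\mu(y)$, which is precisely the claimed identity for this $f$.

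Next I would extend the identity to an arbitrary nonnegative measurable $f$. By linearity it holds for finite nonnegative combinations of product indicators over disjoint cells, and by monotone convergence it passes to every measurable $f\geq 0$ approximated from below by such simple functions, obtained by partitioning $\S^d\times\S^d$ into small product cells. The delicate point is the diagonal: the definition of $\rho_2$ is stated only for disjoint sets, so when a product cell $D\times D$ straddles the diagonal one must argue separately. This is where simplicity of the process and the convention $\rho_2(x,x)=0$ (together with boundedness of $K$ on the diagonal) enter: the diagonal has product measure zero, the coincidence terms $x_i=x_j$ do not occur almost surely, and the contribution of the straddling cells vanishes as the mesh of the partition tends to $0$. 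I expect this measure-theoretic extension across the diagonal to be the main obstacle.

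Finally, with the identity in hand, it remains to evaluate $\rho_2$. By the determinantal hypothesis $\rho_2(x,y)=\det(K(x_i,x_j))_{1\le i,j\le 2}=K(x,x)K(y,y)-K(x,y)K(y,x)$, and since the integral operator is selfadjoint the kernel is Hermitian, so $K(y,x)=\overline{K(x,y)}$ and hence $K(x,y)K(y,x)=|K(x,y)|^2$. Substituting gives $\rho_2(x,y)=K(x,x)K(y,y)-|K(x,y)|^2$, and inserting this into the integral produces the stated formula.
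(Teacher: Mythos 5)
Your proposal is correct, but it follows a genuinely different route from the paper, whose entire proof is a citation: existence of the process and the almost-sure point count are attributed to the Macchi--Soshnikov theorem \cite[Theorem 4.5.5]{GAF}, and the expectation identity is quoted directly as \cite[Formula (1.2.2)]{GAF}. You instead derive that identity from the disjoint-sets definition of joint intensities stated in the introduction: the observation that $\sum_{i\neq j}\mathbf{1}_{D_1}(x_i)\mathbf{1}_{D_2}(x_j)=\mathcal{X}(D_1)\mathcal{X}(D_2)$ for disjoint $D_1,D_2$, the measure-theoretic extension to all nonnegative measurable $f$, and the evaluation $\rho_2(x,y)=K(x,x)K(y,y)-|K(x,y)|^2$ via Hermitian symmetry of the kernel. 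This is precisely the content hidden inside the paper's citation, so your argument makes the proposition self-contained where the paper outsources it; the price is the diagonal issue you correctly single out as the crux. Two remarks. First, for the extension step, pointwise approximation by simple functions built from product cells is awkward, since generic simple functions are not combinations of disjoint-sided rectangles; the clean formalization is to view both sides as finite Borel measures on $\S^d\times\S^d$ --- the second factorial moment measure $C\mapsto\mathbb{E}\bigl[\sum_{i\neq j}\mathbf{1}_C(x_i,x_j)\bigr]$ on the left, $\rho_2\,d\mu\,d\mu$ on the right --- note that they agree on the $\pi$-system of rectangles with disjoint sides (which generates the Borel $\sigma$-algebra off the diagonal) and that both assign zero mass to the diagonal (by simplicity of the process, respectively by $\mu\otimes\mu$-nullity), and then invoke uniqueness of measures followed by monotone convergence; this is your argument, packaged so that the ``straddling cells'' never need quantitative control. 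Second, your opening assertion that the process exists and produces exactly $n$ points almost surely is not established by your argument either: like the paper, it rests on \cite[Theorems 4.5.3 and 4.5.5]{GAF} (the eigenvalues of a projection kernel are $0$ and $1$, and the point count is a sum of independent Bernoulli variables with those parameters), so that half of the proposition remains citation-level in both treatments.
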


The fact that $K$ defines a determinantal point process in $\S^d$ is granted by Macchi-Soshnikov's theorem
\cite[Theorem 4.5.5]{GAF} and from \cite[Formula (1.2.2)]{GAF} the formula above 
follows.

We will be interested in the values of the Riesz $s$-energy and the 
logarithmic 
energy of points coming from the determinantal point process with 
a projection kernel $K$ so in particular we will use the following corollary.

\begin{corollary}\label{cor:rieszenergies}
The expected value of the Riesz $s$-energy and the logarithmic energy of $n$ 
points given by the determinantal point process associated to $K$ are given by:
 \begin{align*}
  \mathbb{E}_{x\in (\S^d)^n}(E_s(x))= 
&\int_{x,y\in\S^d}\frac{K(x,x)K(y,y)-|K(x,y)|^2}{|x-y|^s}\,d\mu 
(x)\,d\mu (y),\\
  \mathbb{E}_{x\in 
(\S^d)^n}(E_0(x))=&\int_{x,y\in\S^d}\left(K(x,x)K(y,y)-|K(x,y)|^2\right)\log 
|x-y|^{-1}\,d\mu (x)\,d\mu (y).
 \end{align*}
 In particular,  if we let $g(s)= 
\mathbb{E}_{x\in (\S^d)^n}(E_s(x))$ then $ \mathbb{E}_{x\in 
(\S^d)^n}(E_0(x))=g'(0)$.
\end{corollary}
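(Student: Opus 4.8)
The plan is to obtain both energy formulas as direct specializations of Proposition~\ref{th:determinantal} and then to deduce the differentiation identity by differentiating the resulting integral representation in $s$.

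For the Riesz $s$-energy I would simply take $f(x,y)=|x-y|^{-s}$ in Proposition~\ref{th:determinantal}. This $f$ is measurable and nonnegative on $\S^d\times\S^d$, and by definition $E_s(x)=\sum_{i\ne j}f(x_i,x_j)$, so the proposition applies verbatim and yields the first displayed formula. The diagonal $\{x=y\}$ contributes nothing, since the two-point correlation $w(x,y):=K(x,x)K(y,y)-|K(x,y)|^2$ vanishes there.

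The logarithmic case is the one requiring care, and this is where I expect the only real obstacle: the function $f(x,y)=\log|x-y|^{-1}$ is \emph{not} nonnegative, because $|x-y|$ ranges up to $2$ on $\S^d$, so Proposition~\ref{th:determinantal} as stated (for $f\ge0$) does not apply directly. I would therefore split $\log|x-y|^{-1}$ into its positive and negative parts, apply the proposition to each nonnegative piece, and subtract, the subtraction being legitimate once both pieces are shown to have finite integral. Finiteness rests on two observations about $w$: first, $w\ge0$ by the reproducing-kernel (Cauchy--Schwarz) inequality $|K(x,y)|^2\le K(x,x)K(y,y)$, and $\int_{\S^d}\int_{\S^d}w\,d\mu\,d\mu=n^2-\tr(K^2)=n(n-1)$ using the projection property $\tr(K^2)=\tr(K)=n$; second, $K(x,x)$ is bounded (the kernels in question are continuous on a compact sphere), so $w\le K(x,x)K(y,y)$ is bounded as well. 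The negative part is then controlled by $(\log|x-y|^{-1})_-\le\log2$, giving an integral at most $n(n-1)\log2$, while the positive part is finite because the logarithmic singularity on the diagonal is $\mu\otimes\mu$-integrable on $\S^d$. Subtracting the two nonnegative contributions yields the second formula.

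For the identity $\mathbb{E}(E_0)=g'(0)$ I would differentiate the integral representation of $g(s)$ in $s$. Since $\frac{d}{ds}|x-y|^{-s}=-\log|x-y|\,|x-y|^{-s}$, setting $s=0$ gives $g'(0)=\int_{\S^d}\int_{\S^d}w(x,y)\log|x-y|^{-1}\,d\mu\,d\mu$, which is precisely the logarithmic energy already computed. The one point to justify is the interchange of $\frac{d}{ds}$ with the integral; this follows from dominated convergence, since for small $s$ the difference quotients $(|x-y|^{-s}-1)/s$ are dominated by a fixed $w$-integrable function (bounded by $|\log|x-y||\,\max(1,|x-y|^{-s_0})$ for a fixed small $s_0>0$), the integrability being supplied by the same estimates on $w$ used above. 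This completes the plan.
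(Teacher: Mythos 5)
Your proposal is correct and follows the same route as the paper, which presents this corollary as a direct specialization of Proposition~\ref{th:determinantal} with $f(x,y)=|x-y|^{-s}$, respectively $f(x,y)=\log|x-y|^{-1}$, and treats the identity $\mathbb{E}(E_0)=g'(0)$ as immediate. The extra work you supply—splitting $\log|x-y|^{-1}$ into positive and negative parts (since Proposition~\ref{th:determinantal} is stated only for $f\geq 0$), checking integrability via $\int\!\!\int\bigl(K(x,x)K(y,y)-|K(x,y)|^2\bigr)d\mu\,d\mu=n(n-1)$, and justifying differentiation under the integral by dominated convergence—is exactly the justification the paper leaves implicit, and it is sound for the (continuous, bounded-diagonal) projection kernels considered there.
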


\subsection{Spherical harmonics}
For a classical introduction to shperical harmonics see for example \cite[Ch. IV]{Stein}. Given an integer $\ell\ge 0$, let $\mathcal{H}_\ell$ be the vector space of 
spherical harmonics of degree $\ell$, and let 
$h_\ell=\operatorname{dim}\mathcal{H}_\ell$. 
It is known that
\[
h_\ell=\frac{2\ell+d-1}{\ell+d-1}\binom{\ell+d-1}{\ell}= 
\frac{2}{\Gamma(d)}\ell^{d-1}+o(\ell^{d-1}).
\]
For the Hilbert space $L^2(\S^d)$ of square integrable functions in $\S^d$
with the inner product
\[
\langle f,g \rangle=\int_{\S^d} f(x)g(x)d\mu(x),\;\;\; f,g\in L^2(\S^d),
\]
one has that $L^2(\S^d)=\bigoplus_{\ell\ge 0} \mathcal{H}_\ell$ and therefore 
the expansion in an orthonormal basis of spherical harmonics provides a 
generalization of the Fourier series.

The Gegenbauer polynomials $C^\alpha_k(t)$ are the orthogonal polynomials in 
$[-1,1]$ with respect to the weight $(1-t^2)^{\alpha-\frac{1}{2}}.$ We assume 
the normalization $C^\alpha_k(1)=\binom{\alpha+k-1}{k}.$ 
Let $\{ Y_{\ell,k} 
\}_{k=1}^{h_\ell}$ be an orthonormal basis with respect to the norm in 
$L^2(\S^d)$. 
The reproducing kernel 
$Z_\ell(x,y)$ in $\mathcal{H}_\ell$ is then 
\[
Z_\ell(x,y)=\sum_{k=1}^{h_\ell} Y_{\ell,k}(x)Y_{\ell,k}(y)=\frac{
2\ell+d-1}{d-1}C^{\frac{d-1}{2}}_\ell(\langle x,y\rangle),\;\;x,y\in \S^d,
\]
where $\langle x, y\rangle$ is the scalar product in $\R^{d+1}$. 
Observe that $d(x,y)=\arccos \langle x,y \rangle$ is the geodesic distance in $\S^d.$
The function 
$Z_\ell$ is also known as the zonal harmonic of degree $\ell$.

We denote by $\Pi_L$ the vector space of spherical harmonics of degree at most 
$L$ in $\S^d$ (which equals the space of polynomials of degree at most $L$ in 
$\R^{d+1}$ restricted to $\S^d$). Its dimension is
\begin{equation}\label{eq:piL}
\dim \Pi_L=\pi_L=\frac{2L+d}{d}\binom{d+L-1}{L} = 
\frac{2}{\Gamma(d+1)}L^d+o(L^d).
\end{equation}
As the spaces $\mathcal{H}_\ell$ are mutually orthogonal one can see using the
Christoffel-Darboux formula that the reproducing kernel $K_L(x,y)$ of $\Pi_L$ is
\[
K_L(x,y)=\sum_{\ell=0}^{L}  Z_\ell(x,y)
=\frac{\pi_L}{\binom{L+\frac{d}{2}}{L}}P_L^{(1+\lambda,\lambda)}(\langle x,y 
\rangle),\;\;x,y\in \S^d,  
 \]
where $\lambda=\frac{d-2}{2}$ and the Jacobi polynomials 
$P_L^{(1+\lambda,\lambda)}(t)$ are normalized as 
\[
P_L^{(1+\lambda,\lambda)}(1)=\binom{L+\frac{d}{2}}{L}=\frac{\Gamma\left(L+\frac{
d}{2}+1\right)}{\Gamma(L+1)\Gamma\left(\frac{d}{2}+1\right)}.
\]

The following classical asymptotic estimate is a particular case from \cite[Theorem 
8.21.13]{Sze39}
\begin{equation}         				\label{Jacobiestimate} 
    P_{L}^{(1+\lambda,\lambda)}(\cos
    \theta)=\frac{k(\theta)}{\sqrt{L}}\left\{ \cos \left((L+\l+1)\theta+\gamma 
\right)+\frac{O(1)}{L\sin
    \theta}\right\},
\end{equation}
    if $c/L\le \theta \le \pi-(c/L)$, where $c$ is a fixed positive constant and
\[
k(\theta)=\pi^{-1/2} \left( \sin \frac{\theta}{2}\right)^{-\l-3/2}\left(\cos
    \frac{\theta}{2}\right)^{-\l-1/2},\quad
\gamma=-\left(\l+\frac{3}{2}\right)\frac{\pi}{2}.
\]

Near the end points, the asymptotic behavior of the Jacobi polynomials is 
given by the Mehler-Heine formulas
\begin{align}                                                    
\label{mehlerheine}                                               
    \lim_{L\to \infty} & 
L^{-1-\lambda}P_{L}^{(1+\lambda,\lambda)}\left(\cos\frac{z}{L}\right)
    =\left( \frac{z}{2}\right)^{-1-\lambda}J_{1+\lambda}(z),  \nonumber
\\
\lim_{L\to \infty} &  L^{-\lambda} P_{L}^{(1+\lambda,\lambda)} \left( 
\cos\left( \pi-\frac{z}{L}\right)\right)
    =\left( \frac{z}{2}\right)^{-\lambda}J_{\lambda}(z),
\end{align}
where the limits are uniform on compact subsets of $\C$, \cite[p.~192]{Sze39}.

By definition of reproducing kernel 
\[
P(x)=\langle P,K_L(\cdot, x) \rangle=\int_{\S^d} K_L(x,y) P(y)d\mu(y),
\] 
for $P\in \Pi_L.$ Observe that $K_L(x,x)=\pi_L$ for every $x\in\S^d$.

\begin{definition}
The harmonic ensemble is the determinantal point process in $\S^d$ with $\pi_L$ points a.s.
induced by the reproducing kernel $K_L(x,y)$. 
\end{definition}

\subsection{Linear statistics and spherical cap discrepancy}

Given a point process $\mathcal{X}$ on the sphere and a measurable function 
$\phi:\S^d\rightarrow \C,$ the corresponding linear statistic is the
random variable
\[
\mathcal{X}(\phi)=\int_{\S^d} \phi \, d\mathcal{X}.
\]
When $\phi=\chi_A$ is the characteristic function of $A\subset \S^d,$ we have that 
$\mathcal{X}(\chi_{A})$ is just the number of points of
the process $\mathcal{X}$ in $A$, also denoted as $\mathcal{X}(A)$ or $n_A.$ Characteristic 
functions define rough linear statistics, when $\phi$ is an smooth function we talk about smooth linear statistics.


A measure of the uniformity of the distribution of a finite set of points is the spherical cap discrepancy
defined by
\[
\mathbb{D}(x)=\sup_{A} \Bigl| \frac{1}{n}\sum_{i=1}^{n} \chi_{A}(x_i)- 
\mu(A)\Bigr|,
\]
where $x=(x_1, \dots, x_n)\in (\S^d)^n$ and $A$ runs on the spherical caps (i.e. balls with respect to the geodesic distance) of $\S^d$.

It is well known that a system 
of points $\{ x^{(n)} \}_n,$ where $x^{(n)}\in (\S^d)^{m_n},$ is asymptotically 
uniformly distributed if and only if $\lim_{n\to\infty} \mathbb{D}(x^{(n)})=0.$ 
Therefore, a measure of the similarity between the atomic measures 
$\frac{1}{m_n}\sum_{i=1}^{m_n} \delta_{x_i^{(n)}}$ and the Lebesgue surface measure, $\mu,$ is 
the speed of this convergence.

It was shown by Beck in \cite{Bec84} that there exists an $n$-point set in 
$\S^d$ with spherical cap discrepancy smaller than a constant times 
$n^{-\frac{1}{2}(1+\frac{1}{d})}\sqrt{\log n}.$ To prove this result Beck uses a 
random distribution of points and the proof
is therefore non-constructive. The known explicit constructions are still far from this bound, 
see \cite{LPS86,LPS87} and \cite{Aistleitneretal}
for the $\S^2$ case.
 The random configuration used by Beck consist in taking points uniformly in each set 
of a, so called, area-regular partition of the sphere, see \cite{KS98,RSZ94}. 
For other interesting properties of this random process see \cite{BSSW14}.

The upper bound above is almost optimal because, in \cite[p.~35]{Bec84}, Beck 
shows that the spherical cap discrepancy of an $n$ point set is bounded below by
(a constant times) $n^{-\frac{1}{2}(1+\frac{1}{d})}.$  

Following Beck \cite{Bec84}, see also \cite{ASZ14}, we will deduce information 
about the spherical cap discrepancy of a random set of points drawn from the 
harmonic ensemble by using the rough linear statistic defined above. 

\subsection{Separation distance} We discuss also another measure of how 
well distributed a collection $x=(x_1,\ldots,x_n)\in (\S^d)^n$ of spherical point is, 
namely, the separation distance
\[
 \sepdist(x)=\min_{i\neq j}\|x_i-x_j\|.
\]
A well distributed collection of points should have all of its points 
well-separated, so one can search for $x$ maximizing $\sepdist(x)$. This is a 
classical problem known as the hard spheres problem, the best packing problem or 
Tammes problem since \cite{Tammes}. In the $2$-dimensional case, a surprisingly 
sharp result \cite{Waerden} is known:
\[
\min_{x=(x_1,\ldots,x_n)\in(\S^2)^n}\sepdist(x) 
=\sqrt{\frac{8\pi}{\sqrt{3}}}n^{-1/2}+O(n^{-2/3}).
\]
For the $d$-dimensional case the precise value of the constant is unknown but 
we still have that the minimal separation distance $\min_{x=(x_1,\ldots,x_n)\in(\S^d)^n} \sepdist(x)$ is of order $n^{-1/d}.$
Collections of points minimizing the Riesz energy for $s=d-1$ have been proven 
to satisfy this bound with constant $2^{1/d}$, see \cite{DragnevSaff}. 
Following \cite{ASZ14} we will obtain a bound on the separation distance of 
points choosen at random from the harmonic ensemble.

\subsection{Notation} 
For two 
sequences $x_n$, $y_n$ of positive real numbers the expressions
\[
 x_n\lesssim y_n, \;\; x_n=O(y_n)\;\;\mbox{and}\;\; y_n=\Omega(x_n),
\]
all mean that there is a constant $C\geq0$ independent of $n$ such that
\[
\limsup_{n\rightarrow\infty}\frac{x_n}{y_n}\leq C.
\]
We sometimes write the inequality in the opposite order $x_n\gtrsim y_n$ (which 
means $y_n\lesssim x_n$). Finally, if both $x_n\lesssim y_n$ and $x_n\gtrsim 
y_n$ we simply write $x_n\sim y_n$. We also recall that $x_n=o(y_n)$ means that
\[
 \limsup_{n\rightarrow\infty}\frac{x_n}{y_n}=0.
\]

\subsection*{Acknowledgements}
We want to thank the two anonymous referees for their helpful comments and 
Lambert Gaulthier for spotting an inacuracy in a previous version of the 
manuscript.

\section{Main results}\label{sec:main}

\subsection{Riesz and logarithmic energies of the harmonic ensemble}

Our first result is the computation of the expected Riesz $s$-energy (in this 
section for $0<s<d$) of the determinantal process given by the reproducing 
kernel $K_L(x,y)$ of the space of polynomials of degree at most $L$ i.e. for 
points from the harmonic ensemble. The closed expression for the energy is given 
in terms of a generalized hypergeometric function.

Recall that for integer $p,q\ge 0$ and complex values $a_i,b_j$
the generalized hypergeometric function is defined by the power series
\begin{equation}	\label{eq:qFp}
\,{}_pF_q(a_1,\ldots,a_p;b_1,\ldots,b_q;z) = \sum_{n=0}^\infty 
\frac{(a_1)_n\dots(a_p)_n}{(b_1)_n\dots(b_q)_n} \, \frac {z^n} {n!},
 \end{equation}
where $(\cdot)_n$ is the rising factorial or Pochhammer symbol given by $(x)_0=1$ for $x\in\C$ and
\[
(x)_n=x(x+1)\cdots(x+n-2)(x+n-1)=\frac{\Gamma(x+n)}{\Gamma(x)},\quad n\geq1.
\]
(Note that the formula involving the Gamma function is not defined for integer $x\leq0$, but the finite product allways is.)

The continuous $s$-energy for the normalized Lebesgue measure is defined as
\begin{equation}  		\label{continuousrieszenergy}
V_s(\S^d)=\int_{\S^d} \int_{\S^d} \frac{1}{\|x-y\|^s}\, 
d\mu(x)\,d\mu(y)=2^{d-s-1}
\frac{\Gamma\left( \frac{d+1}{2} \right)\Gamma\left( \frac{d-s}{2} 
\right)}{\sqrt{\pi}\Gamma\left( d-\frac{s}{2} \right)}.
 \end{equation}
Then (recall that $\lambda=(d-2)/2$) one can write this quantity in terms of the beta 
function 
\begin{equation}\label{eq:Vs}
V_s(\S^d)=\frac{\omega_{d-1} 
2^{d-s-1}}{\omega_d}B\left(\lambda+1,\lambda+1-\frac{s}{2}\right),
\end{equation}
where 
\begin{equation}\label{eq:omegad}
\omega_d=2   \frac{\pi^{\frac{d+1}{2}}}{\Gamma\left(\frac{d+1}{2} \right)}
\end{equation}
is the surface area of $\S^d$. 

\begin{theorem}\label{th:rieszsharmonic}
 Let $x=(x_1,\ldots,x_n)\in (\S^d)^n,$ where $n=\pi_L,$ be $n$ points drawn from the harmonic ensemble. 
Then, $\mathbb{E}_{x\in(\S^d)^n}(E_s(x))$ is finite iff $s<d+2$. Moreover, for 
$0<s<d$,
\[
\begin{split}
\mathbb{E}_{x\in(\S^d)^n}(E_s(x))=n^2 V_s(\S^d)-\frac{ 2^{d-1-s}\omega_{d-1} 
n^2}{\binom{L+\frac{d}{2}}{L}^2 \omega_d}  
\frac{\Gamma\left( \frac{d-s}{2} \right) }
{ \Gamma\left( 1+\frac{d}{2} \right)\Gamma\left( 1+\frac{s}{2} \right)} 
\\
\times C_{s,d}(L) {}_{4}F_{3}\left( 
-L,d+L,\frac{d-s}{2},-\frac{s}{2};\frac{d}{2}+1,d-\frac{s}{2}+L,-\frac{s}{2}
-L;1 
 \right),
 \end{split}
 \]
where the constant
\[
C_{s,d}(L)=\frac{\Gamma\left(  L+\frac{d}{2} \right)\Gamma\left(  
L+\frac{d}{2}+1 \right) \Gamma\left( L+\frac{s}{2}+1 \right)}
{ \Gamma(L+1)^2 \Gamma\left( L-\frac{s}{2}+d \right)}\sim L^s,\;\;L\to \infty,
\]
and 
${}_{4}F_{3}$ is a generalized hypergeometric function.
\end{theorem}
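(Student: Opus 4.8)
The plan is to start from Corollary~\ref{cor:rieszenergies} and exploit the rotational invariance of $K_L$. Since $K_L(x,x)=\pi_L=n$ for every $x$, the first term $\int\int K_L(x,x)K_L(y,y)\|x-y\|^{-s}\,d\mu(x)\,d\mu(y)$ equals $n^2V_s(\S^d)$, which already produces the leading term of the statement. Everything then reduces to evaluating $\int\int |K_L(x,y)|^2\|x-y\|^{-s}\,d\mu(x)\,d\mu(y)$. Because the integrand depends on $x,y$ only through $t=\langle x,y\rangle$, I would fix $y$ at the north pole and apply the standard slicing formula $\int_{\S^d}G(\langle x,p\rangle)\,d\mu(x)=\frac{\omega_{d-1}}{\omega_d}\int_{-1}^1 G(t)(1-t^2)^{\lambda}\,dt$. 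Substituting $K_L=\frac{\pi_L}{\binom{L+\frac{d}{2}}{L}}P_L^{(1+\lambda,\lambda)}$ together with $\|x-y\|^s=(2(1-t))^{s/2}$, and rewriting $(1-t)^{-s/2}(1-t^2)^{\lambda}=(1-t)^{\lambda-s/2}(1+t)^{\lambda}$, the whole problem collapses to the one-dimensional Jacobi integral
\[
I=\int_{-1}^{1}\bigl(P_L^{(1+\lambda,\lambda)}(t)\bigr)^2(1-t)^{\lambda-s/2}(1+t)^{\lambda}\,dt.
\]

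To evaluate $I$ I would expand one of the two factors in its terminating hypergeometric form $P_L^{(1+\lambda,\lambda)}(t)=\binom{L+\frac{d}{2}}{L}\,{}_2F_1\bigl(-L,L+d;\frac{d}{2}+1;\frac{1-t}{2}\bigr)$, interchange sum and integral (legitimate for $s<d$; see below), and evaluate each resulting integral $\int_{-1}^1 P_L^{(1+\lambda,\lambda)}(t)(1-t)^{\rho}(1+t)^{\lambda}\,dt$ by the classical closed form for the integral of a single Jacobi polynomial against a power weight whose $(1+t)$-exponent matches the second parameter $\beta=\lambda$. Each term then becomes a ratio of Gamma functions in the summation index $m$; rewriting the ones with argument $-m$ via $\Gamma(A-m)=(-1)^m\Gamma(A)/(1-A)_m$ converts the sum into Pochhammer symbols. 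The signs cancel in pairs and the summand becomes exactly
\[
\frac{(-L)_m(L+d)_m\bigl(\tfrac{d-s}{2}\bigr)_m\bigl(-\tfrac{s}{2}\bigr)_m}{\bigl(\tfrac{d}{2}+1\bigr)_m\bigl(-L-\tfrac{s}{2}\bigr)_m\bigl(L+d-\tfrac{s}{2}\bigr)_m\,m!},
\]
i.e.\ the claimed ${}_4F_3$ at $z=1$, while the $m$-independent factors collapse — after using $\binom{L+\frac{d}{2}}{L}=\Gamma(L+\frac{d}{2}+1)/(L!\,\Gamma(\frac{d}{2}+1))$ — precisely into $C_{s,d}(L)$ times the stated $\Gamma$-prefactor. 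Reassembling with the factor $\frac{\omega_{d-1}}{\omega_d}$ and collecting the powers of $2$ (namely $2^{-s/2}\cdot 2^{d-1-s/2}=2^{d-1-s}$) yields the second term of the statement.

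For the finiteness claim I would argue separately, since the closed form above is only valid for $s<d$. Away from the diagonal the integrand is bounded, so the only issue is $t\to 1$. There $n^2-|K_L(t)|^2$ vanishes to first order (its $t$-derivative at $t=1$ is $2P_L^{(1+\lambda,\lambda)}(1)\,\frac{d}{dt}P_L^{(1+\lambda,\lambda)}(1)\neq 0$ for $L\ge 1$), so the integrand behaves like $(1-t)^{1+\lambda-s/2}$, which is integrable against the slicing measure exactly when $1+\lambda-s/2>-1$, i.e.\ $s<d+2$. The very same threshold $\Re\rho>-1$ with $\rho=\lambda-s/2$ (the $m=0$ term) is what restricts the term-by-term evaluation to $s<d$, explaining why the hypergeometric closed form is only asserted in that range. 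Finally, the asymptotic $C_{s,d}(L)\sim L^s$ is immediate from $\Gamma(L+a)/\Gamma(L+b)\sim L^{a-b}$, the net exponent being $\frac{d}{2}+(\frac{d}{2}+1)+(\frac{s}{2}+1)-1-1-(d-\frac{s}{2})=s$.

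The main obstacle I anticipate is the bookkeeping in the second step: correctly identifying the classical Jacobi-weight integral formula and its domain of validity, and then carefully converting the Gamma quotients with arguments $L+1-m+\frac{s}{2}$ and $1-m+\frac{s}{2}$ into the negative-parameter Pochhammer symbols $\bigl(-L-\tfrac{s}{2}\bigr)_m$ and $\bigl(-\tfrac{s}{2}\bigr)_m$, so that the summand matches the target ${}_4F_3$ exactly, including the cancellation of all $(-1)^m$ factors and the exact emergence of $C_{s,d}(L)$. Everything else is rotational invariance together with standard special-function asymptotics.
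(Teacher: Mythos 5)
Your proposal is correct and follows the same skeleton as the paper's proof: start from Corollary~\ref{cor:rieszenergies}, use rotational invariance to fix one point at the north pole, slice via Lemma~\ref{lem:computeintegralsphere} to reduce everything to the one-dimensional integral $\int_{-1}^1 P_L^{(1+\lambda,\lambda)}(t)^2(1-t)^{\lambda-s/2}(1+t)^{\lambda}\,dt$, and settle the finiteness threshold $s<d+2$ by a local expansion at the diagonal. The one genuine difference is how that integral is evaluated: the paper simply cites a table entry (Lemma~\ref{lem:integral2}, via \cite[p.~288]{EOT54}), whereas you derive it by expanding one factor as $P_L^{(1+\lambda,\lambda)}(t)=\binom{L+\frac{d}{2}}{L}\,{}_2F_1\bigl(-L,L+d;\tfrac{d}{2}+1;\tfrac{1-t}{2}\bigr)$, integrating the finite sum term by term against the classical formula for $\int_{-1}^1 P_L^{(1+\lambda,\lambda)}(t)(1-t)^{\rho}(1+t)^{\lambda}\,dt$, and resumming the Gamma quotients into the stated ${}_4F_3$; the bookkeeping checks out (the $(-1)^m$ factors from $\Gamma(A-m)=(-1)^m\Gamma(A)/(1-A)_m$ cancel in pairs, the powers of two combine as $2^{-s/2}\cdot 2^{d-1-s/2}=2^{d-1-s}$, and the $m$-independent Gamma factors collapse into $C_{s,d}(L)$ times the stated prefactor), so your route reproduces the paper's formula exactly. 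What this buys is self-containedness and, as a bonus, an explanation of why the closed form is asserted only for $s<d$: the $m=0$ term requires $\lambda-s/2>-1$, a restriction the paper leaves implicit in the validity range of the cited table entry. Your finiteness argument also differs slightly in execution --- you work directly on the sliced integral using the exact first-order vanishing of $n^2-|K_L(t)|^2$ at $t=1$ (i.e. $P_L'(1)\neq 0$), while the paper projects a small cap to $\R^d$ and invokes its Lemma~\ref{lemma:jacobibound} --- but both are the same Taylor-expansion-at-the-diagonal idea and yield the same threshold, including divergence for $s\geq d+2$ since the vanishing is exactly first order.
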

The expression in Theorem \ref{th:rieszsharmonic} does not directly give us an 
insight on the dependence of the expected value with respect to the number of 
points $n$, since $L$ and $n$ are related and appear in different places of the 
formula. In order to get the asymptotic expansion of the expected Riesz 
$s$-energy we show that the generalized hypergeometric function converges to a hypergeometric function 
and then we use Gauss's theorem. To prove this convergence we 
use classical estimates of the Jacobi polynomials to get 
Proposition~\ref{pr:integralestimate} which we think may be of independent 
interest. With this result we get the following asymptotic behavior, which 
clarifies the dependence on $n$ of the formula in Theorem 
\ref{th:rieszsharmonic}.

\begin{theorem}\label{th:asymptoticrieszsharmonic}
Let $x=(x_1,\ldots,x_n)\in (\S^d)^n,$ 
where $n=\pi_L,$ be $n$ points drawn from the harmonic ensemble. Then, for $0<s<d$,
 \[
  \mathbb{E}_{x\in(\S^d)^n}(E_s(x))=V_s(\S^d) n^2 -C_{s,d} 
n^{1+s/d}+o(n^{1+s/d}),
 \]
where 
\begin{equation}		\label{constanttheorem2}
C_{s,d}=2^{s-s/d}V_s(\S^d)\left(d!\right)^{-1+\frac{s}{d}}
\frac{ d\,\Gamma\left( 1+\frac{d}{2} \right)   \Gamma\left( \frac{1+s}{2} 
\right)\Gamma\left(d-\frac{s}{2}\right) }
{ \sqrt{\pi} \Gamma\left( 1+\frac{s}{2} \right) \Gamma\left( 1+\frac{s+d}{2} 
\right)}.
 \end{equation}
\end{theorem}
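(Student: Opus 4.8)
The plan is to start from the exact closed-form expression for $\mathbb{E}_{x\in(\S^d)^n}(E_s(x))$ given in Theorem~\ref{th:rieszsharmonic} and extract its asymptotic behavior as $L\to\infty$ (equivalently $n=\pi_L\to\infty$). The leading term $V_s(\S^d)n^2$ is already explicit, so the entire task reduces to analyzing the product
\[
\frac{2^{d-1-s}\omega_{d-1}n^2}{\binom{L+\frac{d}{2}}{L}^2\omega_d}\cdot
\frac{\Gamma\left(\frac{d-s}{2}\right)}{\Gamma\left(1+\frac{d}{2}\right)\Gamma\left(1+\frac{s}{2}\right)}\cdot C_{s,d}(L)\cdot{}_{4}F_{3}(\cdots;1)
\]
and showing it equals $C_{s,d}\,n^{1+s/d}+o(n^{1+s/d})$. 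First I would collect all the elementary factors. Using $n=\pi_L=\frac{2}{\Gamma(d+1)}L^d+o(L^d)$ from \eqref{eq:piL}, one has $n^2\sim \frac{4}{(d!)^2}L^{2d}$ and more precisely $n=\frac{2}{d!}L^d(1+o(1))$. The binomial $\binom{L+\frac{d}{2}}{L}=\frac{\Gamma(L+\frac{d}{2}+1)}{\Gamma(L+1)\Gamma(\frac{d}{2}+1)}\sim \frac{L^{d/2}}{\Gamma(\frac{d}{2}+1)}$ by Stirling, so $\binom{L+\frac{d}{2}}{L}^{-2}\sim \Gamma(\frac{d}{2}+1)^2 L^{-d}$. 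Combined with the stated asymptotic $C_{s,d}(L)\sim L^s$, the prefactor (excluding the ${}_4F_3$) is of order $L^{2d}\cdot L^{-d}\cdot L^{s}=L^{d+s}$, which matches $n^{1+s/d}\sim (\frac{2}{d!})^{1+s/d}L^{d+s}$ as required. The main point is then to show that the ${}_4F_3$ evaluated at $1$ converges to a finite, explicit constant.

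The crux is the limit of the hypergeometric function. The plan is to show
\[
\lim_{L\to\infty}{}_{4}F_{3}\left(-L,d+L,\tfrac{d-s}{2},-\tfrac{s}{2};\tfrac{d}{2}+1,d-\tfrac{s}{2}+L,-\tfrac{s}{2}-L;1\right)={}_{2}F_{1}\left(\tfrac{d-s}{2},-\tfrac{s}{2};\tfrac{d}{2}+1;1\right),
\]
the heuristic being that in each series coefficient the three parameters involving $L$ pair off: as $L\to\infty$ the ratio $\frac{(-L)_k(d+L)_k}{(d-\frac{s}{2}+L)_k(-\frac{s}{2}-L)_k}\to 1$ for each fixed $k$, collapsing the ${}_4F_3$ termwise into the ${}_2F_1$ with the surviving parameters $\frac{d-s}{2},-\frac{s}{2}$ up top and $\frac{d}{2}+1$ below. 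Justifying the interchange of the limit with the infinite sum is where I expect the real work to lie, and this is presumably exactly what Proposition~\ref{pr:integralestimate} is for: the paper explicitly says it uses that proposition, built on the Jacobi asymptotics \eqref{Jacobiestimate} and the Mehler--Heine formulas \eqref{mehlerheine}, to prove that the generalized hypergeometric function converges to a hypergeometric function. So rather than manipulating the ${}_4F_3$ series coefficient-by-coefficient with a dominated-convergence argument, I would likely go back to the integral representation of the energy underlying Theorem~\ref{th:rieszsharmonic} (an integral of $|x-y|^{-s}$ against $K_L(x,x)K_L(y,y)-|K_L(x,y)|^2$, i.e.\ against $\pi_L^2-|K_L|^2$), rescale near the diagonal by $\theta\approx z/L$, and use Proposition~\ref{pr:integralestimate} to pass to the limit in the rescaled integral. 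The local repulsion encoded in $|K_L(x,y)|^2$ contributes only at geodesic distances $\sim 1/L$, which is precisely what produces the correction of order $L^{d+s}$.

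Once the ${}_2F_1(1)$ value is in hand, I would apply Gauss's theorem,
\[
{}_{2}F_{1}(a,b;c;1)=\frac{\Gamma(c)\Gamma(c-a-b)}{\Gamma(c-a)\Gamma(c-b)},
\]
valid here because $\Re(c-a-b)=\frac{d}{2}+1-\frac{d-s}{2}+\frac{s}{2}=1+s>0$. With $a=\frac{d-s}{2}$, $b=-\frac{s}{2}$, $c=\frac{d}{2}+1$ this gives a closed ratio of Gamma functions. The final step is purely bookkeeping: assemble the elementary prefactor asymptotics, the Gauss-evaluated limit, and the formula \eqref{eq:Vs} (or \eqref{continuousrieszenergy}) for $V_s(\S^d)$, then simplify via the duplication formula and standard Gamma identities to recognize the resulting constant as exactly $C_{s,d}$ in \eqref{constanttheorem2}; the factors $2^{s-s/d}$ and $(d!)^{-1+s/d}$ arise from the $2^{d-1-s}$ weight together with the $(\frac{2}{d!})^{1+s/d}L^{d+s}$ normalization of $n^{1+s/d}$. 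The error term $o(n^{1+s/d})$ comes from collecting the $o(1)$ remainders in Stirling's formula, in $C_{s,d}(L)\sim L^s$, and in the hypergeometric convergence. The principal obstacle throughout is controlling the convergence of the ${}_4F_3(1)$ uniformly enough to justify the termwise (or integral) limit, which is why the Jacobi and Mehler--Heine estimates are essential.
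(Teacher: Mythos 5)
Your proposal is correct and takes essentially the same route as the paper: reduce to the weighted integral of $P_L^{(1+\lambda,\lambda)}(t)^2$, invoke Proposition~\ref{pr:integralestimate} (Mehler--Heine rescaling $t=\cos(x/L)$ near $t=1$ plus the classical Jacobi estimate elsewhere) to pass to the limit, evaluate the limiting constant by Gauss's theorem, and assemble the Stirling prefactors into $C_{s,d}$. The only cosmetic difference is that the paper works directly with the integral form rather than first restating the problem as the ${}_4F_3\to{}_2F_1$ limit, with Gauss's theorem absorbed into the final equality of the Proposition instead of applied separately.
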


The correct order of growth for the second term of the minimal Riesz $s$-energy 
is known (see \cite{KS98}) i.e.
for $d\ge 2$ and $0<s<d$ there exist constants $C,c>0$ such that
\begin{equation}		\label{knownboundsenergy}
-cn^{1+s/d}\le \mathcal{E}(s,n)-V_s(\S^d) n^2 
\le -Cn^{1+s/d}, 
\end{equation}

for $n\ge 2$.

It has been conjectured in (\cite[Conjecture 3]{BHS12}) that there is a 
constant 
$A_{s,d}$ such that
\[
\mathcal{E}(s,n)=V_s(\S^d) n^2 
+\frac{A_{s,d}}{\omega_d^{s/d}}  n^{1+s/d} +o(n^{1+s/d}).
\]
Furthermore, when $d=2,4,8,24$
\begin{equation}		\label{constantconjectured}
A_{s,d}=|\Lambda_d|^{s/d} \zeta_{\Lambda_d}(s), 
\end{equation}
where $|\Lambda_d|$ stands for the co-volume and $\zeta_{\Lambda_d}(s)$ for the 
Epstein zeta function of the lattice $\Lambda_d.$ Here $\Lambda_d$ denotes the 
hexagonal 
lattice for $d=2$, the root lattices $D_4$ for $d=4$ and $E_8$ for $d=8$ and 
the Leech lattice for $d=24$. 

In the particular case of $d=2$ the conjecture reduces to
\[
\mathcal{E}(s,n)=V_s(\S^2) n^2+ 
 \frac{(\sqrt{3}/2)^{s/2} \zeta_{\Lambda_2}(s)}{(4\pi)^{s/2}} n^{1+s/2} 
+o(n^{1+s/2}),
\]
where $\zeta_{\Lambda_2}(s)$ is the zeta function of the hexagonal lattice. 
This zeta function can be evaluated by using its relation with a 
particular Dirichlet L-series, see \cite{BHS12}.

When $n$ is of the form $\pi_L,$ since $\mathcal{E}(s,n)\le \mathbb{E} E_s(n)$, 
we get from Theorem~\ref{th:asymptoticrieszsharmonic} an upper bound for the 
minimal energy, for $d\ge 2$ and $0<s<d$:
\[
\mathcal{E}(s,n)-V_s(\S^d) n^2 \le -C_{s,d} n^{1+s/d}+o(n^{1+s/d}),
\] 
for $C_{s,d}$ as in \eqref{constanttheorem2}.


For $d=2$ this bound is a bit worse ($C_{s,2}$ is smaller) than the lower 
constant $2^{-s}\Gamma(1-\frac{s}{2})$ from \cite[Corollary 1.4]{ASZ14} given 
by the spherical ensemble, see figure \ref{fig:constantesd2}.

\begin{figure}
    \begin{center}
           \includegraphics[width=0.9\textwidth]{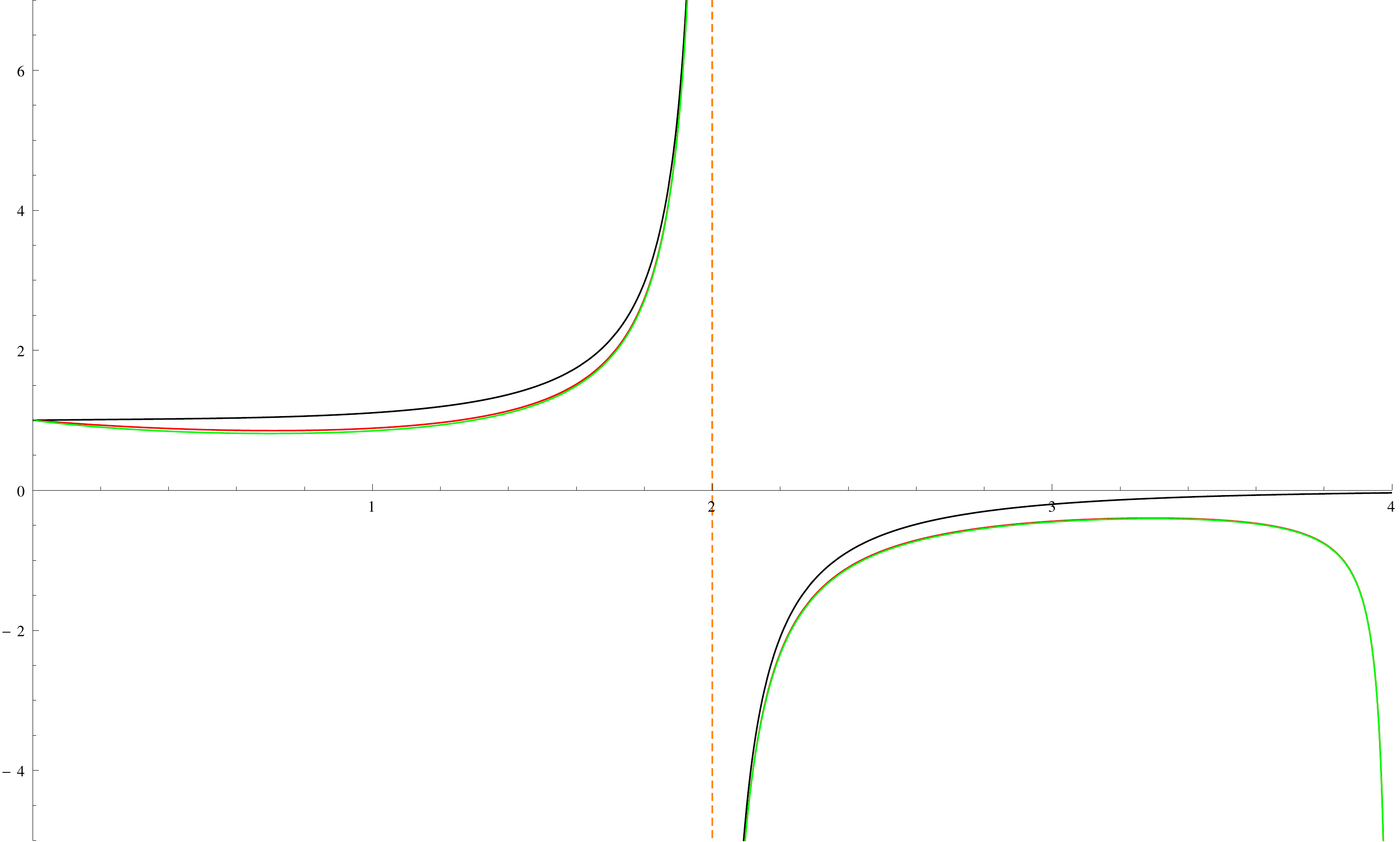}
          \caption{Graphic of $-\frac{(\sqrt{3}/2)^{s/2} 
\zeta_{\Lambda_2}(s)}{(4\pi)^{s/2}}$ in black, $2^{-s}\Gamma(1-\frac{s}{2})$
           in red and the constant $C_{s,2}$ of \eqref{constanttheorem2} in 
green (case $d=2$). Note that Theorem \ref{th:asymptoticrieszsharmonic} only 
involves this constant for $s<2$. We plot the constants in a larger interval for 
illustration purposes.}\label{fig:constantesd2}
    \end{center}
\end{figure}

For larger $d,$ according to \cite[Theorem 3.8.2]{Leopardi_thesis} and 
\cite{RSZ94}, the known upper bound, $C$ in \eqref{knownboundsenergy},
equals $1/Q^s$ where $Q$ is the minimal constant such that
one can construct an area-regular partition $\{D_j \}$ of $\S^d$ 
with diameter $\operatorname{diam}(D_j)\le Q n^{-1/d}.$ 
Observe that the area of the spherical cap of (small) radius $r$ is essentially 
equal to
$\frac{r^d}{d}\omega_{d-1}$ and therefore the radius of a spherical cap of 
area 
$\omega_d/n$ is approximately
$(d \omega_d/\omega_{d-1})^{1/d} n^{-1/d}$. This implies, together with the 
fact 
that the spherical cap has the smallest diameter among the 
sets with the same area, that $Q\ge 2(d \omega_d/\omega_{d-1})^{1/d}$. In fact, 
it is known how to construct 
area-regular partitions with diameter $8(d \omega_d/\omega_{d-1})^{1/d}$ (for 
$d=2$ one can get better constants but always with $Q\ge 4$). These constants
are worse than the constant given by Theorem \ref{th:asymptoticrieszsharmonic}.

It has been recently shown, \cite{PS2015}, that in the case of $\R^d$ and 
$d-2<s<d$ the second term of the minimal energy has indeed the form $B_{s,d} 
n^{1+s/d}$ for a constant $B_{s,d}.$

In the range $d-1<s<d$ we get the following result.

\begin{corollary}\label{cor:allpoints1}
  For any $n\ge 1$ (not necessarily of the form $\pi_L$) for $d-1<s<d$ we have 
that
\[
  \mathcal{E}(s,n)\le V_s(\S^d) n^2 -C_{s,d} 
n^{1+s/d}+o(n^{1+s/d}),
 \]
where $C_{s,d}$ is the constant in \eqref{constanttheorem2}.
\end{corollary}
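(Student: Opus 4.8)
The plan is to reduce the general statement to the subsequence $n=\pi_L$ already handled by Theorem~\ref{th:asymptoticrieszsharmonic}, exploiting the elementary monotonicity of $E_s$ under deletion of points. Given an arbitrary $n\ge 1$, I would choose the unique integer $L$ with $\pi_{L-1}<n\le\pi_L$. Pick a configuration $x^{\ast}\in(\Sd)^{\pi_L}$ with $E_s(x^{\ast})=\mathcal{E}(s,\pi_L)$ (a minimiser exists since $E_s$ is lower semicontinuous and blows up on the diagonal). Deleting any $\pi_L-n$ of its points yields an $n$-point set whose energy is at most $E_s(x^{\ast})$, because every summand in $E_s$ is nonnegative. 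Hence, applying Theorem~\ref{th:asymptoticrieszsharmonic} at the admissible value $\pi_L$ (and noting $L\to\infty$ as $n\to\infty$),
\[
\mathcal{E}(s,n)\le\mathcal{E}(s,\pi_L)\le V_s(\Sd)\,\pi_L^2-C_{s,d}\,\pi_L^{1+s/d}+o\!\left(\pi_L^{1+s/d}\right).
\]
It then only remains to replace $\pi_L$ by $n$ on the right-hand side at the cost of an $o(n^{1+s/d})$ error.

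For this I would quantify the gap between consecutive dimensions. Since $\pi_L-\pi_{L-1}=h_L\sim\tfrac{2}{\Gamma(d)}L^{d-1}$ while $n\sim\pi_L\sim\tfrac{2}{\Gamma(d+1)}L^{d}$, one has $L\sim(\Gamma(d+1)n/2)^{1/d}$ and therefore
\[
0\le\pi_L-n\le\pi_L-\pi_{L-1}=h_L\lesssim n^{1-1/d}.
\]
Consequently $\pi_L=n\bigl(1+O(n^{-1/d})\bigr)$, which gives $\pi_L^{1+s/d}=n^{1+s/d}\bigl(1+O(n^{-1/d})\bigr)=n^{1+s/d}+o(n^{1+s/d})$ and, expanding the square, $\pi_L^2-n^2=(\pi_L-n)(\pi_L+n)=O(n^{2-1/d})$. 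Substituting these turns the second term into $-C_{s,d}n^{1+s/d}+o(n^{1+s/d})$ and leaves a single genuinely new error term, namely $V_s(\Sd)\,(\pi_L^2-n^2)=O(n^{2-1/d})$.

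The main (and essentially only) obstacle is to absorb this quadratic error into the $o(n^{1+s/d})$ remainder, and this is exactly where the hypothesis $s>d-1$ enters: $O(n^{2-1/d})=o(n^{1+s/d})$ holds if and only if $2-\tfrac1d<1+\tfrac sd$, i.e.\ $s>d-1$. Thus the restriction $d-1<s<d$ is not an artifact but precisely the range in which the perturbation of the dominant term $V_s(\Sd)n^2$ caused by rounding $n$ up to $\pi_L$ is negligible next to the second-order term. Assembling the estimates yields
\[
\mathcal{E}(s,n)\le V_s(\Sd)\,n^2-C_{s,d}\,n^{1+s/d}+o\!\left(n^{1+s/d}\right),
\]
as claimed.

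Finally, I would remark that the threshold $d-1$ is tied to this particular deletion argument: one could instead adjoin $n-\pi_{L-1}$ independent uniform points to a minimiser on $\pi_{L-1}$ points, estimating the expected added energy as $V_s(\Sd)$ per new interaction by rotational invariance. Since there $n\ge\pi_{L-1}$ the leading term $V_s(\Sd)n^2$ is reproduced exactly and the only error comes from the second-order term (of size $O(n^{1+(s-1)/d})=o(n^{1+s/d})$), which would extend the bound to the full range $0<s<d$. The deletion argument above is shorter, however, and suffices for the stated corollary.
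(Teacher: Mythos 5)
Your proof is correct and takes essentially the same route as the paper: bound $\mathcal{E}(s,n)$ by $\mathcal{E}(s,\pi_L)$ for the smallest admissible $\pi_L\ge n$ (monotonicity under deletion of points), invoke Theorem~\ref{th:asymptoticrieszsharmonic} at $\pi_L$, and absorb the resulting error $V_s(\S^d)(\pi_L^2-n^2)=O(n^{2-1/d})$ into $o(n^{1+s/d})$, which is possible precisely because $s>d-1$. The only differences are cosmetic ones in your favor: you make explicit the deletion argument and the gap estimate $\pi_L-n\le h_L\lesssim n^{1-1/d}$, both of which the paper leaves implicit.
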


Indeed, this Corollary follows easily from the fact that for $n\in 
(\pi_L,\pi_{L+1})$ 
\[\mathcal{E}(s,n)\le \mathcal{E}(s,\pi_{L+1})\le 
\mathbb{E}_{x\in(\S^d)^{\pi_{L+1}}}(E_s(x)),\]
and both $\mathbb{E}_{x\in(\S^d)^{\pi_L}} (E_s(x))$ and $\mathbb{E}_{x\in(\S^d)^{\pi_{L+1}}}(E_s(x))$
have the same two first asymptotic terms. More precisely, if
$\pi_L=A_d L^d+O(L^{d-1}),$ also $\pi_{L+1}=A_d L^d+O(L^{d-1}),$ and
\begin{align*}
 \mathbb{E}_{x\in(\S^d)^{\pi_{L+1}}}(E_s(x)) & =V_s(\S^d) A^2_d L^{2d}+O(L^{2d-1})-C_{s,d} 
A_d^{1+s/d}L^{d+s}+o(L^{d+s})
 \\
 &
=V_s(\S^d) A^2_d L^{2d}-C_{s,d} A_d^{1+s/d}L^{d+s}+o(L^{d+s})
\\
 &
=V_s(\S^d) n^2-C_{s,d} n^{1+s/d}+o(n^{1+s/d}),
\end{align*}
when $d-1<s$ and $n\in (\pi_L,\pi_{L+1}).$ 

For the logarithmic potential the continuous energy is
\[
V_{\rm log}(\S^d)=\int_{\S^d} \int_{\S^d} \log\frac{1}{\|x-y\|} 
d\mu(x)\,d\mu(y)=\frac{1}{2}(\psi_0(d)-\psi_0(d/2))-\log 2,
\]
where $\psi_0=(\log\Gamma)'$ is the digamma function.
Note that  we have
\begin{align}\label{eq:VlogVs}
 V_{\rm log}(\S^d)=\frac{d}{ds}\Bigr|_{s=0}\left(V_s(\S^d)\right).
\end{align}
In the 
computation of the derivative of generalized hypergeometric function in Theorem 
\ref{th:rieszsharmonic} most of the terms 
vanish, and we get a closed expression for the expected energy.

\begin{theorem}\label{th:logharmonic}
 Let $x=(x_1,\ldots,x_n)\in (\S^d)^n,$ where $n=\pi_L,$ be $n$ points drawn from the harmonic ensemble. 
Then,
\[
\mathbb{E}_{x\in (\S^d)^n}( E_0(x))=n^2 V_{\rm log}(\S^d) -\frac{n}{2}
\left(\sum_{k=1}^L 
\frac{1}{\frac{d}{2}+k}+H_{L+d-1}+\psi_0(1/2)-\psi_0(d/2)\right),  
\]
where $H_k=\sum_{j=1}^k \frac{1}{j}$ stands for the $j$th harmonic number.
\end{theorem}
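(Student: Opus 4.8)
The plan is to use the identity $\mathbb{E}_{x\in(\S^d)^n}(E_0(x))=g'(0)$ from Corollary~\ref{cor:rieszenergies}, where $g(s)=\mathbb{E}_{x\in(\S^d)^n}(E_s(x))$ is given in closed form by Theorem~\ref{th:rieszsharmonic} for $0<s<d$. Every factor appearing there (the Gamma functions, the power $2^{d-1-s}$, the constant $C_{s,d}(L)$, and the \emph{terminating} ${}_{4}F_{3}$) is real-analytic in $s$ on a neighbourhood of $0$, so $g$ extends analytically past $s=0$ and $g'(0)$ is simply the $s$-derivative of the closed expression. The summand $n^2V_s(\S^d)$ contributes $n^2V_{\mathrm{log}}(\S^d)$ by \eqref{eq:VlogVs}, which is exactly the leading term of the claim; the whole content is therefore $-B'(0)$, where $B(s)=A(s)F(s)$ is the second summand, $F(s)$ is the ${}_{4}F_{3}$, and $A(s)$ is its explicit prefactor.

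The reason most terms vanish is the upper parameter $-s/2$. Writing the terminating series as
\[
F(s)=\sum_{k=0}^{L}\frac{(-L)_k\,(d+L)_k\,(\tfrac{d-s}{2})_k\,(-\tfrac{s}{2})_k}{(\tfrac{d}{2}+1)_k\,(d-\tfrac{s}{2}+L)_k\,(-\tfrac{s}{2}-L)_k\,k!},
\]
the Pochhammer symbol $(-\tfrac{s}{2})_k$ has a simple zero at $s=0$ for every $k\ge1$, so $F(0)=1$ and, upon differentiating the $k$-th term, only the factor coming from $\frac{d}{ds}(-\tfrac{s}{2})_k\big|_{s=0}=-\tfrac12(k-1)!$ survives, all remaining factors being evaluated at $s=0$. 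After cancelling $(-L)_k$ against $(-\tfrac{s}{2}-L)_k\big|_{s=0}$ and $(d+L)_k$ against $(d-\tfrac{s}{2}+L)_k\big|_{s=0}$, and using $\frac{(\frac d2)_k}{(\frac d2+1)_k}=\frac{d/2}{d/2+k}$ together with the partial fraction $\frac{d/2}{k(d/2+k)}=\frac1k-\frac1{d/2+k}$, the $k$-th derivative collapses to $-\tfrac12\bigl(\tfrac1k-\tfrac1{d/2+k}\bigr)$. Summing over $1\le k\le L$ gives $F'(0)=-\tfrac12 H_L+\tfrac12\sum_{k=1}^{L}\frac{1}{\frac{d}{2}+k}$.

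For the prefactor I would first check the clean identity $A(0)=n$: evaluating the Gamma quotient together with $C_{0,d}(L)/\binom{L+\frac d2}{L}^2$ and applying the Legendre duplication formula $\Gamma(\tfrac d2)\Gamma(\tfrac{d+1}{2})=2^{1-d}\sqrt{\pi}\,\Gamma(d)$ reduces $A(0)$ to $\frac{d!}{2}\,n^2\,\frac{L!}{(L+\frac d2)(L+d-1)!}$, and the definition $n=\pi_L=\frac{2L+d}{d}\binom{d+L-1}{L}$ makes this exactly $n$. Then $B'(0)=A'(0)F(0)+A(0)F'(0)=A'(0)+nF'(0)$, and $A'(0)=n\,(\log A)'(0)$ follows by logarithmic differentiation, the only $s$-dependent factors being $2^{d-1-s}$, $\Gamma(\tfrac{d-s}{2})$, $\Gamma(1+\tfrac s2)$, $\Gamma(L+\tfrac s2+1)$ and $\Gamma(L-\tfrac s2+d)$, so that
\[
(\log A)'(0)=-\log2-\tfrac12\psi_0(\tfrac d2)-\tfrac12\psi_0(1)+\tfrac12\psi_0(L+1)+\tfrac12\psi_0(L+d).
\]

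Finally I would assemble $\mathbb{E}(E_0)=n^2V_{\mathrm{log}}(\S^d)-A'(0)-nF'(0)$ and convert digammas to harmonic numbers via $\psi_0(m)=-\gamma+H_{m-1}$ for positive integers $m$, together with the special value $\psi_0(\tfrac12)=-\gamma-2\log2$. The $\tfrac n2 H_L$ produced by $-nF'(0)$ cancels the $-\tfrac n2 H_L$ hidden in $\psi_0(L+1)$ inside $-A'(0)$; the two copies of Euler's constant combine with $n\log2$ to reconstruct $-\tfrac n2\psi_0(\tfrac12)$; and $\psi_0(L+d)$ supplies $-\tfrac n2 H_{L+d-1}$. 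What remains is precisely $-\frac{n}{2}\bigl(\sum_{k=1}^{L}\frac{1}{\frac{d}{2}+k}+H_{L+d-1}+\psi_0(1/2)-\psi_0(d/2)\bigr)$, as claimed. The main obstacle is the bookkeeping of the prefactor derivative and the verification that $A(0)=n$; once these are in hand, the vanishing of the higher ${}_{4}F_{3}$ terms renders the hypergeometric contribution transparent.
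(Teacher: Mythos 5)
Your proposal is correct and follows essentially the same route as the paper: both differentiate the closed form of Theorem~\ref{th:rieszsharmonic} at $s=0$, exploit the simple zero of $(-s/2)_k$ to collapse the ${}_4F_3$ derivative to $-\tfrac12\sum_{k=1}^L\bigl(\tfrac1k-\tfrac1{d/2+k}\bigr)$, and use Legendre duplication to identify the prefactor at $s=0$ with $n=\pi_L$. The only (cosmetic) difference is that you handle the prefactor by logarithmic differentiation of a single function $A(s)$ after verifying $A(0)=n$, whereas the paper factors it as $C_L D_L(s)E_L(s)$ and applies the product rule to the pieces.
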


\begin{corollary}\label{cor:logharmonic}
 Let $x=(x_1,\ldots,x_n)\in (\S^d)^n,$ where $n=\pi_L,$ be $n$ points drawn from the harmonic ensemble. 
Then,
\[
\mathbb{E}_{x\in (\S^d)^n}( E_0(x))
=n^2 V_{\rm log}(\S^d)
-\frac{1}{d}n \log n +C_d n+o(n),
\]
where
\[
 C_d=\frac{1}{d}\log \frac{2}{d!}+\log 2+\psi_0\left( \frac{d}{2} 
\right)+\frac{1}{d}.
\]
In particular, for $d=2$, we have $C_2=1/2+\log 2-\gamma\approx0.6159.$.., 
where $\gamma$ is the Euler-Mascheroni constant.
\end{corollary}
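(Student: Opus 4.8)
The plan is to derive Corollary~\ref{cor:logharmonic} directly from the closed expression in Theorem~\ref{th:logharmonic} by asymptotically expanding each term for large $L$ and translating from $L$ to $n=\pi_L$. The main identity to exploit is \eqref{eq:piL}, namely $\pi_L=\frac{2}{\Gamma(d+1)}L^d+o(L^d)=\frac{2}{d!}L^d+o(L^d)$, which on inversion gives $L=\left(\frac{d!}{2}\right)^{1/d}n^{1/d}(1+o(1))$ and hence $\log L=\frac{1}{d}\log n+\frac{1}{d}\log\frac{d!}{2}+o(1)$. Since the leading $n^2V_{\rm log}(\S^d)$ term is already exact, the entire content lies in expanding the bracketed correction $\frac{n}{2}\bigl(\sum_{k=1}^L\frac{1}{\frac{d}{2}+k}+H_{L+d-1}+\psi_0(1/2)-\psi_0(d/2)\bigr)$ and showing it equals $\frac{1}{d}n\log n-C_dn+o(n)$.

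First I would handle the two $L$-dependent sums using the standard harmonic-number asymptotics. For the harmonic number one has $H_{L+d-1}=\log(L+d-1)+\gamma+o(1)=\log L+\gamma+o(1)$, since $d$ is fixed. For the shifted sum I would write $\sum_{k=1}^L\frac{1}{\frac{d}{2}+k}=\psi_0\!\left(L+\frac{d}{2}+1\right)-\psi_0\!\left(\frac{d}{2}+1\right)$ using the telescoping identity $\psi_0(x+1)-\psi_0(x)=1/x$, and then apply $\psi_0(x)=\log x+O(1/x)$ to get $\psi_0\!\left(L+\frac{d}{2}+1\right)=\log L+o(1)$. Thus the two $L$-growing terms together contribute $2\log L+\gamma-\psi_0\!\left(\frac{d}{2}+1\right)+o(1)$ to the bracket, while $\psi_0(1/2)-\psi_0(d/2)$ are constants. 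Collecting the constant pieces I would use $\psi_0(1/2)=-\gamma-2\log 2$ and the recurrence $\psi_0\!\left(\frac{d}{2}+1\right)=\psi_0\!\left(\frac{d}{2}\right)+\frac{2}{d}$ to cancel the $\gamma$'s and reorganize everything in terms of $\psi_0(d/2)$.

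Putting the bracket into the form $2\log L+(\text{constant})+o(1)$ and substituting $\log L=\frac{1}{d}\log n+\frac{1}{d}\log\frac{d!}{2}+o(1)$, I would multiply by $\frac{n}{2}$. The $2\log L$ piece yields $\frac{n}{2}\cdot\frac{2}{d}\log n=\frac{1}{d}n\log n$ for the dominant correction, matching the claimed $-\frac{1}{d}n\log n$ once the overall minus sign in Theorem~\ref{th:logharmonic} is accounted for. The remaining constant multiple of $\frac{n}{2}$, combined with the $\frac{n}{2}\cdot\frac{2}{d}\log\frac{d!}{2}=\frac{1}{d}n\log\frac{d!}{2}$ contribution, should assemble into exactly $-C_dn$ with $C_d=\frac{1}{d}\log\frac{2}{d!}+\log 2+\psi_0\!\left(\frac{d}{2}\right)+\frac{1}{d}$; note the sign flip between $\log\frac{d!}{2}$ and $\log\frac{2}{d!}$ comes from the global negative sign. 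All discarded pieces are genuinely $o(1)$ inside the bracket, hence $o(n)$ after multiplication, so the error term is controlled.

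The main obstacle is purely bookkeeping: correctly tracking the constant terms through the digamma recurrences and the substitution for $\log L$, since a single sign error or a misplaced factor of $\frac{2}{d}$ would corrupt $C_d$. I would verify the final constant by specializing to $d=2$, where $\psi_0(1)=-\gamma$, so $C_2=\frac{1}{2}\log\frac{2}{2}+\log 2+\psi_0(1)+\frac{1}{2}=\frac{1}{2}+\log 2-\gamma$, reproducing the stated numerical value $\approx 0.6159$ and confirming the algebra. The only analytic input beyond elementary asymptotics is the uniformity of the $o(1)$ remainders in $L$, which is immediate since $d$ is a fixed constant throughout.
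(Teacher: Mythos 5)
Your proposal is correct and follows essentially the same route as the paper: both start from Theorem~\ref{th:logharmonic}, telescope the shifted sum into digamma values, apply the standard asymptotics $H_{L+d-1}=\log L+\gamma+o(1)$ and $\psi_0(L+\tfrac{d}{2}+1)=\log L+o(1)$ together with $\psi_0(1/2)=-\gamma-2\log 2$, and then convert $\log L$ to $\tfrac{1}{d}\log n+\tfrac{1}{d}\log\tfrac{d!}{2}+o(1)$ via \eqref{eq:piL}. The only cosmetic difference is that you anchor the telescoping at $\psi_0(\tfrac{d}{2}+1)$ and shift to $\psi_0(\tfrac{d}{2})$ afterwards, while the paper absorbs the $\tfrac{2}{d}$ shift directly; the bookkeeping and the resulting constant $C_d$ agree.
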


For the logarithmic case it is known that
\[
\mathcal{E}(0,n)=V_{\rm log}(\S^d) n^2 -\frac{1}{d}n\log n+O(n).
\]
There is a conjecture about the value of the asymptotic coefficient of $n$ for 
$d=2,4,8$ and $24$ which is similar to the one above, see \cite{BHS12}. For 
$d\ge 3$ there is a lower bound for this coefficient, see \cite{Bra08}, and it is 
negative. But for $d\ge 3$, it is not known if the limit $\lim_{n\to \infty} 
\left[\mathcal{E}_{log}(n)-V_{\rm log}(\S^d) n^2+\frac{1}{d}n\log 
n\right]/n$ exists. For $d=2$, B\'etermin and Sandier \cite{Bet14} show that the 
corresponding limit exists and that the conjectured value
\[
\frac{1}{d}\log 
\frac{\omega_d}{|\Lambda_d|}+\zeta'_{\Lambda_d}(0)=-0.0556...,
\]
would be correct if the triangular lattice was a minimizer of the Coulombian 
renormalized energy introduced by Sandier and Serfaty, 
\cite{SanSer2015}.

As before, we can easily get upper bounds for the constant involved in 
the above conjectures but, as the constant seems to be negative (it is for 
$d=2$), we think these bounds are not so interesting.

But for the sake of illustration:

  When $d=2$ we get $C_2=\log 2+\frac{1}{2}-\gamma\approx 0.6159...$ while the 
constant it is known to lie in $(-0.2254,-0.0556)$, see \cite{Bet14}. By taking 
the expected logarithmic energy of the point process in $\S^2$ given by the 
zeros of polynomials with random coefficients via the stereographic projection, 
the authors in \cite{ABS11} get the value $\log 2-\frac{1}{2}= 0.1931...$. By 
using the determinantal process with exponential decay introduced by Krishnapur, 
i.e. the spherical ensemble, the authors in \cite{ASZ14} get $\log 
2-\frac{\gamma}{2}\approx 0.4045...$.

A final remark is in order: in the case $d=1$ we have that $V_{\rm  
log}(\S^1)=0$ and
\[
\mathbb{E}_{x\in(\S^1)^n} (E_0(x))=-n \log n +(1-\gamma) n+o(n),
\]
while the minimal energy it is known to be $\mathcal{E}(0,n)=-n\log n$ (the energy of the roots of unity).  One 
cannot improve the constant $1-\gamma\approx 0.4227...$ by taking other 
rotation invariant projection kernels because there is just one. As we will see 
in the next section in several dimensions there are more kernels.

  In the limiting case $s=d$ the optimal continuous energy is not finite and 
this case is called singular. In the discrete setting it is known from 
\cite{KS98} that 
\[
\lim_{n\to \infty} \frac{\mathcal{E}(d,n)}{n^2 \log n}=\frac{\omega_{d-1}}{d 
\omega_d}.
\]

It was shown in \cite[Proposition 2]{BHS12} that
\[
-c(d)n^2 +O(n^{2-2/d}\log n) \le \mathcal{E}(d,n)- \frac{\omega_{d-1}}{d 
\omega_d}n^2 \log n\le \frac{\omega_{d-1}}{d \omega_d}n^2 \log \log n+o(n^2),
\]
with 
\[
c(d)=\frac{\omega_{d-1}}{d \omega_d}\left( 1-\log \frac{\omega_{d-1}}{d 
\omega_d}+d[\psi_0(d/2)-\psi(1)-\log 2]  \right).
\]

And it was conjectured \cite[Conjecture 5]{BHS12} that

\[
\mathcal{E}(d,n)=\frac{\omega_{d-1}}{d \omega_d}n^2 \log n+A_{d,d} 
n^2+O(1),
\]
where
\[
A_{d,d}=\lim_{s\to d}\left[V_s(\S^d)+  \frac{A_{s,d}}{\omega_d^{s/d}} 
\right],
\]
and $A_{s,d}$ is the constant in \eqref{constantconjectured}.
Observe that when $d=2$ we have $A_{2,2}=-0.0857...$.

In the case $d=2$ it was shown in \cite{ASZ14} that the correct order of the 
second term 
is indeed $n^2$ by showing that the expected $2$-energy of the spherical ensemble is
\[
\frac{1}{4} n ^2 \log n+ \frac{\gamma}{4}n^2-\frac{n}{8}-\frac{1}{48}+O(n^{-2}).
\]
We get a similar result.

\begin{theorem}\label{th:rieszsingular}
 Let $x=(x_1,\ldots,x_n)\in (\S^d)^n,$ where $n=\pi_L,$ be $n$ points drawn from the harmonic ensemble. 
Then
 \[
\mathbb{E}_{x\in(\S^d)^n}(E_d(x))=\frac{\omega_{d-1}}{d \omega_d}n^2 \log 
n+C_{d,d} n^2 +o(n^2),
\]
where
\begin{equation}		\label{constantlogarithm}
C_{d,d}=\frac{\omega_{d-1}}{2 
\omega_d}\left(\psi_0(d+1)-\psi_0\left(\frac{d}{2}
+1\right)\right)-\psi_0\left(\frac{d}{2}\right)-\frac{1}{d}-\frac{1}{d}\log 
\frac{2}{d!}.  
\end{equation}

\end{theorem}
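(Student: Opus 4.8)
The plan is to obtain the singular energy as the limit $s\to d^-$ of the exact formula of Theorem~\ref{th:rieszsharmonic}, in the same spirit in which Theorem~\ref{th:logharmonic} was read off from that formula at $s=0$. By Corollary~\ref{cor:rieszenergies}, $K_L(x,x)=\pi_L=n$ and the fact that $K_L$ is real valued,
\[
\mathbb{E}_{x\in(\S^d)^n}(E_s(x))=\int_{\S^d}\int_{\S^d}\frac{n^2-K_L(x,y)^2}{\|x-y\|^s}\,d\mu(x)\,d\mu(y).
\]
Since $K_L$ is a polynomial in $\langle x,y\rangle$ with $K_L(x,x)=n$, the numerator vanishes on the diagonal like $\|x-y\|^2$, the integrand is of size $\|x-y\|^{2-s}$ there, and the integral converges exactly for $s<d+2$; hence $g(s)=\mathbb{E}_{x\in(\S^d)^n}(E_s(x))$ is analytic on $0<s<d+2$ and $g(d)=\lim_{s\to d^-}g(s)$ may be computed from the right-hand side of Theorem~\ref{th:rieszsharmonic}. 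Equivalently, reducing by rotational invariance, I would analyse $\frac{\omega_{d-1}}{2\omega_d}\int_0^\pi (n^2-K_L(\cos\theta)^2)\frac{\cos^{d-1}(\theta/2)}{\sin(\theta/2)}\,d\theta$.

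Both $n^2V_s(\S^d)$ and the subtracted hypergeometric term carry a simple pole at $s=d$ coming from $\Gamma(\tfrac{d-s}2)\sim\tfrac2{d-s}$, and the first task is to check that the residues cancel, using $V_s(\S^d)\sim\tfrac{\omega_{d-1}}{\omega_d}\tfrac1{d-s}$. The ${}_4F_3$ needs special handling: at $s=d$ its upper parameter $\tfrac{d-s}2$ vanishes, so the series collapses to $1$, and via $\partial_a(a)_n|_{a=0}=(n-1)!$ its first-order part is a single sum converging, as $L\to\infty$, to $\Sigma_\infty=\sum_{n\ge1}\frac{(-d/2)_n}{(d/2+1)_n}\frac1n$. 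The logarithm is produced by the pole meeting the large-$L$ growth of the prefactor: since $\frac{C_{s,d}(L)}{\binom{L+d/2}{L}^2}\sim\Gamma(\tfrac d2+1)^2L^{s-d}$ and $\pi_L\sim\tfrac2{d!}L^d$, expanding $L^{s-d}=1-(d-s)\log L+\cdots$ against $\tfrac2{d-s}$ leaves a finite $+\log L$; with $\log L=\tfrac1d\log n-\tfrac1d\log\tfrac2{d!}+o(1)$ this yields the leading $\tfrac{\omega_{d-1}}{d\omega_d}n^2\log n$, whose coefficient agrees with $\lim_n\mathcal{E}(d,n)/(n^2\log n)$ from \cite{KS98}.

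Finally I would assemble $C_{d,d}$ by collecting every finite part at $s=d$: the Laurent tail of $\Gamma(\tfrac{d-s}2)$, the first-order expansions of $2^{d-1-s}$, of $\Gamma(1+\tfrac s2)$, and of the $s$-dependent Gammas inside $C_{s,d}(L)$ (each $\Gamma'/\Gamma=\psi_0$ contributing a digamma, with the large-$L$ values $\psi_0(L+\tfrac d2),\psi_0(L+\tfrac d2+1)\to\log L$ feeding back), together with the finite part of $V_s(\S^d)$ and the constant $\Sigma_\infty$. The main obstacle is precisely this constant. First, one must justify interchanging $s\to d$ with the $L\to\infty$ expansion, i.e.\ prove that the $o$-terms in the prefactor and in $C_{s,d}(L)\sim L^s$ are uniform for $s$ near $d$, so that the $\log L$ identified above is genuinely the only logarithm that appears. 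Second, one must evaluate $\Sigma_\infty$ (a ${}_3F_2$-type value reducible to digammas) and the accompanying large-$L$ constants, and then show the whole collection collapses to exactly \eqref{constantlogarithm}; this digamma bookkeeping is delicate and is what accounts for the final constant mixing $\tfrac{\omega_{d-1}}{\omega_d}$-weighted and apparently bare $\psi_0$ terms. I would cross-check the outcome both against the direct Mehler--Heine (\eqref{mehlerheine}) and oscillatory (\eqref{Jacobiestimate}) asymptotics of the single integral displayed above, and against the specialization $d=2$, which should give $C_{2,2}=\gamma-\tfrac38$.
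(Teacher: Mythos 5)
Your plan is essentially the paper's proof: there too the theorem is obtained as $\lim_{s\to d^-}$ of Theorem~\ref{th:rieszsharmonic}, using exactly the pole--zero cancellation at $s=d$ that you describe, extracting the $\log n$ term from the digammas $\psi_0(L+\tfrac d2)$, $\psi_0(L+\tfrac d2+1)$ coming from the $s$-dependent Gamma factors of $C_{s,d}(L)$, and identifying your $\Sigma_\infty$ via Gauss's theorem as yielding $\tfrac12\left(\psi_0(d+1)-\psi_0(\tfrac d2+1)\right)$ (Proposition~\ref{limitderivative}). The two obstacles you single out are resolved there just as you anticipate: the interchange issue disappears because the $s$-derivative is computed exactly at fixed $L$ (all factors are explicit Gamma ratios, so only digammas appear, and the $L\to\infty$ asymptotics are taken only afterwards), and the convergence of the finite sums $F_L'(d)$ to $-\tfrac12\Sigma_\infty$ is proved by termination of the sum when $d$ is even and, when $d$ is odd, by a dominated-convergence argument based on \eqref{ineq_hypergeometric} and the decay $\bigl|(-d/2)_k/(d/2+1)_k\bigr|\lesssim k^{-d}$.
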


For example, when $d=2$ we get 
\[
\mathbb{E}_{x\in(\S^2)^n}(E_2(x))=\frac{1}{4} n ^2 \log n+ \left( 
\gamma-\frac{3}{8}\right) n^2+o(n^2),
\]
so a larger energy than in \cite{ASZ14} as $\gamma-\frac{3}{8}=0.2022...$ 
and $\frac{\gamma}{4}=0.1443...$.
  
As in Corollary \ref{cor:allpoints1}, we get a bound for 
$\mathcal{E}(d,n)$ for all $n$ (not only of the form $\pi_L$) and we get the 
correct order
for the second asymptotic term.

\begin{corollary}\label{cor:allpoints2}
  For any $n\ge 1$ (not necessarily of the form $n=\pi_L$),
\[
  \mathcal{E}_d(n)\le \frac{\omega_{d-1}}{d \omega_d}n^2 \log n+C_{d,d} n^2 
+o(n^2).
 \]
where $C_{d,d}$ is the constant in \eqref{constantlogarithm}.
\end{corollary}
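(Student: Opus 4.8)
The plan is to run exactly the argument already used for Corollary~\ref{cor:allpoints1}, only feeding in the singular expansion of Theorem~\ref{th:rieszsingular} in place of the power-law expansion of Theorem~\ref{th:asymptoticrieszsharmonic}. First I would record the monotonicity of the minimal energy in the number of points: since every summand $\|x_i-x_j\|^{-d}$ is positive, deleting a point from any configuration only decreases its $d$-energy, so removing $m-n$ points one at a time from an optimal $m$-point configuration gives $\mathcal{E}_d(n)\le \mathcal{E}_d(m)$ whenever $n\le m$. Combining this with the trivial bound $\mathcal{E}_d(m)\le \mathbb{E}_{x\in(\S^d)^m}(E_d(x))$ and choosing, for $n$ not of the form $\pi_L$, the index $L$ with $n\in(\pi_L,\pi_{L+1})$ and $m=\pi_{L+1}$, Theorem~\ref{th:rieszsingular} yields
\[
\mathcal{E}_d(n)\le \frac{\omega_{d-1}}{d\omega_d}\,\pi_{L+1}^2\log \pi_{L+1}+C_{d,d}\,\pi_{L+1}^2+o(\pi_{L+1}^2),
\]
and it remains only to transfer the right-hand side from $\pi_{L+1}$ to $n$.

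For that transfer I would control the gap between $m=\pi_{L+1}$ and $n$. From \eqref{eq:piL} both $\pi_L$ and $\pi_{L+1}$ equal $A_dL^d+O(L^{d-1})$ with $A_d=2/\Gamma(d+1)$, so that $\pi_{L+1}-\pi_L=O(L^{d-1})$ while $n\sim A_dL^d$; hence $L\sim (n/A_d)^{1/d}$ and
\[
\frac{m}{n}=1+O(L^{-1})=1+O(n^{-1/d}).
\]
Writing $m=n(1+\epsilon_n)$ with $\epsilon_n=O(n^{-1/d})$, a short expansion gives $m^2=n^2+O(n^{2-1/d})$ and $\log m=\log n+O(n^{-1/d})$, and therefore
\[
m^2\log m=n^2\log n+O(n^{2-1/d}\log n),\qquad m^2=n^2+O(n^{2-1/d}).
\]
Since $n^{2-1/d}\log n=o(n^2)$ and $m\sim n$ forces $o(m^2)=o(n^2)$, substituting these into the displayed bound collapses all correction terms into the error and produces exactly
\[
\mathcal{E}_d(n)\le \frac{\omega_{d-1}}{d\omega_d}\,n^2\log n+C_{d,d}\,n^2+o(n^2),
\]
which is the assertion.

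The one step that genuinely requires care—and the main obstacle—is precisely this last transfer. In the power-law setting of Corollary~\ref{cor:allpoints1} the leading and second terms differ by a full power $n^{s/d}$, which gives ample room to absorb the $O(L^{2d-1})$ slack coming from replacing $\pi_{L+1}$ by $n$; here, by contrast, the leading term $n^2\log n$ exceeds the second term $n^2$ only by a logarithmic factor. One must therefore verify that the $O(n^{-1/d})$ relative perturbation induced by $\pi_{L+1}-\pi_L\sim h_{L+1}$ feeds into $n^2\log n$ to produce an error of size $O(n^{2-1/d}\log n)$ that is still $o(n^2)$; once this estimate is in hand the conclusion is immediate, and in particular no lower bound (which is not claimed) is needed.
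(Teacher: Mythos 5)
Your proposal is correct and follows essentially the same route as the paper, which simply invokes the argument given after Corollary~\ref{cor:allpoints1} (monotonicity $\mathcal{E}(d,n)\le\mathcal{E}(d,\pi_{L+1})\le \mathbb{E}_{x\in(\S^d)^{\pi_{L+1}}}(E_d(x))$ followed by replacing $\pi_{L+1}$ by $n$ in the asymptotics) and notes that the computation still works when $s=d$. Your explicit verification that the perturbation $\pi_{L+1}=n(1+O(n^{-1/d}))$ contributes only $O(n^{2-1/d}\log n)=o(n^2)$ is exactly the point the paper leaves implicit, and it is handled correctly.
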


Indeed, see discussion after Corollary \ref{cor:allpoints1} and observe that 
the computation works when $s=d.$

\subsection{Optimality for isotropic projection  kernels}  

In our next results we deal with more general kernels. We assume that our 
kernel is invariant by rotations i.e.
\[
 \mbox{when}\;\;d(x,y)=d(z,t)\;\;\mbox{then}\;\; K(x,y)=K(z,t), \;\;\;\;x,y,z,t\in \S^d.
\]
This implies that the random point field is invariant by rotations, or 
isotropic, and that it can be written as $K(\langle x,y\rangle)$ for some 
$K:[-1,1]\rightarrow \C$. If we want that this kernel generates a determinantal 
process, the function $K$ should be positive definite in $\S^d$ and by 
Schoenberg's Theorem we get that, see \cite{Sch42} or \cite[Th.~1, 
p.~123]{CheneyLight}, it has the form 
\begin{equation}\label{eq:expansionesferas}
 K(x,y)=K(\langle x,y\rangle),\quad K(t)=\sum_{k=0}^\infty a_k C_k^{\frac{d-1}{2}}(t),
\end{equation}
where $C_k^{\frac{d-1}{2}}$ is the Gegenbauer polynomial 
and the $a_k\ge 0 $ satisfy:
\[
 \mbox{trace}(K)=K(1)=\sum_{k=0}^\infty a_k\binom{d+k-2}{k}<\infty.
\]
From Macchi-Soshknikov theorem 
\cite[Theorem 4.5.5]{GAF} the fact that $0\le a_k \le \frac{2k+d-1}{d-1}$ is needed also to get a 
determinantal process as the operator has to have spectrum in $[0,1]$ and
$\frac{2k+d-1}{d-1}C_k^{\frac{d-1}{2}}(\langle x,y \rangle)$ (the zonal harmonic of 
degree $k$) is 
the projection kernel onto $\mathcal{H}_k$.

As we want the process to have $n$ points a.s. and it is known that the total 
number of 
points in the process has the distribution of a sum 
of independent Bernoulli's with parameters $a_k\frac{d-1}{2k+d-1},$ see 
\cite[Theorem 4.5.3]{GAF}, we impose that our kernel is a projection kernel
and therefore
\begin{equation}\label{eq:form}
a_k=\begin{cases}
  \frac{2k+d-1}{d-1} &\text{for finitely many $k$,}\\
  0&\text{otherwise,}
  \end{cases}
\end{equation}
with 
\begin{equation}			\label{traceequaln}
\sum_{k=0}^\infty a_k\binom{d+k-2}{k}=n. 
\end{equation}
Note that such a sequence $a_k$ does not exist for all values of $n$.  It does 
however exist for an infinite sequence of $n$ (which depends on $d$) including 
those $n$ of the form $n=\pi_L$ for some positive integer $L$.


In such a general setup we get an expression in terms of the 
integral of the kernel.

\begin{theorem}\label{th:rieszsgeneral1}
 Let $x=(x_1,\ldots,x_n)\in (\S^d)^n$ be $n$ points generated by the determinantal random point 
process associated to the kernel $K$. Then, for $0<s<d$,
 \[
  \mathbb{E}_{x\in(\S^d)^n}(E_s(x))= 
\frac{\omega_{d-1}}{\omega_d2^{s/2}}\Bigl(n^2 2^{d-1-\frac{s}{2}} 
\operatorname{B}\Bigl(\frac{d}{2},\frac{d}{2}-\frac{s}{2}\Bigr)-\int_{-1}
^1\frac{
|K(t)|^2(1-t^2)^{d/2-1}}{(1-t)^{s/2}}\,dt\Bigr).
 \]
\end{theorem}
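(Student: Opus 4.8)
The plan is to start from Corollary~\ref{cor:rieszenergies}, which expresses the expected Riesz $s$-energy of points from a determinantal process with projection kernel $K$ as the double integral
\[
\mathbb{E}_{x\in(\S^d)^n}(E_s(x))=\int_{\S^d}\int_{\S^d}\frac{K(x,x)K(y,y)-|K(x,y)|^2}{\|x-y\|^s}\,d\mu(x)\,d\mu(y).
\]
Since $K$ is a projection kernel of trace $n$, the diagonal $K(x,x)$ is constant: indeed $K(x,x)=\sum_k a_k C_k^{(d-1)/2}(1)=\sum_k a_k\binom{d+k-2}{k}=n$ by \eqref{traceequaln}. Thus the first term splits off cleanly as $n^2\int\int\|x-y\|^{-s}\,d\mu(x)\,d\mu(y)=n^2V_s(\S^d)$, and the remaining task is to reduce the single-variable $|K(x,y)|^2$ integral over $\S^d\times\S^d$ to a one-dimensional integral in the inner-product variable $t=\langle x,y\rangle$.

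Next I would exploit the isotropy. Because both $K(x,y)=K(\langle x,y\rangle)$ and $\|x-y\|^{-s}=(2-2\langle x,y\rangle)^{-s/2}$ depend only on $t=\langle x,y\rangle$, I use the standard rotational-invariance reduction for the sphere: for any $F$ depending only on $\langle x,y\rangle$,
\[
\int_{\S^d}\int_{\S^d}F(\langle x,y\rangle)\,d\mu(x)\,d\mu(y)=\frac{\omega_{d-1}}{\omega_d}\int_{-1}^1 F(t)(1-t^2)^{d/2-1}\,dt,
\]
where the weight $(1-t^2)^{d/2-1}$ and the normalizing ratio $\omega_{d-1}/\omega_d$ come from integrating out the latitude angle relative to a fixed pole (the remaining $(d-1)$-sphere of constant latitude contributes its surface area, and $\sin^{d-1}\theta\,d\theta$ in terms of $t=\cos\theta$ gives $(1-t^2)^{(d-1-1)/2}=(1-t^2)^{d/2-1}$ up to the $dt$ Jacobian). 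Applying this to $F(t)=|K(t)|^2(2-2t)^{-s/2}$ turns the energy's second term into
\[
\frac{\omega_{d-1}}{\omega_d}\int_{-1}^1\frac{|K(t)|^2}{(2-2t)^{s/2}}(1-t^2)^{d/2-1}\,dt
=\frac{\omega_{d-1}}{\omega_d\,2^{s/2}}\int_{-1}^1\frac{|K(t)|^2(1-t^2)^{d/2-1}}{(1-t)^{s/2}}\,dt,
\]
pulling the factor $2^{-s/2}$ out front.

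Finally I would rewrite the leading term in the matching normalization. Using \eqref{eq:Vs} with $\lambda=(d-2)/2$, one has $V_s(\S^d)=\frac{\omega_{d-1}2^{d-s-1}}{\omega_d}B(\frac{d}{2},\frac{d}{2}-\frac{s}{2})$, so $n^2V_s(\S^d)=\frac{\omega_{d-1}}{\omega_d\,2^{s/2}}\cdot n^2\,2^{d-1-s/2}B(\frac{d}{2},\frac{d}{2}-\frac{s}{2})$. Collecting both pieces under the common prefactor $\frac{\omega_{d-1}}{\omega_d\,2^{s/2}}$ yields exactly the claimed formula. The only genuine point requiring care—the \emph{main obstacle}—is justifying the convergence/finiteness of the one-dimensional integral near $t=1$, where both the weight $(1-t^2)^{d/2-1}=(1-t)^{d/2-1}(1+t)^{d/2-1}$ and the singular factor $(1-t)^{-s/2}$ interact: the integrand behaves like $(1-t)^{d/2-1-s/2}|K(t)|^2$, which is integrable precisely when $s<d$ (the hypothesis), so the reduction is valid and the interchange of summation and integration (if one expands $|K(t)|^2$ via \eqref{eq:expansionesferas}) is justified by nonnegativity and Tonelli.
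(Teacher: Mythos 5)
Your proposal is correct and takes essentially the same route as the paper's proof: both start from Corollary~\ref{cor:rieszenergies}, use that the diagonal is constant ($K(x,x)=K(1)=n$), reduce the double integral by rotational invariance and zonal integration (Lemma~\ref{lem:computeintegralsphere}) to a one-dimensional integral in $t=\langle x,y\rangle$, and identify the $n^2$ term with the beta function. The only cosmetic difference is that you rewrite the leading term via the identity \eqref{eq:Vs} for $V_s(\S^d)$, whereas the paper evaluates the beta integral $\int_{-1}^1(1-t^2)^{d/2-1}(1-t)^{-s/2}\,dt$ directly at the end --- the same computation in a different order.
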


In the particular case of the Riesz $2$-energy and $\S^d$ for $d\ge 3$ one can 
get, after lengthy computations, the following explicit expression for the 
energy in terms of the coefficients of the kernel. We haven't been able to get 
simple expressions like this one for other energies.

\begin{theorem}\label{th:case2}
 In the setting of Theorem \ref{th:rieszsgeneral1}, for  $s=2$ and $d\geq3$ we 
have
 \[
   \mathbb{E}_{x\in(\Sd)^n}(E_2(x))=V_2(\S^d)\left(n^2 -\sum_{\ell=0}^\infty 
a_{\ell}\binom{d+\ell-2}{\ell}\left(a_\ell+2\sum_{j>\ell} a_j \right)\right).
 \] 
\end{theorem}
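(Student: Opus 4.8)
The plan is to specialize Theorem~\ref{th:rieszsgeneral1} to $s=2$ (admissible exactly because $d\geq 3$ makes $s<d$ and keeps $B\!\left(\frac{d}{2},\frac{d}{2}-1\right)$ finite) and then evaluate the remaining integral from the Schoenberg expansion of $K$. Writing $\mu=\frac{d-1}{2}$ and recalling from \eqref{eq:Vs} that $V_2(\S^d)=\frac{\omega_{d-1}2^{d-3}}{\omega_d}B\!\left(\frac{d}{2},\frac{d}{2}-1\right)$, Theorem~\ref{th:rieszsgeneral1} at $s=2$ reads
\[
\mathbb{E}_{x\in(\S^d)^n}(E_2(x))=V_2(\S^d)\,n^2-\frac{\omega_{d-1}}{2\omega_d}\int_{-1}^1|K(t)|^2\,\frac{(1-t^2)^{d/2-1}}{1-t}\,dt .
\]
Since the $a_k$ are real and $K(t)=\sum_k a_k C_k^{\mu}(t)$, expanding $|K(t)|^2=\sum_{\ell,m}a_\ell a_m C_\ell^{\mu}(t)C_m^{\mu}(t)$ reduces the whole problem to the moments
\[
I_{\ell,m}:=\int_{-1}^1 C_\ell^{\mu}(t)\,C_m^{\mu}(t)\,(1-t)^{d/2-2}(1+t)^{d/2-1}\,dt .
\]

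The heart of the matter is the claim that $I_{\ell,m}$ depends on the pair only through $\min(\ell,m)$, namely
\[
I_{\ell,m}=2^{d-2}B\!\left(\tfrac{d}{2},\tfrac{d}{2}-1\right)\binom{d+\min(\ell,m)-2}{\min(\ell,m)} .
\]
Granting this, the proof finishes by bookkeeping. The prefactor collapses because $\frac{\omega_{d-1}}{2\omega_d}\,2^{d-2}B\!\left(\frac{d}{2},\frac{d}{2}-1\right)=V_2(\S^d)$, and splitting the double sum into its diagonal and twice its upper-triangular part gives, using $\min(\ell,m)=\ell$ whenever $\ell<m$,
\[
\sum_{\ell,m}a_\ell a_m\binom{d+\min(\ell,m)-2}{\min(\ell,m)}=\sum_\ell a_\ell\binom{d+\ell-2}{\ell}\Bigl(a_\ell+2\sum_{j>\ell}a_j\Bigr),
\]
which is exactly the asserted expression.

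To prove the moment identity I would pass to generating functions. Using $(1-2rt+r^2)^{-\mu}=\sum_\ell C_\ell^\mu(t)r^\ell$ and interchanging sum and integral (legitimate for $|r|,|s|<1$ by uniform convergence on $[-1,1]$), one is led to the single integral $F(r,s)=\int_{-1}^1(1-2rt+r^2)^{-\mu}(1-2st+s^2)^{-\mu}(1-t)^{d/2-2}(1+t)^{d/2-1}\,dt$. The substitution $u=\frac{1-t}{2}$ turns $F$ into a Beta-type integral that is an Appell $F_1$ function,
\[
F(r,s)=2^{d-2}B\!\left(\tfrac{d}{2},\tfrac{d}{2}-1\right)(1-r)^{-(d-1)}(1-s)^{-(d-1)}F_1\!\left(\lambda;\mu,\mu;d-1;-\tfrac{4r}{(1-r)^2},-\tfrac{4s}{(1-s)^2}\right),
\]
where $\lambda=\frac{d-2}{2}$. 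Because the bottom parameter equals the sum of the two numerator parameters ($d-1=\mu+\mu$), the reduction $F_1(a;b,b';b+b';x,y)=(1-y)^{-a}\,{}_2F_1\!\left(a,b;b+b';\frac{x-y}{1-y}\right)$ collapses $F_1$ to a prefactor $(1-Y)^{-\lambda}$ times a Gauss function of the form ${}_2F_1(\lambda,\mu;2\mu;z)={}_2F_1\!\left(a,a+\tfrac12;2a+1;z\right)$ with $a=\lambda$. The latter has the closed form ${}_2F_1\!\left(a,a+\tfrac12;2a+1;z\right)=\bigl(\tfrac12(1+\sqrt{1-z})\bigr)^{-2a}$; combining it with the prefactor and simplifying the algebra — the key computation being $\sqrt{1-z}=\frac{(1+r)(1-s)}{(1-r)(1+s)}$ for the relevant argument $z$ — yields
\[
F(r,s)=2^{d-2}B\!\left(\tfrac{d}{2},\tfrac{d}{2}-1\right)\frac{(1-rs)^{2-d}}{(1-r)(1-s)} .
\]

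Finally I would match coefficients. Using $\sum_k\binom{d+k-2}{k}x^k=(1-x)^{-(d-1)}$ one verifies that the right-hand side above is precisely $2^{d-2}B\!\left(\frac{d}{2},\frac{d}{2}-1\right)\sum_{\ell,m}\binom{d+\min(\ell,m)-2}{\min(\ell,m)}r^\ell s^m$, so reading off the coefficient of $r^\ell s^m$ proves the moment identity. I expect the main obstacle to be this special-function chain: recognising the Appell $F_1$, applying the $c=b+b'$ reduction, invoking the quadratic evaluation of the Gauss function, and carrying out the argument simplification without error. The energy bookkeeping on either side of the moment identity is routine.
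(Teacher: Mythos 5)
Your proposal is correct, and it follows the paper's overall reduction while proving the key moment identity by a genuinely different method. Like the paper, you specialize Theorem~\ref{th:rieszsgeneral1} to $s=2$, expand $|K(t)|^2$ through the Schoenberg expansion, reduce everything to the integrals $Q_{\ell,m}^d=\int_{-1}^1 C_\ell^{\frac{d-1}{2}}(t)C_m^{\frac{d-1}{2}}(t)(1-t)^{d/2-2}(1+t)^{d/2-1}\,dt$, and close with the same diagonal/off-diagonal reordering; your moment identity is precisely the paper's Lemma~\ref{lem:Q}. Where you diverge is the proof of that identity: the paper evaluates $Q_{0,0}^d$, $Q_{1,0}^d$, $Q_{1,1}^d$ directly by Beta integrals and then runs an induction in five claims on the Gegenbauer three-term recurrence (its recursion \eqref{eq:recursion}, with a correction term at $k=j+1$ coming from orthogonality), whereas you compute the two-variable generating function $F(r,s)=\sum_{\ell,m}Q_{\ell,m}^d\,r^\ell s^m$ in closed form via the Gegenbauer generating function, the Euler integral for Appell's $F_1$ (with parameters $a=\frac{d}{2}-1$ and $c=d-1$, convergent exactly because $d\geq3$), the degenerate reduction available since $c=\frac{d-1}{2}+\frac{d-1}{2}$, and the quadratic evaluation ${}_2F_1\bigl(a,a+\tfrac12;2a+1;z\bigr)=\bigl(\tfrac12(1+\sqrt{1-z})\bigr)^{-2a}$. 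I verified your algebra: with $z=\frac{4(s-r)(1-rs)}{(1-r)^2(1+s)^2}$ one indeed gets $\sqrt{1-z}=\frac{(1+r)(1-s)}{(1-r)(1+s)}$ and $\tfrac12(1+\sqrt{1-z})=\frac{1-rs}{(1-r)(1+s)}$, so everything assembles to $F(r,s)=2^{d-2}\mathrm{B}\bigl(\tfrac d2,\tfrac d2-1\bigr)\frac{(1-rs)^{2-d}}{(1-r)(1-s)}$, whose $(\ell,m)$ coefficient is $2^{d-2}\mathrm{B}\bigl(\tfrac d2,\tfrac d2-1\bigr)\binom{d+\min(\ell,m)-2}{\min(\ell,m)}$, as required; the sum-integral interchange is legitimate since $|C_\ell^{\frac{d-1}{2}}(t)|\le C_\ell^{\frac{d-1}{2}}(1)$ grows only polynomially in $\ell$, giving uniform convergence against a weight that is integrable for $d\geq 3$. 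As for what each route buys: the paper's induction is elementary and self-contained, using nothing beyond the recurrence and Beta integrals, while yours is shorter and more structural once the classical special-function identities are granted — it produces all the moments simultaneously, and the closed form of $F(r,s)$ makes transparent why $Q_{\ell,m}^d$ depends only on $\min(\ell,m)$, a fact that emerges in the paper only at the end of the five-claim induction.
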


The following result provides a criterion to compare energies given by different 
kernels. In particular it implies that the harmonic kernel
gives the smallest expected $2$-energy among the different isotropic kernels 
with the same trace.

\begin{theorem}\label{th:optimal}
 Let $K_a$ and $K_b$ be two kernels with coefficients $a=(a_0,a_1,\ldots)$ and 
$b=(b_0,b_1,\ldots)$ satifying conditions 
\eqref{eq:form}, \eqref{traceequaln}. Let $\mathbb{E}_a$ and $\mathbb{E}_b$ 
denote respectively 
the expected value of $E_2(x)$ when $x$ is given by the determinantal point 
process associated to $K_a$ and $K_b.$ Assume that for every $i,j\in\N$ we have:
 \begin{equation}\label{eq:if}
  \text{if }i<j,a_i=0\text{ and }a_j>0\text{ then }b_i=0.
 \end{equation}
 Then, $\mathbb{E}_a\leq\mathbb{E}_b$, with strict inequality unless $a=b$. In 
particular, the harmonic kernel is optimal since \eqref{eq:if} is trivially 
satisfied in that case. 
\end{theorem}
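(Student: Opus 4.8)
\emph{Reduction to a combinatorial inequality.} Recall from Theorem~\ref{th:case2} that for $d\ge3$ the expected $2$-energy of the process attached to $K_a$ equals $V_2(\S^d)\bigl(n^2-Q(a)\bigr)$, where
\[
Q(a)=\sum_{\ell\ge0}a_\ell\binom{d+\ell-2}{\ell}\Bigl(a_\ell+2\sum_{j>\ell}a_j\Bigr),
\]
and $V_2(\S^d)>0$. Thus $\mathbb{E}_a\le\mathbb{E}_b$ is equivalent to $Q(a)\ge Q(b)$, and the plan is to prove this, with equality only when $a=b$. Writing $w_\ell=\binom{d+\ell-2}{\ell}$, condition \eqref{eq:form} says each coordinate satisfies $a_\ell\in\{0,\tfrac{2\ell+d-1}{d-1}\}$, so a kernel is encoded by its set $S_a=\{\ell:a_\ell>0\}$ of active modes, with trace constraint $\sum_{\ell}a_\ell w_\ell=n$ from \eqref{traceequaln}; note that $w_\ell$ is strictly increasing in $\ell$ precisely because $d\ge3$.

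\emph{A telescoping identity for $Q$.} The first step I would take is to symmetrize the quadratic form: since $w$ is increasing, the term $a_\ell w_\ell a_j$ with $\ell<j$ carries the smaller of the two weights, so
\[
Q(a)=\sum_{i,j}a_ia_j\,w_{\min(i,j)}.
\]
Setting $\Delta_k=w_k-w_{k-1}$ with $w_{-1}:=0$ (so $\Delta_k>0$ for all $k$ when $d\ge3$) and telescoping $w_{\min(i,j)}=\sum_{k\le\min(i,j)}\Delta_k$, an interchange of summation yields the key formula
\[
Q(a)=\sum_{k\ge0}\Delta_k\,R_k(a)^2,\qquad R_k(a):=\sum_{\ell\ge k}a_\ell,
\]
and the same telescoping turns the constraint \eqref{traceequaln} into $\sum_{k\ge0}\Delta_k R_k(a)=n$. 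Writing $D_k:=R_k(a)-R_k(b)$ I then obtain
\[
Q(a)-Q(b)=\sum_{k\ge0}\Delta_k D_k\bigl(R_k(a)+R_k(b)\bigr),\qquad \sum_{k\ge0}\Delta_k D_k=n-n=0.
\]

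\emph{Reading the sign pattern off \eqref{eq:if}.} The crux is to determine the shape of $D_k$. Let $P=S_a\setminus S_b$ and $M=S_b\setminus S_a$; the trace equality gives $\sum_{\ell\in P}a_\ell w_\ell=\sum_{\ell\in M}a_\ell w_\ell$, and hypothesis \eqref{eq:if} forces every element of $M$ to exceed $\max S_a$, hence to lie strictly above every element of $P$. Since $D_k-D_{k+1}=a_k-b_k$ equals $a_k$ on $P$, equals $-b_k$ on $M$, and vanishes elsewhere, as $k$ grows $D_k$ first decreases (while passing the low block $P$) and then increases (while passing the high block $M$) back to $0$; moreover $D_0=\sum_{\ell\in P}a_\ell-\sum_{\ell\in M}a_\ell>0$, because a fixed weighted total spread over the strictly larger weights $w_\ell$, $\ell\in M$, needs less unweighted mass. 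Hence $D_k$ changes sign exactly once, being $\ge0$ for $k<k_0$ and $\le0$ for $k\ge k_0$, while $R_k(a)+R_k(b)$ is nonincreasing in $k$.

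\emph{Conclusion and main obstacle.} Subtracting the constant $R_{k_0}(a)+R_{k_0}(b)$ and using $\sum_k\Delta_k D_k=0$, I would rewrite
\[
Q(a)-Q(b)=\sum_{k\ge0}\Delta_k D_k\bigl(R_k(a)+R_k(b)-R_{k_0}(a)-R_{k_0}(b)\bigr),
\]
in which every summand is nonnegative: for $k<k_0$ both factors $D_k$ and the bracket are $\ge0$, and for $k\ge k_0$ both are $\le0$. This gives $Q(a)\ge Q(b)$, i.e.\ $\mathbb{E}_a\le\mathbb{E}_b$. For strictness when $a\ne b$, the $k=0$ term is already positive: $D_0>0$ and $R_0(a)>R_{k_0}(a)$ because some mode of $P\subseteq S_a$ necessarily lies below $k_0$. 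The harmonic case is covered since $S_a=\{0,\dots,L\}$ makes the premise of \eqref{eq:if} vacuous. The main obstacle is the geometric step in the third paragraph—extracting from \eqref{eq:if} that $M$ sits entirely above $P$ and hence that $D_k$ is unimodal with a single sign change; once this is secured, the telescoped identity together with the elementary Abel-summation estimate finishes the argument.
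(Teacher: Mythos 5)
Your proof is correct, and it takes a genuinely different route from the paper's. Both arguments start from the same reduction, via Theorem~\ref{th:case2} (equivalently \eqref{eq:esperanzaQs} and Lemma~\ref{lem:Q}), to showing $Q(a)\geq Q(b)$ for the quadratic form $Q(x)=\sum_{i,j}x_ix_j w_{\min(i,j)}$ with $w_\ell=\binom{d+\ell-2}{\ell}$, but they diverge from there. The paper runs an exchange/path argument: it constructs a finite sequence $b=x^0,x^1,\ldots,x^t=a$, where each step adds a nonnegative multiple of a two-coordinate, trace-preserving direction $w_{ij}$ ($i<j$), and verifies that the directional derivative $2x^TMw_{ij}$ of the quadratic form is nonnegative along each step and strictly positive at the first one. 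You instead diagonalize the form: the telescoping $w_{\min(i,j)}=\sum_{k\le\min(i,j)}\Delta_k$ yields the sum-of-squares identity $Q(x)=\sum_k\Delta_k R_k(x)^2$ in the tail sums $R_k$, with $\Delta_k>0$ exactly when $d\geq3$, converts the trace condition \eqref{traceequaln} into $\sum_k\Delta_kR_k=n$, and reduces everything to a Chebyshev/Abel rearrangement inequality, powered by the fact that \eqref{eq:if} places $M=S_b\setminus S_a$ entirely above $S_a$ and hence makes $D_k=R_k(a)-R_k(b)$ unimodal with a single sign change and $D_0>0$. I verified the delicate points: the identity for $Q$; that \eqref{eq:if} forces $\min M>\max S_a\geq\max P$; the strict monotonicity argument $w_{\max P}\sum_{\ell\in P}a_\ell\geq\sum_{\ell\in P}a_\ell w_\ell=\sum_{\ell\in M}b_\ell w_\ell\geq w_{\min M}\sum_{\ell\in M}b_\ell$ giving $D_0>0$; and the strict positivity of the $k=0$ term when $a\neq b$. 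All are sound. Two harmless slips to fix: in your trace identity and in your formula for $D_0$ the sums over $M$ should carry $b_\ell$, not $a_\ell$ (on $M$ one has $a_\ell=0$); and you should record that $a\neq b$ together with equal traces forces both $P$ and $M$ to be nonempty, which your sign-pattern analysis tacitly uses. As for what each approach buys: yours exhibits the energy functional as a positive diagonal form in the tail sums, which makes the optimality structurally transparent and even gives a quantitative lower bound on the gap, namely $Q(a)-Q(b)\geq\Delta_0D_0\bigl(R_0(a)+R_0(b)-R_{k_0}(a)-R_{k_0}(b)\bigr)$; the paper's path argument avoids any sign-change analysis and makes the informal slogan ``more holes imply larger energy'' visible one exchange at a time.
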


\begin{remark}
 Note that the hypotheses \eqref{eq:if} just means that, if there are ``holes'' 
(i.e. intermediate zeros) in the sequence $a_0,a_1,\ldots$ then the sequence 
$b_0,b_1,\ldots$ must also have these holes (thus, informally, 
Theorem~\ref{th:optimal} means that more holes imply larger energy).
\end{remark}

It is natural to ask if the optimality of the harmonic kernel remains true for 
general $s$. We thus propose the following conjecture.
\begin{conjecture}
 The harmonic kernel is optimal for all $s\geq0$ in the sense that if $K$ is 
another isotropic kernel producing $n=\pi_L$ points in $\S^d$ then the expected 
value of $E_s(x)$ when $x$ is drawn from the point process given by $K$, is 
larger than the expected value of $E_s(x)$ when $x$ is drawn from the harmonic 
ensemble. 
\end{conjecture}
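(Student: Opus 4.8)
The plan is to reduce the statement, via the explicit formula of Theorem~\ref{th:case2}, to a purely combinatorial inequality about the coefficient sequences, and then to settle that inequality by rewriting the relevant quadratic form as a nonnegative combination of squared partial traces. Since $V_2(\S^d)>0$ and both processes produce the same number $n$ of points, Theorem~\ref{th:case2} shows that $\mathbb{E}_a\le\mathbb{E}_b$ is equivalent to $Q(a)\ge Q(b)$, where
\[
Q(a)=\sum_{\ell\ge0}a_\ell\binom{d+\ell-2}{\ell}\Bigl(a_\ell+2\sum_{j>\ell}a_j\Bigr).
\]
So everything reduces to comparing $Q(a)$ and $Q(b)$ under hypothesis \eqref{eq:if}.

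The key step is to find the right reformulation of $Q$. Writing $\beta_\ell=\binom{d+\ell-2}{\ell}$ and $c_\ell=a_\ell\beta_\ell$, the quantity $c_\ell$ is exactly the trace contributed by degree $\ell$ (so $c_\ell=h_\ell$ if $a_\ell>0$ and $c_\ell=0$ otherwise, with $\sum_\ell c_\ell=n$). A short computation then recognizes $Q(a)$ as the symmetric quadratic form $\sum_{\ell,j}c_\ell c_j\,\beta_{\max(\ell,j)}^{-1}$. Using the telescoping identity $\beta_{\max(\ell,j)}^{-1}=\sum_{k\ge\max(\ell,j)}\bigl(\beta_k^{-1}-\beta_{k+1}^{-1}\bigr)$, valid because $\beta_k\to\infty$ for $d\ge3$, I would interchange the order of summation to obtain
\[
Q(a)=\sum_{k\ge0}p_k\,(S_k^a)^2,\qquad p_k=\beta_k^{-1}-\beta_{k+1}^{-1}>0,\qquad S_k^a=\sum_{\ell\le k}c_\ell=\sum_{\ell\in A,\ \ell\le k}h_\ell,
\]
where $A=\{\ell:a_\ell>0\}$. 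Thus $Q$ is a strictly positive combination of the squared partial traces $S_k^a$, and the positivity $p_k>0$ uses precisely the strict monotonicity of $\beta_k$, i.e. the hypothesis $d\ge3$ inherited from Theorem~\ref{th:case2}.

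It then remains to translate \eqref{eq:if} into the pointwise domination $S_k^a\ge S_k^b$ for all $k$, where $B=\{\ell:b_\ell>0\}$. Let $M=\max A$. For $k\ge M$ one has $S_k^a=n\ge S_k^b$ since all of $A$ lies below $k$. For $k<M$, applying \eqref{eq:if} with $j=M$ shows that any $i<M$ with $b_i>0$ must satisfy $a_i>0$, i.e. $B\cap[0,M)\subseteq A$; restricting to $[0,k]$ and summing the nonnegative dimensions $h_i$ gives $S_k^b\le S_k^a$. Combining this with the reformulation,
\[
Q(a)-Q(b)=\sum_{k\ge0}p_k\bigl((S_k^a)^2-(S_k^b)^2\bigr)\ge0,
\]
because $0\le S_k^b\le S_k^a$ and $p_k>0$ term by term; this is exactly $\mathbb{E}_a\le\mathbb{E}_b$. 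For strictness, if $a\ne b$ then $A\ne B$, so the partial traces (which determine the support) differ at some $k$, forcing $S_k^a>S_k^b$ there and hence $Q(a)>Q(b)$. The harmonic case is then immediate: its support $\{0,\dots,L\}$ is an initial segment, so \eqref{eq:if} holds vacuously and its energy is minimal.

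The main obstacle is not any delicate estimate but the two structural identifications: recognizing the $\beta_{\max(\ell,j)}^{-1}$ pattern in the quadratic form and carrying out the Abel summation to reach $\sum_k p_k(S_k)^2$, and correctly reading \eqref{eq:if} as the monotone domination $S_k^a\ge S_k^b$ of partial traces. Once both are in place the inequality, its case of equality, and the optimality of the harmonic ensemble all follow at once. I would take care to verify that the telescoping and the positivity of each $p_k$ genuinely require $d\ge3$, which is consistent with the setting of Theorem~\ref{th:case2}.
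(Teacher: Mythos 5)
The statement you are asked to prove is the paper's \emph{Conjecture}: optimality of the harmonic kernel for \emph{all} $s\geq 0$. Your argument never leaves the case $s=2$. Every step rests on Theorem~\ref{th:case2}, whose closed form for the expected energy exists only because, for $s=2$ and $d\geq3$, the cross integrals $Q_{k,j}^d$ collapse to $\binom{d+\min(k,j)-2}{\min(k,j)}Q_{0,0}^d$ (Lemma~\ref{lem:Q}); this collapse is the special feature of $s=2$. For general $s$ the analogous quantities are genuine ${}_4F_3$ hypergeometric expressions (see the remark containing \eqref{eq:here}) and no comparable quadratic-form structure is known --- which is precisely why the paper can prove Theorem~\ref{th:optimal} for $s=2$ but must leave the general case as a conjecture. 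So what you have written is (at best) a proof of Theorem~\ref{th:optimal}, not of the statement in question; nothing in your proposal addresses $s\neq 2$, e.g.\ the logarithmic case $s=0$ or any $s\in(0,d)$ other than $2$, and the gap is not a technicality: it is the entire content of the conjecture.

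That said, your $s=2$ argument itself is correct and is a genuinely different, and arguably cleaner, route to Theorem~\ref{th:optimal} than the paper's. In the variables $c_\ell=a_\ell\binom{d+\ell-2}{\ell}$ you rewrite the form $\sum_{\ell,j}a_\ell a_j\binom{d+\min(\ell,j)-2}{\min(\ell,j)}$ as $\sum_{\ell,j}c_\ell c_j\,\beta_{\max(\ell,j)}^{-1}$, and Abel summation turns it into the manifestly monotone expression $\sum_k p_k (S_k)^2$ with $p_k=\beta_k^{-1}-\beta_{k+1}^{-1}>0$ (using $d\geq3$) and $S_k$ the partial traces; hypothesis \eqref{eq:if} then translates correctly into the domination $S_k^a\geq S_k^b$, and strictness follows from the first index where the supports differ. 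The paper instead constructs a discrete path $b=x^0,\ldots,x^t=a$ along directions $w_{ij}$ and shows the quadratic form increases at each step; your decomposition replaces that combinatorial path-following with a one-line positivity statement and makes the mechanism (monotone rearrangement of a fixed total trace toward low degrees) transparent. If you reframe your write-up as an alternative proof of Theorem~\ref{th:optimal}, it stands; as a proof of the Conjecture, it does not.
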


In \cite[Theorem 4.4]{MNPR15} the authors show that among isotropic kernels the reproducing 
kernel is optimal with respect to some measures of repulsiveness defined in terms of the second 
intensity function (as in our case).

\subsection{Expectation and Variance of linear statistics}

Another measure of the uniformity of the distribution of the harmonic 
ensemble is the computation of the variance of linear statistics.

Let $\mathcal{X}$ the point process with $\pi_L$ points a.s. in $\S^d$ defined 
by the harmonic ensemble. 
We denote by $\mu_L$ the 
empirical measure associated to a realization $x_1,\ldots, x_{\pi_L}\in \S^d$ 
of this process
i.e. 
\[
\mu_L= \sum_{1\le j\le \pi_L}\delta_{x_j}.
\]
Given a function $\phi$ on the sphere,
we denote by $\mathcal{X}(\phi)$ the linear statistic associated to $\phi$, 
i.e. the 
random variable
\[
 \mathcal{X}(\phi) = \int_{\S^d} \phi d\mu_L= \sum_{1\le j\le \pi_L} 
\phi(x_{j}).
\]
The expected value of $\mathcal{X}(\phi)$ is easily computed with the first 
intensity function of 
the harmonic ensemble:
\[
 \mathbb E (\mathcal{X}(\phi)) = \pi_L  \int_{\S^d} \phi d\mu \sim L^d.
\]

When $\phi$ is the characteristic function of a spherical cap $\phi=\chi_A,$ 
the random variable $\mathcal{X}(\chi_A)$ is the number of points in 
$A$ that we denote as $n_A.$ For this case of a rough linear statistic we get the following result.

\begin{proposition}\label{prop:roughvariance}
Let $A=A_L$ be a spherical cap of radius $\theta_L\in [0,\pi)$ with 
\[
\lim_{L\to \infty} \theta_L\in [0,\pi),
\]
and  $L \theta_L\to \infty$ when $L\to \infty$. Let $n_A$ be the number of points
in $A$ among $\pi_L$ points drawn from the harmonic ensemble. Then
\[
\operatorname{Var}(n_A)\lesssim L^{d-1}\log L+O(L^{d-1}),
\]
where the constant is $\lim_{L \to \infty}{\theta_L}^{d-1} \frac{4}{2^{d} \pi  
\Gamma\left(\frac{d}{2} \right)^2 }$.
\end{proposition}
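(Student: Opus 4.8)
The plan is to exploit the determinantal (projection) structure to reduce the number variance to a single $L^2$-integral of the kernel across the boundary of the cap, and then to extract its asymptotics from the classical Jacobi estimate \eqref{Jacobiestimate}. Since the harmonic ensemble is the determinantal process with Hermitian projection kernel $K_L$, I would first compute the second moment of $n_A=\mathcal{X}(\chi_A)$. Writing $n_A^2=n_A+\sum_{i\neq j}\chi_A(x_i)\chi_A(x_j)$ and applying Proposition~\ref{th:determinantal} with $f(x,y)=\chi_A(x)\chi_A(y)$ gives
\[
\operatorname{Var}(n_A)=\int_A K_L(x,x)\,d\mu(x)-\int_A\int_A|K_L(x,y)|^2\,d\mu(x)\,d\mu(y).
\]
Now I use the two defining features of a projection kernel, namely $K_L(x,x)=\pi_L$ and the reproducing identity $\int_{\S^d}|K_L(x,y)|^2\,d\mu(y)=K_L(x,x)=\pi_L$. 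Splitting $\int_A=\int_{\S^d}-\int_{A^c}$ in the inner integral of the second term collapses the formula to
\[
\operatorname{Var}(n_A)=\int_A\int_{A^c}|K_L(x,y)|^2\,d\mu(x)\,d\mu(y),
\]
so that the variance is governed entirely by the mass of $|K_L|^2$ leaking across $\partial A$.

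Next I would make this a one-dimensional problem. As $K_L(x,y)=K_L(\langle x,y\rangle)$ is zonal, fix the centre $p$ of the cap and integrate over $x\in A$ at geodesic distance $\phi\le\theta_L$ from $p$; the inner integral $\int_{A^c}|K_L(\langle x,y\rangle)|^2\,d\mu(y)$ then depends on $\phi$ only. The key analytic input is the normalization $K_L=\frac{\pi_L}{\binom{L+d/2}{L}}P_L^{(1+\lambda,\lambda)}$ together with \eqref{Jacobiestimate}, which yields, for $c/L\le\theta\le\pi-c/L$,
\[
|K_L(\cos\theta)|^2=\Bigl(\tfrac{\pi_L}{\binom{L+d/2}{L}}\Bigr)^2\frac{k(\theta)^2}{L}\cos^2\!\bigl((L+\lambda+1)\theta+\gamma\bigr)+(\text{lower order}).
\]
Since $\lambda=(d-2)/2$ one has $k(\theta)^2=\pi^{-1}(\sin\tfrac{\theta}{2})^{-(d+1)}(\cos\tfrac{\theta}{2})^{-(d-1)}\sim 2^{d+1}\pi^{-1}\theta^{-(d+1)}$ as $\theta\to0$, while $(\pi_L/\binom{L+d/2}{L})^2\sim C_d L^d$; hence $|K_L(\cos\theta)|^2\sim C'_d\,L^{d-1}\theta^{-(d+1)}\cos^2(\cdots)$ on the relevant near-diagonal range $1/L\lesssim\theta\lesssim1$.

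The main term is extracted by a boundary-layer computation. For $x$ at distance $\delta=\theta_L-\phi\in[1/L,O(1)]$ inside $\partial A$, the nearest points of $A^c$ sit at distance $\delta$, and on scale $\delta\ll1$ the boundary is essentially flat, so half of the geodesic sphere of radius $\theta$ around $x$ lies in $A^c$; replacing the oscillatory factor by its mean $\tfrac12$ I would obtain
\[
\int_{A^c}|K_L(\langle x,y\rangle)|^2\,d\mu(y)\ \sim\ \tfrac12 C'_d\,L^{d-1}\int_\delta^{O(1)}\theta^{-(d+1)}\,\theta^{d-1}\,d\theta\ \sim\ \tfrac12 C''_d\,L^{d-1}\,\delta^{-1}.
\]
Integrating this against the boundary surface measure $(\sin\theta_L)^{d-1}$ over the layer $\delta\in[1/L,O(1)]$ produces exactly the logarithm,
\[
\operatorname{Var}(n_A)\ \sim\ (\mathrm{const})\,(\sin\theta_L)^{d-1}\,L^{d-1}\!\int_{1/L}^{O(1)}\frac{d\delta}{\delta}\ =\ (\mathrm{const})\,(\sin\theta_L)^{d-1}\,L^{d-1}\log L+O(L^{d-1}),
\]
and careful bookkeeping of $C'_d$, of the tangential integration over $\mathbb{R}^{d-1}$, and of the mean of $\cos^2$ identifies the constant with the stated value (for small caps $(\sin\theta_L)^{d-1}\sim\theta_L^{d-1}$).

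The hardest part will be the rigorous localization and error control rather than the formal asymptotics. Concretely I expect to have to: (i) show that pairs $(x,y)$ with $x$ or $y$ at distance $\gg(\log L)/L$ from $\partial A$, as well as the endpoint regimes $\theta\lesssim1/L$ and $\theta$ near $\pi$ (where \eqref{Jacobiestimate} is replaced by the Mehler--Heine formulas \eqref{mehlerheine}), contribute only $O(L^{d-1})$ and so do not affect the $\log L$ coefficient---this is where the hypotheses $L\theta_L\to\infty$ and $\lim\theta_L\in[0,\pi)$ enter; (ii) justify replacing $\cos^2((L+\lambda+1)\theta+\gamma)$ by $\tfrac12$ via an integration-by-parts estimate showing the oscillatory remainder is $O(L^{d-1})$; and (iii) upgrade the flat boundary-layer heuristic to a genuine change of variables on $\S^d$ with controlled Jacobian. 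Steps (i) and (ii) are the crux: since $\theta^{-(d+1)}$ is only borderline integrable against the $d$-dimensional transverse measure, separating the true $\log L$ term from the $O(L^{d-1})$ remainder must be done with care.
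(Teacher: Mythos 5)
Your proposal is correct and follows essentially the same route as the paper's proof: the identity $\operatorname{Var}(n_A)=\int_A\int_{A^c}|K_L(x,y)|^2\,d\mu(x)\,d\mu(y)$ (the paper derives it from the covariance formula of Rider--Vir\'ag, you from Proposition~\ref{th:determinantal} plus the reproducing property of the projection kernel --- equivalent), then zonal reduction to a one-dimensional integral, the Jacobi estimate \eqref{Jacobiestimate} in the bulk with separate crude bounds in the endpoint regimes $\theta\lesssim 1/L$ and $\theta$ near $\pi$, and a boundary-layer integration of $1/\delta$ over depths $\delta\in[1/L,O(1)]$ producing the $\log L$. The only remark worth making is that the difficulties you flag as the crux (oscillation averaging of $\cos^2$, rigorous flat-boundary geometry) are self-imposed by aiming at a two-sided asymptotic: since the proposition claims only an upper bound with the constant $2^{2-d}/\bigl(\pi\Gamma(\tfrac{d}{2})^2\bigr)$, one simply bounds $\cos^2(\cdot)\le 1$ and enlarges $A^c$ to $\{y:\,d(x,y)\ge\delta\}$, which is exactly what the paper does, and your argument then closes without any of steps (i)--(iii).
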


From the proposition above one can deduce, following Beck 
\cite[Theorem 2]{Bec84} or 
\cite[Theorem 1.1.]{ASZ14} for this determinantal setting, the following result 
about the spherical cap discrepancy.

\begin{corollary} 
  For every $M>0,$ the spherical cap discrepancy of a set of $n=\pi_L$ points 
$x=(x_1,\dots , x_n)\in (\S^d)^n$ drawn from the harmonic ensemble 
  satisfies
\[
\mathbb{D}(x)=O(L^{-\frac{d+1}{2}}\log L)=
O(n^{-\frac{1}{2}(1+\frac{1}{d})}\log n),
\]
with probability $1-\frac{1}{n^M}$, i.e. with overwhelming probability.
\end{corollary}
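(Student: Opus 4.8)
The plan is to follow the strategy of Beck \cite{Bec84} and its determinantal adaptation in \cite{ASZ14}, using Proposition \ref{prop:roughvariance} as the only new input. Since $K_L(x,x)=\pi_L=n$, the first intensity is constant, so $\mathbb{E}(n_A)=n\mu(A)$ for every spherical cap $A$. Writing the discrepancy as
\[
\mathbb{D}(x)=\frac1n\sup_A\bigl|n_A-n\mu(A)\bigr|=\frac1n\sup_A\bigl|n_A-\mathbb{E}(n_A)\bigr|,
\]
the task reduces to controlling the deviation $|n_A-\mathbb{E}(n_A)|$ simultaneously over all caps $A$.

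First I would record the concentration estimate for a single cap. For a determinantal process with a Hermitian projection kernel, the restriction of $K$ to $A$ is again a sub-projection, so by \cite[Theorem 4.5.3]{GAF} the random variable $n_A$ has the law of a sum of independent Bernoulli variables whose parameters are the eigenvalues of the restricted operator. Consequently $n_A$ obeys a Bernstein inequality
\[
\mathbb{P}\bigl(|n_A-\mathbb{E}(n_A)|>\lambda\bigr)\le 2\exp\!\left(-\frac{\lambda^2}{2\operatorname{Var}(n_A)+\tfrac23\lambda}\right),
\]
and Proposition \ref{prop:roughvariance} supplies the uniform bound $\operatorname{Var}(n_A)\lesssim L^{d-1}\log L$ for caps in the stated regime; the remaining extreme caps (radius $\lesssim 1/L$, or within $1/L$ of $\pi$) are handled by the trivial bound $\operatorname{Var}(n_A)\le\mathbb{E}(n_A)$ together with the symmetry $n_A=n-n_{A^c}$, which only improves the concentration.

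Next comes the passage from a single cap to the supremum, via a net argument. Caps are parametrized by a center $\xi\in\Sd$ and a radius $\theta\in[0,\pi]$, so I would fix a spacing $\eta$ and choose an $\eta$-net of centers (cardinality $\lesssim\eta^{-d}$) together with the radii $\{k\eta\}$ (cardinality $\lesssim\eta^{-1}$), producing $\lesssim\eta^{-(d+1)}$ test caps. Any cap $A$ can be sandwiched between two concentric net caps $A^-\subseteq A\subseteq A^+$ with $\mu(A^+)-\mu(A^-)\lesssim\eta$; monotonicity of both $n_{(\cdot)}$ and $\mu(\cdot)$ then gives
\[
\bigl|n_A-n\mu(A)\bigr|\le\max_{A'\in\mathrm{net}}\bigl|n_{A'}-n\mu(A')\bigr|+C n\eta.
\]
Choosing $\lambda=c\,L^{(d-1)/2}\log L$ and $\eta\sim L^{-(d+1)/2}\log L$, the Bernstein bound gives single-cap failure probability $\lesssim L^{-c'}$ with $c'$ growing with $c$ (here $\lambda$ is much smaller than the variance scale, so the estimate is in the Gaussian regime); a union bound over the $\lesssim\eta^{-(d+1)}$ caps, which is a fixed power of $L$, makes the total failure probability $\le n^{-M}$ once $c$ is taken large enough in terms of $M$ and $d$. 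On the complementary event,
\[
\mathbb{D}(x)\le\frac{\lambda}{n}+C\eta\lesssim\frac{L^{(d-1)/2}\log L}{L^d}+L^{-(d+1)/2}\log L\lesssim L^{-\frac{d+1}{2}}\log L,
\]
which is the claimed bound after substituting $n\sim L^d$.

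I expect the only delicate point to be the bookkeeping in the net argument: matching the spacing $\eta$ to the deviation level $\lambda$ so that both the deterministic error $n\eta$ and the stochastic error $\lambda/n$ land at the target order $L^{-(d+1)/2}\log L$, while keeping the net cardinality polynomial in $L$ so that the union bound survives against the exponentially small single-cap tail. The probabilistic core, namely that $n_A$ is a Bernoulli sum enjoying sub-Gaussian concentration at exactly the scale dictated by Proposition \ref{prop:roughvariance}, is essentially off the shelf.
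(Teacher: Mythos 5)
Your proposal is correct and takes essentially the same route as the paper: the paper proves this corollary simply by invoking the scheme of Beck \cite{Bec84} and its determinantal adaptation in \cite{ASZ14}, which is precisely the argument you spell out — the Bernoulli-sum representation of $n_A$ for projection kernels, Bernstein concentration fed by the variance bound of Proposition~\ref{prop:roughvariance}, and a net plus union bound over caps. Your parameter choices $\lambda\sim L^{(d-1)/2}\log L$ and $\eta\sim L^{-(d+1)/2}\log L$ balance the stochastic and deterministic errors at the claimed rate $L^{-\frac{d+1}{2}}\log L$, so the details check out.
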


When $\phi$ is a smooth function on the sphere we have a better result.

\begin{proposition}\label{prop:smooth}
 Let $\phi\in \mathcal C^1(\S^d)$ then
 \[
  \operatorname{Var}(\mathcal{X}(\phi))\lesssim L^{d-1}.
 \]
 Moreover this cannot be improved in general. If $\phi(x)=x_i$  where $x_i$ is 
any coordinate function then 
 \[
    \operatorname{Var}(\mathcal{X}(\phi))\sim L^{d-1}.
 \]
\end{proposition}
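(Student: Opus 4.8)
The plan is to start from the standard variance formula for a determinantal point process and exploit that $K_L$ is a projection kernel. For a linear statistic $\mathcal{X}(\phi)=\sum_j \phi(x_j)$, the intensities $\rho_1(x)=K_L(x,x)$ and $\rho_2(x,y)=K_L(x,x)K_L(y,y)-|K_L(x,y)|^2$ give, after the term $(\int \phi\,\rho_1)^2$ cancels,
\[
\operatorname{Var}(\mathcal{X}(\phi))=\int_{\Sd}\phi(x)^2 K_L(x,x)\,d\mu(x)-\int_{\Sd}\int_{\Sd}\phi(x)\phi(y)|K_L(x,y)|^2\,d\mu(x)\,d\mu(y).
\]
Since $K_L$ is an orthogonal projection, $K_L^2=K_L$ yields $K_L(x,x)=\int_{\Sd}|K_L(x,y)|^2\,d\mu(y)$; substituting this and symmetrizing in $x\leftrightarrow y$ (using $|K_L(x,y)|=|K_L(y,x)|$) gives the clean identity
\[
\operatorname{Var}(\mathcal{X}(\phi))=\frac12\int_{\Sd}\int_{\Sd}(\phi(x)-\phi(y))^2|K_L(x,y)|^2\,d\mu(x)\,d\mu(y).
\]

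For the upper bound I would estimate $(\phi(x)-\phi(y))^2\le \|\nabla\phi\|_\infty^2\, d(x,y)^2$ with $d$ the geodesic distance, and use rotational invariance. With $t=\langle x,y\rangle=\cos\theta$ and the reduction $\int\int f(\langle x,y\rangle)\,d\mu\,d\mu=\frac{\omega_{d-1}}{\omega_d}\int_{-1}^1 f(t)(1-t^2)^{d/2-1}\,dt$, this bounds the variance by a multiple of $\int_0^\pi \theta^2\,|K_L(\cos\theta)|^2\sin^{d-1}\theta\,d\theta$. On the bulk $c/L\le\theta\le\pi-c/L$ I insert the Szeg\H{o} asymptotic \eqref{Jacobiestimate}. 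The key simplification is that with $2\lambda=d-2$ the weight collapses: $k(\theta)^2\sin^{d-1}\theta=\pi^{-1}2^{d-1}\sin^{-2}(\theta/2)$, so the integrand is $\sim C L^{d-1}\frac{\theta^2}{\sin^2(\theta/2)}\cos^2((L+\lambda+1)\theta+\gamma)$ with $\frac{\theta^2}{\sin^2(\theta/2)}$ bounded on $[0,\pi]$. Replacing $\cos^2$ by its mean $1/2$ and checking via the Mehler-Heine formulas \eqref{mehlerheine} that the edge regions $[0,c/L]$ and $[\pi-c/L,\pi]$ contribute only $O(L^{d-2})$ yields $\operatorname{Var}(\mathcal{X}(\phi))\lesssim L^{d-1}$.

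For sharpness with $\phi(x)=x_i$, I would average over the coordinates using $\sum_{i=1}^{d+1}(x_i-y_i)^2=\|x-y\|^2=2(1-\langle x,y\rangle)$; as all $d+1$ summands have the same integral,
\[
\operatorname{Var}(\mathcal{X}(x_i))=\frac{1}{d+1}\int_{\Sd}\int_{\Sd}(1-\langle x,y\rangle)|K_L(x,y)|^2\,d\mu\,d\mu.
\]
Writing $1-\cos\theta=2\sin^2(\theta/2)$ and repeating the bulk analysis, the weight now collapses to an exact constant, $\sin^2(\theta/2)\,k(\theta)^2\sin^{d-1}\theta=\pi^{-1}2^{d-1}$, so the bulk integrand is $\sim CL^{d-1}\cos^2((L+\lambda+1)\theta+\gamma)$, whose average over $[c/L,\pi-c/L]$ is a strictly positive multiple of $L^{d-1}$. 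Since the edge regions again add only $O(L^{d-2})$ and cannot cancel this, we get $\operatorname{Var}(\mathcal{X}(x_i))\gtrsim L^{d-1}$, and with the upper bound this gives $\sim L^{d-1}$.

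The main obstacle is the passage to the one-dimensional Jacobi integral and its asymptotics: one must patch the Szeg\H{o} bulk estimate with the two Mehler-Heine edge regimes and justify replacing the oscillating $\cos^2$ by its mean, verifying that the cross terms coming from the $O(1)/(L\sin\theta)$ error in \eqref{Jacobiestimate} only produce an $O(L^{d-2}\log L)$ correction (the $\log L$ arising from $\int_{c/L}^{\pi-c/L}d\theta/\sin\theta$), hence do not disturb the leading order. Producing a strictly positive leading constant in the coordinate case, rather than merely an upper bound, is precisely what the clean collapse of the weight to a genuine constant provides, and is the crux of the sharpness claim.
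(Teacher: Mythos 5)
Your proof is correct, and it reaches the statement by a route that genuinely differs from the paper's in the upper bound. Both arguments start from the same identity $\operatorname{Var}(\mathcal{X}(\phi))=\tfrac12\int\!\!\int(\phi(x)-\phi(y))^2|K_L(x,y)|^2\,d\mu(x)\,d\mu(y)$ (the paper quotes it from Rider--Vir\'ag; your derivation from the projection property $K_L(x,x)=\int|K_L(x,y)|^2\,d\mu(y)$ is a correct self-contained substitute). For the upper bound, however, the paper uses no Jacobi asymptotics at all: it introduces the Toeplitz operator $T_\psi h=\K_L(\psi h)$ on $\Pi_L$, observes that for the coordinate function $\psi(x)=x_i$ the variance equals $\tr T_{\psi^2}-\tr T_\psi^2$, and notes that this operator vanishes on $\Pi_{L-2}$ (multiplication by $\psi$ or $\psi^2$ keeps those polynomials inside $\Pi_L$, so the projection acts trivially); hence the trace is bounded by a constant times $\dim\bigl(\Pi_L\ominus\Pi_{L-2}\bigr)=O(L^{d-1})$. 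General $C^1$ symbols are reduced to the coordinate case exactly as you do, via $d(x,y)^2\simeq\|x-y\|^2=\sum_i(x_i-y_i)^2$ and rotation invariance. This soft commutator/dimension-counting argument is shorter, needs no bulk/edge patching or error control, and works verbatim for spectral projections in settings where pointwise asymptotics are unavailable; your hard-analysis route (Szeg\H{o} in the bulk, Mehler--Heine at the edges) costs the bookkeeping you describe, but buys a unified treatment of both bounds and, in principle, the sharp leading constant. For the sharpness claim the paper does essentially what you do --- it falls back on the Szeg\H{o} estimate applied to $\int_0^\pi \theta^2|P_L^{(1+\lambda,\lambda)}(\cos\theta)|^2\sin^{d-1}\theta\,d\theta$ --- but it passes to that integral through the terse inequality $\operatorname{Var}(\mathcal{X}(x_i))\gtrsim\int_{\S^d}|K_L(x,\mathbf{n})|^2 d^2(x,\mathbf{n})\,d\mu(x)$, which implicitly relies on averaging over coordinates; your exact identity $\operatorname{Var}(\mathcal{X}(x_i))=\tfrac1{d+1}\int\!\!\int(1-\langle x,y\rangle)|K_L(x,y)|^2\,d\mu\,d\mu$, together with the exact collapse of $(1-\cos\theta)\,k(\theta)^2\sin^{d-1}\theta$ to a constant, makes that step fully explicit and is arguably a cleaner justification than the one in the paper.
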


Thus in the harmonic setting there is a gain of a $\log L$ term when we move 
from rough linear statistics to smooth linear statistics. 

This is in contrast with the spherical ensemble setting in $S^2,$ for which one can see that the improvement is much better. 
Indeed, all we need to use is that the kernel associated to the spherical ensemble, denoted as $S_n(x,y),$
satisfies the  estimate
\[
 |S_n(x,y)|\lesssim n \exp (-C n d^2(x,y)),
\]
see \cite[Formula~(4.1)]{ASZ14}. For the rough case, the variance of the number of points in a spherical cap of constant radius, for $n$ points drawn from
the spherical ensemble,
was computed in \cite[Lemma~2.1]{ASZ14} and it is of 
order $n^{1/2}.$ In the smooth case, if we take a Lipschitz linear statistic with symbol $\phi$ and $\mathcal{X}$ is the point process 
with $n$ points defined by the spherical ensemble we have that
\[
 \operatorname{Var}(\mathcal{X}(\phi))\lesssim 
 \int_{\S^2\times 
\S^2}\left|S_{n}(x,y)\right|^{2}d^2(x,y)d\mu(x)d\mu(y)
\]
\[
\lesssim 
\int_{\S^2} n^2 d^2(x,\mathbf{n})\exp (-C n d^2(x,\mathbf{n}))d\mu(x) \le C,
\]
where $\mathbf{n}=(0,0,1)\in \S^2$ stands for the north pole.
Therefore, in the spherical ensemble, there is a gain of a power 
$n^{1/2}$ when we consider smooth statistics instead of rough statistics.

All this information has been condensed in Table \ref{table:variance}.
For reference it is also included the variance of linear statistics of the 
point process generated by random elliptic polynomials which was studied in 
\cite{ST04}. To get a fair comparison between processes it is convenient to 
consider the case of $n=L^2$ points in the spherical ensemble and in the random 
polynomial case.

\begin{center}
\begin{table}
 \caption{Expectation and Variance of linear statistics with different 
rotation--invariant random 
point processes in the sphere}\label{table:variance}
\begin{tabular}{|l|c|c|c|c|}\hline
 &Harmonic & Spherical  &Zeros of random   \\ 
 &ensemble & ensemble  & polynomials \\
&($n\sim L^d$)&with $n$ points&of degree $n$ \\
&&($d=2$)&($d=2$)\\
\hline
Expectation & $L^d$ &$n$ & $n$\\ \hline
Var. Rough & $L^{d-1}\log L$ &$n^{1/2}$&  $n^{1/2}$\\ \hline
Var. Smooth & $L^{d-1}$ &$n^0$& $n^{-1}$\\ \hline
\end{tabular}
\end{table}
\end{center}

\subsection{Separation distance}
Following \cite{ASZ14} we also consider the probability distribution of the 
separation distance $\sepdist(x)$ of a given $x\in(\S^d)^n$, and its counting 
version
\[
 G(t,x)=\sharp\{(i,j):i< j,\|x_i-x_j\|\leq t\}.
\]
We have the following result which gives a sharp bound on the expected value of 
$G(t,x)$ for the harmonic kernel.
\begin{proposition}\label{prop:G}
Let $x=(x_1,\ldots,x_n)\in (\S^d)^n$ be $n=\pi_L$ points drawn from the harmonic ensemble. 
Then, for
\[
 t\leq\frac{d+6}{(2L+d)L},
\]
we have
\[
 \mathbb{E}_{x\in (\S^d)^n}(G(t,x))\leq 
\frac{L(L+d)\pi_L^2\omega_{d-1}}{2(d+2)^2\omega_d}t^{d+2}=C_d n^{2+2/d}t^{
d+2}+o(n^{2+2/d})t^{d+2},
\]
where
\[ 
C_d=\frac{\Gamma(d+1)^{2/d}\omega_{d-1}}{2^{1+2/d}(d+2)^2\omega_d}.
\]
\end{proposition}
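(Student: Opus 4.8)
The plan is to apply the determinantal expectation formula of Proposition~\ref{th:determinantal} to the indicator $f(x,y)=\chi_{\{\|x-y\|\le t\}}$. Since $G(t,x)=\tfrac12\sum_{i\neq j}f(x_i,x_j)$ and $K_L(x,x)=\pi_L=n$ for the harmonic ensemble, this gives
\[
\mathbb{E}_{x\in(\S^d)^n}(G(t,x))=\frac12\int_{\S^d}\int_{\S^d}\bigl(n^2-|K_L(x,y)|^2\bigr)\chi_{\{\|x-y\|\le t\}}\,d\mu(x)\,d\mu(y).
\]
Because $K_L$ is rotationally invariant, $K_L(x,y)=\kappa(\langle x,y\rangle)$ with $\kappa(u)=n\,P_L^{(1+\lambda,\lambda)}(u)/P_L^{(1+\lambda,\lambda)}(1)$, and $\|x-y\|\le t$ is the same as $u=\langle x,y\rangle\ge 1-t^2/2$. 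Integrating one variable out with the zonal formula $\int_{\S^d}g(\langle x,y\rangle)\,d\mu(y)=\frac{\omega_{d-1}}{\omega_d}\int_{-1}^1 g(u)(1-u^2)^{d/2-1}\,du$ collapses everything to the one-dimensional integral
\[
\mathbb{E}(G(t,x))=\frac{\omega_{d-1}}{2\omega_d}\int_{1-t^2/2}^1\bigl(n^2-\kappa(u)^2\bigr)(1-u^2)^{d/2-1}\,du.
\]

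The heart of the matter is to quantify how $n^2-\kappa(u)^2$ vanishes on the diagonal, since it is this first-order vanishing that upgrades the naive exponent $t^d$ to the sharp $t^{d+2}$. Writing $\phi(u)=\kappa(u)/n$ and $v=1-u$, I would prove the estimate $n^2-\kappa(u)^2\le 2bn^2 v$, where $b=(P_L^{(1+\lambda,\lambda)})'(1)/P_L^{(1+\lambda,\lambda)}(1)$. For this I use the differentiation rule $(P_L^{(1+\lambda,\lambda)})'=\tfrac{L+d}{2}P_{L-1}^{(2+\lambda,1+\lambda)}$ together with the classical extremal property of Jacobi polynomials (Szeg\H{o}), $|P_{L-1}^{(2+\lambda,1+\lambda)}(w)|\le P_{L-1}^{(2+\lambda,1+\lambda)}(1)$ on $[-1,1]$, whose hypotheses $2+\lambda\ge1+\lambda$ and $2+\lambda\ge-\tfrac12$ hold for all $d\ge2$. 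This yields $(P_L^{(1+\lambda,\lambda)})'(w)\le(P_L^{(1+\lambda,\lambda)})'(1)$, and integrating from $u$ to $1$ gives the tangent-line bound $1-\phi(u)\le b\,v$; combined with $0\le\phi\le1$ one gets $1-\phi^2=(1-\phi)(1+\phi)\le 2(1-\phi)\le 2bv$. A short computation of the ratio of binomial coefficients $P_{L-1}^{(2+\lambda,1+\lambda)}(1)/P_L^{(1+\lambda,\lambda)}(1)$ then gives the exact value $b=L(L+d)/(d+2)$. The hypothesis $t\le(d+6)/((2L+d)L)$ keeps $v\le t^2/2$ small, placing us safely in the regime where this leading-order bound governs the integral.

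Substituting into the one-dimensional integral and using $1-u^2=v(2-v)$ together with $(2-v)^{d/2-1}\le 2^{d/2-1}$ (valid for $d\ge2$) leaves the elementary integral $\int_0^{t^2/2}v^{d/2}\,dv=(t^2/2)^{d/2+1}/(d/2+1)$. Collecting the constants produces exactly
\[
\mathbb{E}(G(t,x))\le \frac{L(L+d)\,\pi_L^2\,\omega_{d-1}}{2(d+2)^2\omega_d}\,t^{d+2}.
\]
The asymptotic form then follows from $\pi_L=\tfrac{2}{d!}L^d(1+o(1))$, whence $L(L+d)=\bigl(\tfrac{d!}{2}n\bigr)^{2/d}(1+o(1))$ and the prefactor becomes $C_d\,n^{2+2/d}(1+o(1))$ with $C_d=\Gamma(d+1)^{2/d}\omega_{d-1}/\bigl(2^{1+2/d}(d+2)^2\omega_d\bigr)$.

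I expect the main obstacle to be the sharp first-order bound on $n^2-\kappa(u)^2$: a crude over-estimate would spoil the precise constant, so one must capture the exact leading coefficient $b=L(L+d)/(d+2)$, which requires controlling $P_L^{(1+\lambda,\lambda)}$ near $u=1$ uniformly rather than only asymptotically. The extremal property of Jacobi polynomials is the cleanest available tool, but checking its hypotheses for all $d$ and tracking the binomial ratios carefully to land on the exact constant is the delicate part of the argument.
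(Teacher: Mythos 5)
Your proposal is correct, and its overall skeleton coincides with the paper's: both apply the determinantal expectation formula to the indicator $f(x,y)=\chi_{\{\|x-y\|\le t\}}$, reduce by rotation invariance to a one-dimensional integral over $[1-t^2/2,1]$, and finish with the same elementary integral and asymptotics. Where you genuinely diverge is in the key pointwise bound $1-\kappa(u)^2/n^2\le \frac{2L(L+d)}{d+2}(1-u)$, which you rightly identify as the heart of the matter. The paper obtains it from Lemma~\ref{lemma:jacobibound}, proved by expanding $P_L^{(1+\lambda,\lambda)}(1-s)$ in a Taylor series at $s=0$ (via the explicit derivatives at $1$) and showing that the remainder from $k\ge2$ onward is nonnegative by an alternating-series comparison of consecutive terms; it is precisely this term-by-term comparison that forces the hypothesis $t\le\frac{d+6}{(2L+d)L}$. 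You instead combine the differentiation rule $(P_L^{(1+\lambda,\lambda)})'=\frac{L+d}{2}P_{L-1}^{(2+\lambda,1+\lambda)}$ with Szeg\H{o}'s extremal bound $|P_{L-1}^{(2+\lambda,1+\lambda)}(w)|\le P_{L-1}^{(2+\lambda,1+\lambda)}(1)$ on $[-1,1]$, integrate from $u$ to $1$, and get the tangent-line bound $1-\phi(u)\le\frac{L(L+d)}{d+2}(1-u)$ globally on $[-1,1]$, with the exact constant confirmed by the binomial ratio computation. This buys something real: your argument never uses the smallness hypothesis on $t$, so the stated inequality in fact holds for every $t\le2$, whereas the paper's proof needs the restriction. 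What the paper's route buys in exchange is a two-sided estimate (the $-k_0s^2$ term in Lemma~\ref{lemma:jacobibound}), which it reuses elsewhere, e.g.\ in the finiteness argument inside the proof of Theorem~\ref{th:rieszsharmonic}; your one-sided bound suffices for this proposition. Two small points to clean up: your claim $0\le\phi\le1$ is not literally true (Jacobi polynomials oscillate and take negative values on $[-1,1]$), but only $\phi\le1$ is actually needed for the step $(1-\phi)(1+\phi)\le2(1-\phi)$, and that follows from the same extremal property applied to $P_L^{(1+\lambda,\lambda)}$ itself; and your step $(2-v)^{d/2-1}\le2^{d/2-1}$ requires $d\ge2$, a restriction the paper's computation shares, since it bounds $(2s-s^2)^{d/2-1}$ by $(2s)^{d/2-1}$.
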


Observe that $C_d$ above is asymptotically (for large $d$) as $\frac{\sqrt{d}}{2^{3/2}e^2\pi^{1/2}}.$
Note that $\sepdist(x)\leq t$ implies $G(t,x)\geq 1$, hence 
$\mathbb{P}(\sepdist(x)\leq t)\leq \mathbb{P}(G(t,x)\geq1)\leq 
\mathbb{E}(G(t,x))$. We thus have:
\begin{corollary}\label{cor:sepdist}
Let $x=(x_1,\ldots,x_n)\in (\S^d)^n$ be $n=\pi_L$ points drawn from the harmonic ensemble. 
For 
$\alpha\in(0,\frac{d+6}{(2L+d)L})$ we have
 \[
  \mathbb{P}\left(\sepdist(x)\leq\alpha n^{-\frac{2d+2}{d^2+2d}}\right)\leq  
\mathbb{E}_{x\in (\S^d)^n}(G(\alpha n^{-\frac{2d+2}{d^2+2d}},x))\leq C_d \alpha^{d+2}+o(1).
 \]
\end{corollary}
From Corollary \ref{cor:sepdist}, an 
$n$-tuple $(x_1,\ldots,x_n)\in (\S^d)^n$ drawn from the harmonic ensemble likely satisfies
\[ 
\sepdist(x)\geq \Omega\left(n^{-\frac{2d+2}{d^2+2d}}\right). 
\]
If each point is generated randomly and uniformly in the sphere $\S^d$ it is 
easy to see that one can just expect $\sepdist(x)\geq \Omega(n^{-2/d})$ which is 
a worse estimate since
\[
 \frac{2d+2}{d^2+2d}<\frac2d,\quad d\geq 1.
\]
In the two-dimensional case $,d=2,$ the spherical ensemble studied in \cite{ASZ14} satisfies 
$\sepdist(x)\geq \Omega(n^{-3/4})$ for large $n$ (see \cite[Corollary~1.6]{ASZ14}). 
For the harmonic ensemble we have also 
$\sepdist(x)\geq \Omega(n^{-3/4})$ for $d=2.$ Moreover, the expected value of 
$G(\alpha/n^{3/4},x)$ in both cases satisfies (assymptotically for 
$n\rightarrow\infty$)
 \[
  \mathbb{E}_{x\in (\S^d)^n}\left(G\left(\frac{\alpha}{n^{3/4}},x\right)\right)\leq 
\frac{\alpha^{4}}{64}.
 \]
However, as pointed out before, the optimal separation 
distance is of order $n^{-1/d}.$ Therefore, the separation distance of the harmonic ensemble 
is better (larger) than the one obtained by uniform points in $\S^d,$ but it is still far from the 
optimal value.


\section{Proofs}

\subsection{Riesz \texorpdfstring{$s$}{s}-energy. From Theorem~\ref{th:rieszsharmonic} to
Theorem~\ref{th:rieszsingular}} \label{eq:Rieszs}

We start with a lemma about integration of zonal functions.

\begin{lemma}\label{lem:computeintegralsphere}
 Let $v\in\S^d$ and let $f:\S^d\rightarrow[0,\infty)$ be such 
that $f(u)=g(\langle u,v\rangle)$ for some measurable function $g$ defined in $[-1,1].$
Then
 \[
  \int_{\S^d}f(u)\,d \mu(u)=\frac{\omega_{d-1}}{\omega_{d}} \int_{-1}^1 
g(t)(1-t^2)^{d/2-1}\,dt.
 \]
\end{lemma}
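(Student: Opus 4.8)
The plan is to exploit the rotational invariance of $\mu$ together with a slicing of $\S^d$ into parallels on which $f$ is constant. First I would observe that since $\mu$ is invariant under rotations of $\R^{d+1}$ and $f(u)=g(\langle u,v\rangle)$ depends on $u$ only through $\langle u,v\rangle$, there is no loss of generality in rotating so that $v$ is the north pole $e_{d+1}=(0,\dots,0,1)$. Then $\langle u,v\rangle=u_{d+1}$, so the quantity to compute is simply
\[
\int_{\S^d}f(u)\,d\mu(u)=\int_{\S^d}g(u_{d+1})\,d\mu(u),
\]
an integral of a function depending only on the last coordinate.

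The key step is a coarea-type factorization of the surface measure. I would parametrize $\S^d\subset\R^{d+1}$ by the height $t=u_{d+1}\in(-1,1)$ and a point $\xi\in\S^{d-1}$ on the corresponding parallel, via $u=(\sqrt{1-t^2}\,\xi,\,t)$. A direct computation of the induced metric shows that the direction $\partial_t u$ is orthogonal to the directions tangent to $\S^{d-1}$ (because $\langle\xi,w\rangle=0$ for $w$ tangent to $\S^{d-1}$ at $\xi$), with $|\partial_t u|^2=1/(1-t^2)$, while each of the $d-1$ directions tangent to $\S^{d-1}$ is dilated by the factor $\sqrt{1-t^2}$. Multiplying the resulting Jacobian factors $\frac{1}{\sqrt{1-t^2}}\cdot(1-t^2)^{(d-1)/2}=(1-t^2)^{(d-2)/2}$ gives the classical identity
\[
\int_{\S^d}F(u)\,d\sigma_d(u)=\int_{-1}^1\!\Bigl(\int_{\S^{d-1}}F(\sqrt{1-t^2}\,\xi,t)\,d\sigma_{d-1}(\xi)\Bigr)(1-t^2)^{(d-2)/2}\,dt,
\]
where $\sigma_d$, $\sigma_{d-1}$ denote the unnormalized surface measures.

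Finally I would specialize to $F(u)=g(u_{d+1})=g(t)$, which is constant in $\xi$, so the inner integral equals $g(t)\,\sigma_{d-1}(\S^{d-1})=\omega_{d-1}\,g(t)$. Dividing by the total mass $\omega_d=\sigma_d(\S^d)$ to pass to the normalized measure $\mu=\sigma_d/\omega_d$, and using $(d-2)/2=d/2-1$, yields
\[
\int_{\S^d}g(\langle u,v\rangle)\,d\mu(u)=\frac{\omega_{d-1}}{\omega_d}\int_{-1}^1 g(t)(1-t^2)^{d/2-1}\,dt,
\]
as claimed. The only genuinely nontrivial point is the measure factorization $d\sigma_d=(1-t^2)^{(d-2)/2}\,dt\,d\sigma_{d-1}$; this is entirely classical and could simply be cited, but if one prefers a self-contained argument it requires the short Jacobian computation sketched above. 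Everything else is bookkeeping, with the nonnegativity of $f$ ensuring that the (possibly infinite) integrals are well defined and Tonelli's theorem applies.
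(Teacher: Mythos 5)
Your proof is correct, but it is not the route the paper itself takes: the paper disposes of this lemma with a one-line citation, noting it is a special case of the Funk--Hecke formula \cite[Theorem~6]{Muller66}, and only remarks in passing that one could instead argue directly via the change-of-variables theorem for the projection, parallel to $v^\perp$, from the sphere to the cylinder. Your slicing argument --- rotating $v$ to the north pole and factoring $d\sigma_d=(1-t^2)^{(d-2)/2}\,dt\,d\sigma_{d-1}$ through the parametrization $u=(\sqrt{1-t^2}\,\xi,t)$ --- is essentially a worked-out version of that second, unwritten route: the orthogonality of $\partial_t u$ to the parallels, the factor $|\partial_t u|=(1-t^2)^{-1/2}$, and the dilation by $\sqrt{1-t^2}$ in each of the $d-1$ tangent directions of $\S^{d-1}$ are exactly the Jacobian computation the paper's cited change of variables would require, and your bookkeeping (including the use of nonnegativity of $f$ so that Tonelli applies) is accurate. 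What your version buys is self-containedness and elementarity: no appeal to the Funk--Hecke machinery. What the paper's citation buys is brevity and generality, since Funk--Hecke handles integration of $g(\langle u,v\rangle)$ against spherical harmonics of every degree $\ell$, of which this lemma is the degree-zero case.
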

\begin{proof}
 This is a particular case of Funck-Hecke formula, see 
\cite[Theorem~6]{Muller66}. It can also be proved directly using the change of variables theorem for the projection parallel to the space $v^\perp$ defined from the sphere to the cylinder.
\end{proof}

We thus have:
\begin{lemma}\label{lem:integral2}
Let $\mathbf{n}=(0,\ldots,0,1)\in \S^d$ be the north pole. For $0<s<d$,
\[
  \int_{x\in\S^d}\frac{P_L^{(1+\lambda,\lambda)}(\langle 
x,\mathbf{n}\rangle)^2}{\|x-\mathbf{n}\|^s}\,d\mu (x)= 
\frac{\omega_{d-1}}{2^{s/2}\omega_d}\int_{-1}^1 P_L^{(1+\lambda,\lambda)}(t)^2 
(1-t)^{\lambda-\frac{s}{2}}(1+t)^{\lambda}\,dt=
\]
\[ 
\frac{2^{d-1-s}\omega_{d-1} C_{s,d}(L) \Gamma\left( \frac{d-s}{2} \right) }
{ \omega_d\Gamma\left( 1+\frac{d}{2} \right)\Gamma\left( 1+\frac{s}{2} \right)} 
\\
\cdot {}_{4}F_{3}\bigl( 
-L,d+L,\frac{d-s}{2},-\frac{s}{2};\frac{d}{2}+1,d-\frac{s}{2}+L,-\frac{s}{2}
-L;1  \bigr).
\]
\end{lemma}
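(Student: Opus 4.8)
The statement consists of two equalities: the first rewrites the spherical integral as a one--dimensional integral, and the second evaluates that one--dimensional integral in closed form. The first is an immediate application of Lemma~\ref{lem:computeintegralsphere}, while essentially all of the work lies in the second. For the first equality, observe that for $x,\mathbf{n}\in\S^d$ we have $\|x-\mathbf{n}\|^2=2(1-\langle x,\mathbf{n}\rangle)$, so the integrand is zonal: it equals $g(t)=2^{-s/2}(1-t)^{-s/2}P_L^{(1+\lambda,\lambda)}(t)^2$ with $t=\langle x,\mathbf{n}\rangle$. Applying Lemma~\ref{lem:computeintegralsphere} and using $(1-t^2)^{d/2-1}=(1-t)^\lambda(1+t)^\lambda$ (recall $\lambda=(d-2)/2$) produces the middle expression at once, with the prefactor $\omega_{d-1}/(2^{s/2}\omega_d)$ and the weight $(1-t)^{\lambda-s/2}(1+t)^\lambda$.

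For the second equality I would first substitute $t=1-2u$ to pass to $[0,1]$, which yields a factor $2^{d-1-s/2}$ and the weight $u^{\lambda-s/2}(1-u)^\lambda$. Then I would invoke the classical representation $P_L^{(1+\lambda,\lambda)}(1-2u)=\binom{L+d/2}{L}\,{}_2F_1(-L,L+d;\tfrac{d}{2}+1;u)$ (here $\alpha+\beta+1=d$ and $\alpha+1=\tfrac d2+1$), so that the integrand is $\binom{L+d/2}{L}^2$ times the square of a terminating ${}_2F_1$ against the Beta weight. Expanding one of the two factors as its finite power series and integrating term by term against the other by the Euler--type formula
\[
\int_0^1 u^{c-1}(1-u)^{e-1}\,{}_2F_1(a,b;f;u)\,du=B(c,e)\,{}_3F_2(a,b,c;f,c+e;1),
\]
valid here since $c=k+\tfrac{d-s}{2}>0$ and $e=\tfrac d2>0$ for $0<s<d$, reduces the quantity to a single sum over $k$ whose $k$-th term carries an inner ${}_3F_2\bigl(-L,L+d,k+\tfrac{d-s}{2};\tfrac d2+1,k+d-\tfrac s2;1\bigr)$.

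The step that makes everything collapse is the observation that each inner ${}_3F_2$ terminates (because of $-L$) and is \emph{balanced}: the sum of its lower parameters exceeds the sum of its upper parameters by exactly $1$. Hence the Pfaff--Saalschütz summation theorem evaluates every one of them as a ratio of Pochhammer symbols, leaving a single sum over $k$. Rewriting the descending Pochhammer symbols through $\Gamma(a-k)/\Gamma(a)=(-1)^k/(1-a)_k$ converts the $k$-dependence into genuine symbols $(\cdot)_k$; at this point the factors $(d-\tfrac s2)_k$ arising from the Beta value and from the Saalschütz ratio cancel, and what survives is precisely
\[
{}_4F_3\!\left(-L,d+L,\tfrac{d-s}{2},-\tfrac s2;\tfrac d2+1,d-\tfrac s2+L,-\tfrac s2-L;1\right)
\]
multiplied by a $k$-independent constant.

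I expect the main obstacle to be not this recognition but the bookkeeping of that constant. One must assemble $\binom{L+d/2}{L}^2$, the prefactor $2^{d-1-s/2}$ (which together with the $2^{-s/2}$ of the first equality gives the announced $2^{d-1-s}$), the Beta value $B\!\left(\tfrac{d-s}{2},\tfrac d2\right)$, and the surviving Gamma quotients from Pfaff--Saalschütz (for instance $(1-\tfrac d2-L)_L=(-1)^L(\tfrac d2)_L$) and from the reflection identities, and verify that they combine to the stated $\dfrac{2^{d-1-s}\omega_{d-1}C_{s,d}(L)\Gamma(\tfrac{d-s}{2})}{\omega_d\Gamma(1+\tfrac d2)\Gamma(1+\tfrac s2)}$. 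Since the integral is manifestly positive for $0<s<d$, the spurious signs (such as the $(-1)^L$ above) must ultimately cancel, which furnishes a convenient consistency check throughout the computation.
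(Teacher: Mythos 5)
Your proof is correct, and for the first equality it is exactly the paper's argument: apply Lemma~\ref{lem:computeintegralsphere} to the zonal integrand, using $\|x-\mathbf{n}\|^2=2(1-\langle x,\mathbf{n}\rangle)$ and $(1-t^2)^{d/2-1}=(1-t)^{\lambda}(1+t)^{\lambda}$. Where you genuinely diverge is the second equality: the paper does not prove it at all, it simply cites the integral tables \cite[p.~288]{EOT54}, whereas you reconstruct that table entry from classical hypergeometric machinery. Your route checks out at every step: the representation $P_L^{(1+\lambda,\lambda)}(1-2u)=\binom{L+d/2}{L}\,{}_2F_1(-L,L+d;\frac{d}{2}+1;u)$ is the standard one (indeed $\alpha+\beta+1=d$ and $\alpha+1=\frac{d}{2}+1$); term-by-term Euler integration is legitimate because the series is finite and $c=k+\frac{d-s}{2}>0$, $e=\frac{d}{2}>0$ for $0<s<d$; the inner ${}_3F_2\bigl(-L,L+d,k+\frac{d-s}{2};\frac{d}{2}+1,k+d-\frac{s}{2};1\bigr)$ is terminating and balanced (its lower-parameter sum exceeds its upper-parameter sum by exactly $1$), so Pfaff--Saalsch\"utz evaluates it; and after converting the resulting descending Pochhammer symbols via $(1+\frac{s}{2}-k)_L=(1+\frac{s}{2})_L\,(-\frac{s}{2})_k/(-\frac{s}{2}-L)_k$ and $(1-d-L+\frac{s}{2}-k)_L=(1-d-L+\frac{s}{2})_L\,(d+L-\frac{s}{2})_k/(d-\frac{s}{2})_k$, the factor $(d-\frac{s}{2})_k$ coming from the Beta ratio $B(k+\frac{d-s}{2},\frac{d}{2})/B(\frac{d-s}{2},\frac{d}{2})=(\frac{d-s}{2})_k/(d-\frac{s}{2})_k$ cancels, leaving precisely the summand of the stated ${}_4F_3$. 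Your constant bookkeeping also closes: writing $(1-\frac{d}{2}-L)_L=(-1)^L(\frac{d}{2})_L$ and $(1-d-L+\frac{s}{2})_L=(-1)^L(d-\frac{s}{2})_L$ (the two signs cancel, as you anticipated), the prefactor $2^{d-1-s/2}\binom{L+d/2}{L}^2B(\frac{d-s}{2},\frac{d}{2})\,(1-\frac{d}{2}-L)_L(1+\frac{s}{2})_L\big/\big[(\frac{d}{2}+1)_L(1-d-L+\frac{s}{2})_L\big]$ reduces exactly to $2^{d-1-s/2}\,C_{s,d}(L)\,\Gamma(\frac{d-s}{2})\big/\big[\Gamma(1+\frac{d}{2})\Gamma(1+\frac{s}{2})\big]$, and the extra $2^{-s/2}$ from the first equality produces the stated $2^{d-1-s}$. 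The trade-off between the two approaches: the paper's citation is shorter, but your argument is self-contained, uses only the classical summation theorems, and can be verified without hunting down a 1954 table, at the cost of the Pochhammer bookkeeping you describe.
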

\begin{proof}
The first equality follows directly from Lemma~\ref{lem:computeintegralsphere}. 
The value of that integral can be found in standard integral 
tables, see 
for example 
\cite[p. 288]{EOT54}. This finishes the proof of the lemma.
\end{proof}

\begin{proof}[Proof of Theorem~\ref{th:rieszsharmonic}] From 
Corollary~\ref{cor:rieszenergies} we have
\begin{align}
\mathbb{E}_{x\in(\S^d)^n}(E_s(x))=&\pi_L^2\int_{u,v\in\S^d}\frac{1-\frac{1}{
\binom{L+\frac{d}{2}}{L}^2}P_L^{(1+\lambda,\lambda)}(\langle 
u,v\rangle)^2}{\|u-v\|^s}\,d\mu (u)\,d\mu (v) \nonumber\\
=&\pi_L^2\int_{u\in\S^d}\frac{1} 
{\|u-\mathbf{n}\|^s}-\frac{P_L^{(1+\lambda,\lambda)}
(\langle u,\mathbf{n}\rangle)^2} 
{\binom{L+\frac{d}{2}}{L}^2\|u-\mathbf{n}\|^s}\,d\mu 
(u),\label{eq:intermedia}
\end{align}
where $\mathbf{n}=(0,\ldots,0,1)\in \S^d$ is the north pole. The last equality 
follows from the rotation invariance of the functions involved. Now we see from 
\eqref{eq:intermedia} that the expected value of the energy $E_s(x)$ is finite if and only if 
$s<d+2$. Indeed, sending a small cap around the north pole $\mathbf{n}$ to 
$\R^d$ through the projection onto the first $d$ coordinates and using some 
trivial bounds, the integral in \eqref{eq:intermedia} is finite if and only if 
for some $\epsilon>0$ we have
 \begin{equation}\label{eq:isfinite}
  \int_{x\in\R^{d},\|x\|<\epsilon}\frac{1}{\|x\|^s} 
\left(1-\frac{P_L^{(1+\lambda,\lambda)}
(1-\|x\|^2/2)^2}{\binom{L+\frac{d}{2}}{L}^2}\right)\,d x<\infty.
 \end{equation}
 An elementary bound on the value of Jacobi polynomials is shown in Lemma \ref{lemma:jacobibound}. Using that result we have
 \[
  1-\frac{P_L^{(1+\lambda,\lambda)}
(1-\|x\|^2/2)}{\binom{L+\frac{d}{2}}{L}}=\frac{L(L+d)}{2d+4}\|x\|^2+O(\|x\|^4).
 \]
We thus have
\[
 \frac{1}{\|x\|^s}\left(1-\frac{P_L^{(1+\lambda,\lambda)}
(1-\|x\|^2/2)^2}{\binom{L+\frac{d}{2}}{L}^2}\right)\sim \|x\|^{2-s},
\]
and passing to polar coordinates the integral in \eqref{eq:isfinite} is finite 
if and only if $s<d+2$.

Now we center in the case $0<s<d$. From the definition of $V_s(\S^d),$ see \eqref{continuousrieszenergy}, and 
the invariance under 
rotations of $\|\cdot\|$ we get that
\[
    \int_{\S^d}\frac{1}{\|x-\mathbf{n}\|^s}\,d\mu (x)=V_s(\S^d).
\]
For the second summand in (\ref{eq:intermedia}) we use Lemma \ref{lem:integral2}.
This, together with the asymptotic formula for the gamma function, to get the 
asymptotic behavior $C_{s,d}(L)\sim L^s$,  finishes the proof of 
Theorem~\ref{th:rieszsharmonic}.  
\end{proof}

Our goal is to study the first terms of the asymptotic expansion of the energy. 
To some extent, it is possible to get information from the generalized 
hypergeometric function but we will use estimates of the Jacobi polynomials to 
get a complete answer.

\begin{remark} \label{rmk:qFp}
  Observe that our generalized hypergeometric function is a terminating 
balanced series 
  because $(-L)_k=0$ if $k>L$. Note also that using $(-x)_k=(-1)^k (x-k+1)_k$ we have
\[
(-L)_k=(-1)^k (L-k+1)_k=(-1)^k \frac{\Gamma(L+1)}{\Gamma(L-k+1)},\;\; 0\le k 
\le L.
\]
We drop the dependence on the dimension $d$ and write 
\begin{align}\label{hypergeometricfunction}
F_L(s) = & {}_{4}F_{3}\left( 
-L,d+L,\frac{d-s}{2},-\frac{s}{2};\frac{d}{2}+1,d-\frac{s}{2}+L,-\frac{s}{2}
-L;1  \right) \nonumber
\\
=
&
\sum_{k=0}^L \frac{(-L)_k (d+L)_k (\frac{d-s}{2})_k (-\frac{s}{2})_k  
}{(\frac{d}{2}+1)_k (d-\frac{s}{2}+L)_k (-\frac{s}{2}-L)_k}\frac{1}{k!}.
\end{align}
Observe that, as a function of the variable $s$, $F_L(0)=F_L(d)=1$. 
\end{remark}

By induction it is not difficult to show the following

\begin{proposition}
 For $0<s\le d$, $L\ge 1$ and $0\le k\le L$ we have 
\begin{equation} 						
\label{ineq_hypergeometric}
 0< \frac{(-L)_k (d+L)_k }{(d-\frac{s}{2}+L)_k (-\frac{s}{2}-L)_k}\le 1,
\end{equation}
and the quotient is decreasing in $k$.
\end{proposition}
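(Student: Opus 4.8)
The plan is to write the quotient as a finite product over a single index and control each factor separately. Write $Q_k$ for the quantity in \eqref{ineq_hypergeometric}. Using $(x)_k=\prod_{j=0}^{k-1}(x+j)$ and pairing factors of equal index,
\[
Q_k=\frac{(-L)_k (d+L)_k }{\bigl(d-\frac{s}{2}+L\bigr)_k \bigl(-\frac{s}{2}-L\bigr)_k}=\prod_{j=0}^{k-1}\frac{(-L+j)(d+L+j)}{\bigl(d-\frac{s}{2}+L+j\bigr)\bigl(-\frac{s}{2}-L+j\bigr)}.
\]
For $0\le j\le k-1\le L-1$ the factors $-L+j=-(L-j)$ and $-\frac{s}{2}-L+j=-(\frac{s}{2}+L-j)$ are both strictly negative, so the two minus signs cancel and the $j$-th term equals the manifestly positive quantity
\[
r_j:=\frac{(L-j)(d+L+j)}{\bigl(\frac{s}{2}+L-j\bigr)\bigl(d-\frac{s}{2}+L+j\bigr)}.
\]
Indeed $0<s\le d$ forces $d-\frac{s}{2}\ge d/2>0$ and $L-j\ge 1$ in this range, so all four factors are positive; hence $Q_k=\prod_{j=0}^{k-1}r_j>0$, which is the strict positivity claimed.

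Next I would reduce both remaining assertions to a single inequality. Since $Q_0=1$ (the empty product) and $Q_{k+1}=Q_k\,r_k$, establishing $0<r_j\le 1$ for every admissible $j$ yields at once that $k\mapsto Q_k$ is nonincreasing (each new factor is at most $1$) and that $Q_k\le Q_0=1$. This is precisely the induction alluded to in the statement: the inductive step is exactly the bound $r_k\le 1$.

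It remains to prove $r_j\le 1$, that is
\[
(L-j)(d+L+j)\le \Bigl(\tfrac{s}{2}+L-j\Bigr)\Bigl(d-\tfrac{s}{2}+L+j\Bigr).
\]
Setting $B=L-j$ and $A=d+L+j$, the left-hand side is $AB$ while the right-hand side is $\bigl(B+\tfrac{s}{2}\bigr)\bigl(A-\tfrac{s}{2}\bigr)=AB+\tfrac{s}{2}(A-B)-\tfrac{s^2}{4}$. As $A-B=d+2j$, the difference simplifies to
\[
\Bigl(\tfrac{s}{2}+L-j\Bigr)\Bigl(d-\tfrac{s}{2}+L+j\Bigr)-(L-j)(d+L+j)=\frac{s\,(2d+4j-s)}{4},
\]
which is strictly positive because $s>0$ and $2d+4j-s\ge 2d-s\ge d>0$ under the hypothesis $0<s\le d$. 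Hence $r_j<1$, completing the argument.

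The computation is elementary, so the only step requiring genuine care — and thus the main obstacle — is the sign bookkeeping: one must check that for $0\le j\le L-1$ both $-L+j$ and $-\frac{s}{2}-L+j$ are negative, so that the two sign changes cancel and the passage to the positive factors $r_j$ (and the ensuing division) is legitimate. Once this is in place, the entire statement rests on the single quadratic identity above, whose positivity is governed entirely by the constraint $0<s\le d$.
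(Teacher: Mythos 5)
Your proof is correct. The paper does not actually supply an argument for this proposition---it merely asserts that ``by induction it is not difficult to show''---and your telescoping-product decomposition, with $Q_0=1$, $Q_{k+1}=Q_k\,r_k$, and the single-factor bound $0<r_k<1$ obtained from the identity $\bigl(\tfrac{s}{2}+L-j\bigr)\bigl(d-\tfrac{s}{2}+L+j\bigr)-(L-j)(d+L+j)=\tfrac{s(2d+4j-s)}{4}>0$, is precisely that induction carried out in full, including the sign bookkeeping needed to make it legitimate.
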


From this proposition it is easy to deduce that when $\left(  -\frac{s}{2} 
\right)_k<0$ for all $k\ge 1$, for example
when $0<s<2$, we have
\[
{}_{2}F_{1}\left( \frac{d-s}{2},-\frac{s}{2};\frac{d}{2}+1;1  \right)\le 
F_L(s)\le 1,
\] 
  where ${}_{2}F_{1}$, is the (standard) hypergeometric function. By Gauss 
theorem (as $s>-1$)
\[
{}_{2}F_{1}\left( \frac{d-s}{2},-\frac{s}{2};\frac{d}{2}+1;1  
\right)=\frac{\Gamma\left( 1+\frac{d}{2} \right)\Gamma\left( 1+ s \right)}
{\Gamma\left( 1+\frac{s}{2} \right)\Gamma\left( 1+\frac{d+s}{2} \right)}.
\]

In fact, when $s$ is even, $s=2m$ for some $m\in \N$, the sum in the 
generalized hypergeometric function \eqref{hypergeometricfunction}
is just up to $m$ because 
$\left(  -\frac{s}{2} \right)_k=\left(  -m \right)_k=0$, when $k>m$.

From the asymptotic property of the gamma function 
\[
\lim_{n\to \infty} \frac{\Gamma(n+\alpha)}{\Gamma(n)n^\alpha}=1,\;\;\alpha\in 
\R,
\]
and the observation above, it follows that for even $s$
\begin{equation}  \label{limithypergeometric}
 F_L(s)\to {}_{2}F_{1}\left( \frac{d-s}{2},-\frac{s}{2};\frac{d}{2}+1;1  
\right)
\end{equation}
when $L$ goes to $+\infty$. We will see from our next results that this 
limit holds for all $0<s<d$. 


Now we prove the asymptotic expansion of Riesz $s$-energy. The main ingredient 
in the proof of Theorem~\ref{th:asymptoticrieszsharmonic} is the following 
estimate in terms of Bessel functions $J_\nu$ of the first kind. Recall that 
for $\nu\in\C$, Bessel functions of the first kind of order $\nu$ are the canonical solutions of the 
second order differential equation
\[
 \frac{d^2 y}{dz^2}+\frac{1}{z}\frac{d y}{dz}+ 
\left(1-\frac{\nu^2}{z^2}\right)y=0.
\]
\begin{proposition}\label{pr:integralestimate}
\begin{align*}
&\lim_{L\to \infty} \frac{1}{L^s}\int_{-1}^1 P_L^{(1+\lambda,\lambda)}(t)^2 
(1-t)^{\lambda-\frac{s}{2}}(1+t)^{\lambda}\,dt=
2^{\frac{s}{2}+d}\int_0^{\infty} \frac{J_{1+\lambda}(x)^2}{x^{1+s}} dx
\\
&=2^{\frac{s}{2}-1}\frac{\Gamma\left( \frac{d-s}{2} \right) }
{ \Gamma\left( 1+\frac{d}{2} \right)\Gamma\left( 1+\frac{s}{2} \right)}
{}_{2}F_{1}\left( \frac{d-s}{2},\frac{d+1}{2};d+1;1  \right)
\\
&=2^{d+\frac{s}{2}-1} \frac{\Gamma\left( \frac{d-s}{2} \right)  \Gamma\left( 
\frac{1+s}{2} \right) }
{ \sqrt{\pi} \Gamma\left( 1+\frac{s}{2} \right) \Gamma\left( 1+\frac{s+d}{2} 
\right)}.
\end{align*}
\end{proposition}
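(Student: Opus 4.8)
The proposition is a chain of three equalities, of which the first is the analytic heart and the other two are closed-form evaluations. The plan is to treat them in order: the first equality rescales the weighted $L^2$-norm of a Jacobi polynomial to a Bessel integral; the second evaluates that Bessel integral as a Gauss hypergeometric series at the point $1$; and the third applies Gauss's summation theorem together with the Legendre duplication formula to reach the final quotient of Gamma functions.

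For the first equality I would pass to the angular variable $t=\cos\theta$ and rescale by setting $\theta=z/L$. With $1-t=2\sin^2(\theta/2)$, $1+t=2\cos^2(\theta/2)$ and $dt=\sin\theta\,d\theta$, the integral becomes $\int_0^{\pi L}f_L(z)\,dz$, where $f_L(z)$ carries the square of $P_L^{(1+\lambda,\lambda)}(\cos(z/L))$ against the rescaled weight. For each fixed $z>0$ the Mehler--Heine limit \eqref{mehlerheine} gives $L^{-1-\lambda}P_L^{(1+\lambda,\lambda)}(\cos(z/L))\to(z/2)^{-1-\lambda}J_{1+\lambda}(z)$; combined with $1-t\approx z^2/(2L^2)$, $1+t\to2$ and $dt\approx z\,dz/L^2$, a count of the powers of $L$ (a factor $L^{2+2\lambda}$ from $P_L^2$ and a factor $L^{-2\lambda+s-2}$ from the rescaled weight and measure, with $2+2\lambda=d$) shows that $L^{-s}f_L(z)\to 2^{d+s/2}J_{1+\lambda}(z)^2z^{-1-s}$ pointwise, the accumulated constant being $2^{2+2\lambda+s/2}=2^{d+s/2}$.

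The main obstacle is to interchange $\lim_{L\to\infty}$ with the integral, and this is precisely where the classical Jacobi estimates \eqref{Jacobiestimate} and \eqref{mehlerheine} enter. I would exhibit a dominating $g\in L^1(0,\infty)$, independent of $L$, with $L^{-s}|f_L(z)|\mathbf{1}_{[0,\pi L]}(z)\le g(z)$. On compact $z$-sets the convergence in \eqref{mehlerheine} is uniform, yielding $g(z)\lesssim z^{d-1-s}$ near $z=0$, which is integrable there precisely because $s<d$. For $\theta=z/L$ in the range $c/L\le\theta\le\pi-c/L$ the interior estimate \eqref{Jacobiestimate} gives $|P_L^{(1+\lambda,\lambda)}(\cos\theta)|\lesssim k(\theta)/\sqrt{L}$ (its error $O(1)/(L\sin\theta)$ being bounded there); inserting $k(\theta)^2\sim(\sin(\theta/2))^{-2\lambda-3}$ and collecting the weight factors produces $g(z)\lesssim z^{-2-s}$, integrable at infinity for every $s>-1$. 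The sliver $\theta$ near $\pi$ (that is, $z\in[\pi L-c,\pi L]$) must be handled separately via the second Mehler--Heine limit in \eqref{mehlerheine}: there the vanishing factor $(1+t)^\lambda$ cancels the endpoint growth of $P_L$ and the contribution is only $o(L^s)$. Dominated convergence then delivers the first equality with the constant $2^{d+s/2}$.

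For the second equality I would evaluate $\int_0^\infty J_{1+\lambda}(x)^2x^{-1-s}\,dx$ by the Weber--Schafheitlin formula (equivalently a standard Bessel-integral table) with $\mu=\nu=1+\lambda=d/2$ and exponent $1+s$; the convergence requirement $0<1+s<d+1$ holds for $0<s<d$, and the formula returns exactly the displayed multiple of ${}_2F_1\!\left(\frac{d-s}{2},\frac{d+1}{2};d+1;1\right)$. Finally, since $(d+1)-\frac{d-s}{2}-\frac{d+1}{2}=\frac{1+s}{2}>0$, Gauss's theorem evaluates this ${}_2F_1$ at $1$ as a quotient of Gamma functions, and a single use of the Legendre duplication formula (turning $\Gamma(d+1)/[\Gamma(1+\frac d2)\Gamma(\frac{d+1}{2})]$ into $2^d/\sqrt{\pi}$) collapses the expression to the closed form on the last line. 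This concluding step is routine Gamma-function algebra; the substance of the proposition lies in the dominated-convergence argument behind the first equality.
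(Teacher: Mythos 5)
Your proposal is correct and takes essentially the same route as the paper's proof: the same rescaling $t=\cos(z/L)$ with the Mehler--Heine limits \eqref{mehlerheine} at both endpoints, the classical estimate \eqref{Jacobiestimate} in the bulk (yielding the same $z^{-s-2}$ control of the middle range), the Weber--Schafheitlin/table evaluation of the Bessel integral, and Gauss's theorem plus Legendre duplication for the last line. The only cosmetic difference is how the limit interchange is packaged: you assemble these bounds into an explicit dominating function and invoke dominated convergence, whereas the paper splits the integral at $\pm\cos(c/L)$, takes $L\to\infty$ with $c$ fixed (the middle piece being uniformly $O(c^{-s-1})$ and the piece near $t=-1$ vanishing), and then lets $c\to\infty$.
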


Observe that from this Proposition we get the following

\begin{corollary}
  Given $F_L(s)$ as in \eqref{hypergeometricfunction} then for $0<s<d$
\[
\lim_{L\to +\infty} F_L(s)={}_{2}F_{1}\left( 
\frac{d-s}{2},-\frac{s}{2};\frac{d}{2}+1;1 \right).
\]
\end{corollary}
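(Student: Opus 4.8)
The plan is to read off the limit of $F_L(s)$ by confronting the two descriptions of the weighted integral
\[
I_L:=\int_{-1}^1 P_L^{(1+\lambda,\lambda)}(t)^2 (1-t)^{\lambda-\frac{s}{2}}(1+t)^{\lambda}\,dt
\]
that are now available: the \emph{exact} evaluation of $I_L$ from Lemma~\ref{lem:integral2}, which is proportional to $C_{s,d}(L)\,F_L(s)$, and the \emph{asymptotic} value of $I_L/L^s$ supplied by Proposition~\ref{pr:integralestimate}. Dividing the exact expression by $L^s$ and letting $L\to\infty$ will isolate $\lim_L F_L(s)$ as an explicit multiple of a ${}_2F_1$, and a short Gamma-function manipulation will then show that this ${}_2F_1$ coincides with the one in the statement.

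Concretely, I would first rearrange the second displayed equality of Lemma~\ref{lem:integral2}, absorbing the factor $\omega_{d-1}/(2^{s/2}\omega_d)$ from its left-hand side, to obtain
\[
I_L=\frac{2^{d-1-\frac{s}{2}}\,\Gamma\!\left(\frac{d-s}{2}\right)}{\Gamma\!\left(1+\frac{d}{2}\right)\Gamma\!\left(1+\frac{s}{2}\right)}\,C_{s,d}(L)\,F_L(s).
\]
Dividing by $L^s$ and using $C_{s,d}(L)\sim L^s$ from Theorem~\ref{th:rieszsharmonic} gives
\[
\lim_{L\to\infty}\frac{I_L}{L^s}=\frac{2^{d-1-\frac{s}{2}}\,\Gamma\!\left(\frac{d-s}{2}\right)}{\Gamma\!\left(1+\frac{d}{2}\right)\Gamma\!\left(1+\frac{s}{2}\right)}\,\lim_{L\to\infty}F_L(s),
\]
provided the latter limit exists. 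On the other hand, Proposition~\ref{pr:integralestimate} identifies the left-hand side as $2^{\frac{s}{2}-1}\,\Gamma\!\left(\frac{d-s}{2}\right)\big/\big(\Gamma\!\left(1+\frac{d}{2}\right)\Gamma\!\left(1+\frac{s}{2}\right)\big)$ times ${}_2F_1\!\left(\frac{d-s}{2},\frac{d+1}{2};d+1;1\right)$; in particular the limit does exist. Cancelling the common factor $\Gamma\!\left(\frac{d-s}{2}\right)\big/\big(\Gamma\!\left(1+\frac{d}{2}\right)\Gamma\!\left(1+\frac{s}{2}\right)\big)$ and solving leaves
\[
\lim_{L\to\infty}F_L(s)=2^{\,s-d}\,{}_2F_1\!\left(\frac{d-s}{2},\frac{d+1}{2};d+1;1\right).
\]

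It then remains to check the closed-form identity
\[
2^{\,s-d}\,{}_2F_1\!\left(\frac{d-s}{2},\frac{d+1}{2};d+1;1\right)={}_2F_1\!\left(\frac{d-s}{2},-\frac{s}{2};\frac{d}{2}+1;1\right),
\]
which is where the only real work lies. Since $0<s<d$, both series satisfy the convergence hypothesis $c-a-b>0$ of Gauss's theorem (the respective values being $\frac{1+s}{2}$ and $1+s$), so each side evaluates to an explicit quotient of Gamma functions. Matching the two quotients reduces to the Legendre duplication formula applied to $\Gamma(d+1)$ and to $\Gamma(1+s)$, after which the powers of $2$ and the factors $\sqrt{\pi}$ cancel exactly and the identity follows. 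This establishes the asserted limit for all $0<s<d$, extending the evaluation \eqref{limithypergeometric} previously obtained only for even $s$.
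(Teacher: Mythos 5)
Your proof is correct and follows essentially the same route as the paper's own: both solve the identity of Lemma~\ref{lem:integral2} for $F_L(s)$ in terms of the integral $I_L$ and $C_{s,d}(L)$, invoke Proposition~\ref{pr:integralestimate} (together with $C_{s,d}(L)/L^s\to1$) to identify the limit, and then match the resulting expression with ${}_2F_1\left(\frac{d-s}{2},-\frac{s}{2};\frac{d}{2}+1;1\right)$ via Gauss's theorem and the Legendre duplication formula. The only difference is that you write out explicitly the Gamma-function algebra that the paper compresses into one line.
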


\begin{proof}
 From Lemma \ref{lem:integral2} we get that
\[
F_L(s)=\frac{\Gamma\left(1+\frac{d}{2}\right)\Gamma\left(1+\frac{s}{2}\right)}
{ 2^{d-s/2-1} C_{s,d}(L)\Gamma\left(\frac{d-s}{2}  \right)}
\int_{-1}^1 P_L^{(1+\lambda,\lambda)}(t)^2 
(1-t)^{\lambda-\frac{s}{2}}(1+t)^{\lambda}\,dt. 
\]
Now from the Proposition above and Legendre's duplication formula
\begin{equation} \label{eq:duplication}
 \sqrt{\pi} \Gamma(x)=2^{x-1}\Gammaf{x+1}{2}\Gammaf{x}{2}
\end{equation}
we get the result.
\end{proof}

\begin{proof}[Proof of Proposition \ref{pr:integralestimate}]
  The second equality is from \cite[p.47, (4)]{EOT54}. For the last equality 
use Gauss theorem about the hypergeometric function
and the duplication formula \eqref{eq:duplication}.

For the first equality, we split the integral
\begin{align*}
 \int_{-1}^1 & L^{-s}  P_L^{(1+\lambda,\lambda)}(t)^2 
(1-t)^{\lambda-\frac{s}{2}}(1+t)^{\lambda}\,dt
\\
=&\left[\int_{-1}^{-\cos \frac{c}{L}}+\int_{-\cos \frac{c}{L}}^{\cos 
\frac{c}{L}}+
\int_{\cos \frac{c}{L}}^1 \right] L^{-s} P_L^{(1+\lambda,\lambda)}(t)^2 
(1-t)^{\lambda-\frac{s}{2}}(1+t)^{\lambda}\,dt
\\
=& A(c,L)+B(c,L)+C(c,L),
\end{align*}
where $c>0$ is fixed and $c<\pi L$. For the boundary parts we do a change of 
variables $t=\cos(x/L)$ to get
\[
\begin{split}
C(c,L)&=\\
&2^{s/2}\int_0^c L^{-2-2\lambda}  P_L^{(1+\lambda,\lambda)}\left(\cos 
\frac{x}{L}\right)^2 \left( \frac{\sin \frac{x}{L}}{\frac{x}{L}} 
\right)^{2\lambda+1}
\left( \frac{1-\cos \frac{x}{L}}{\frac{1}{2}\left(\frac{x}{L}\right)^2} 
\right)^{-s/2} x^{2\lambda+1-s}\,dx.
\end{split}
\]
Using the Mehler-Heine estimates \eqref{mehlerheine} and the elementary limits
\[
 \lim_{L\rightarrow\infty}\frac{\sin \frac{x}{L}}{\frac{x}{L}}=1,\quad 
\lim_{L\rightarrow\infty}\frac{1-\cos 
\frac{x}{L}}{\frac{1}{2}\left(\frac{x}{L}\right)^2}=1,
\]
we conclude:
\[
 \lim_{L\rightarrow\infty}C(c,L)=2^{\frac{s}{2}+d}\int_0^c 
\frac{J_{1+\lambda}(x)^2}{x^{1+s}} dx.
\]

For the other end of the interval, using the change of variables $t=-\cos(x/L)$ 
we get
\[
A(c,L)=
\int_0^c L^{-2-2\lambda}  P_L^{(1+\lambda,\lambda)}\left(-\cos 
\frac{x}{L}\right)^2 
\left( \frac{\sin \frac{x}{L}}{\frac{x}{L}} \right)^{2\lambda+1}
\left(\frac{ 1+\cos \frac{x}{L} }{\left(\frac{x}{L}\right)^2}\right)^{-s/2} 
x^{2\lambda+1} dx,
\]
and, again using \eqref{mehlerheine}, this expression converges to zero when $L\to \infty$.
For the middle integral we get, after the change of variables $t=-\cos \theta$ 
and the use of the asymptotic estimate for Jacobi polynomials 
\eqref{Jacobiestimate}
\begin{align*}
0\leq B(c,L) & \lesssim \frac{1}{L^{s+1}}\int_{\frac{c}{L}}^{\pi-\frac{c}{L}} 
\frac{1}{(\sin \frac{\theta}{2})^{s+2}}d\theta\lesssim
\frac{1}{L^{s+1}}\int_{\frac{c}{L}}^{\pi-\frac{c}{L}} 
\frac{1}{\theta^{s+2}}d\theta
\\
&
\le 
\frac{1}{L^{s+1}}\int_{\frac{c}{L}}^{+\infty} 
\frac{1}{\theta^{s+2}}d\theta=\frac{1}{(s+1)c^{s+1}}.
\end{align*}
We have then proved that for all $c>0$,
\begin{align*}
 2^{\frac{s}{2}+d}\int_0^c \frac{J_{1+\lambda}(x)^2}{x^{1+s}} 
dx\leq&\lim_{L\rightarrow\infty} \left( \int_{-1}^1  L^{-s}  
P_L^{(1+\lambda,\lambda)}(t)^2 
(1-t)^{\lambda-\frac{s}{2}}(1+t)^{\lambda}\,dt\right)\\ \leq & 
\frac{R(s,d)}{(s+1)c^{s+1}}+ 2^{\frac{s}{2}+d}\int_0^\infty 
\frac{J_{1+\lambda}(x)^2}{x^{1+s}}\, dx,
\end{align*}
with $R(s,d)$ a constant independent of $c$. Taking the limit as 
$c\rightarrow\infty$, the result follows.
\end{proof}


\begin{proof}[Proof of Theorem~\ref{th:asymptoticrieszsharmonic}]

  From Theorem \ref{th:rieszsharmonic} we get that
\[
\mathbb{E}_{x\in(\S^d)^n}(E_s(x))=\pi_L^2 V_s(\S^d)-
\frac{\pi_L^2 \omega_{d-1}}{\binom{L+\frac{d}{2}}{L}^2 
2^{s/2}\omega_d}\int_{-1}^1 P_L^{(1+\lambda,\lambda)}(t)^2 
(1-t)^{\lambda-\frac{s}{2}}(1+t)^{\lambda}\,dt.
\]
  When $L\to \infty$
\[
\frac{\pi_L}{\binom{L+\frac{d}{2}}{L}^2}= \frac{2 \Gamma\left( 1+\frac{d}{2} 
\right)^2 }{d!}+o(1)\;\;\mbox{and}\;\;\pi_L^{s/d}=\left( 
\frac{2}{d!}\right)^{s/d} L^s (1+o(1)).
\] 
From 
\begin{align*}
\mathbb{E}_{x\in(\S^d)^n}(E_s(x)) & -\pi_L^2 V_s(\S^d)=
\pi_L^{1+\frac{s}{d}}\left(\frac{d!}{2}\right)^{-1+\frac{s}{d}}
\Gamma\left( 1+\frac{d}{2} \right)^2  \frac{\omega_{d-1}}{2^{s/2}\omega_d}
\\
&
\times (1+o(1))
\int_{-1}^1 L^{-s} P_L^{(1+\lambda,\lambda)}(t)^2 
(1-t)^{\lambda-\frac{s}{2}}(1+t)^{\lambda}\,dt,
\end{align*}
and Proposition \ref{pr:integralestimate} we get the result with
\[ 
C_{s,d}=\left(\frac{d!}{2}\right)^{-1+\frac{s}{d}} 
\frac{\omega_{d-1}}{\omega_d}2^{d-1} \frac{ \Gamma\left( 1+\frac{d}{2} \right)^2 
\Gamma\left( \frac{d-s}{2} \right)  \Gamma\left( \frac{1+s}{2} \right) } { 
\sqrt{\pi} \Gamma\left( 1+\frac{s}{2} \right) \Gamma\left( 1+\frac{s+d}{2} 
\right)}.
\]
%
Finally, recall the value of $V_s(\S^d)$ from \eqref{eq:Vs} which yields
\[
 \frac{\omega_{d-1}}{\omega_d}2^{d-1} \Gamma\left( \frac{d}{2} 
\right)\Gamma\left( \frac{d-s}{2} \right) = 
2^sV_s(\S^d)\Gamma\left(d-\frac{s}{2}\right).
\]
\end{proof}



\begin{proof}[Proof of Theorem~\ref{th:logharmonic}]

We compute the derivative at $s=0$ of the expression given in Theorem 
\ref{th:rieszsharmonic}, which according to Corollary \ref{cor:rieszenergies} 
equals the expected value of the logarithmic energy. We then have (using 
\eqref{eq:VlogVs} for the first term):
\[
\mathbb{E}_{x\in (\S^d)^n} (E_0(x))=
\pi_L^2 
V_{\log}(\S^d)-C_L \frac{d}{ds}\mid_{s=0^+}(D_L(s)E_L(s)F_L(s)),
\]
where 
\begin{align*}
 C_L\;=&\;\frac{2^{d-1}\omega_{d-1}\pi_L^2\Gamma\left(  L+\frac{d}{2} 
\right)\Gamma\left(  L+\frac{d}{2}+1 \right) 
}{\binom{L+\frac{d}{2}}{L}^2\omega_d \Gamma(L+1)^2 \Gamma\left( 1+\frac{d}{2} 
\right)},\\
 D_L(s)\;=&\;\frac{\Gamma\left( L+\frac{s}{2}+1 \right)}{\Gamma\left( 
1+\frac{s}{2} \right)},\\
 E_L(s)\;=&\; \frac{ 2^{-s}\Gamma\left( \frac{d-s}{2} \right) }{\Gamma\left( 
L-\frac{s}{2}+d \right)}, \\
 F_L(s)\;=&\;{}_{4}F_{3}\left( 
-L,d+L,\frac{d-s}{2},-\frac{s}{2};\frac{d}{2}+1,d-\frac{s}{2}+L,-\frac{s}{2}
-L;1  \right)\\
 \;=&\;\sum_{k=0}^L \frac{(-L)_k (d+L)_k (\frac{d-s}{2})_k (-\frac{s}{2})_k  
}{(\frac{d}{2}+1)_k (d-\frac{s}{2}+L)_k (-\frac{s}{2}-L)_k}\frac{1}{k!}.
\end{align*}

We have that
\[
D_L(0)=L!,\;\;E_L(0)=\frac{\Gamma\left(  \frac{d}{2} \right)}{\Gamma\left(  d+L 
\right)},\;\;F_L(0)=1.
\]
The derivatives at $0$ of $D_L$ and $E_L$ are:
\begin{align*}
D_L'(0)&=\frac{1}{2}\left( \Gamma'\left(  L+1 \right)-\Gamma\left(  L+1 
\right)\Gamma'\left(  1 \right)  \right)=\frac{L!}{2}H_L,
\\
E_L'(0)&=\frac{B(0)}{2}\left( H_{L+d-1}-\gamma-\psi_0(d/2)-2\log 2  \right),
\end{align*}
where $\gamma=-\psi_0(1)=-\Gamma'(1)$ is the Euler-Mascheroni constant.

Finally, from the series expression of $F_L(s)$ and using that $(0)_k=0$ when 
$k>0$ we deduce that 
\[F_L'(0)=\sum_{k=1}^L \frac{(-L)_k (d+L)_k (\frac{d}{2})_k 
\frac{d}{ds}[(-\frac{s}{2})_k]_{s=0}  }{(\frac{d}{2}+1)_k (d+L)_k 
(-L)_k}\frac{1}{k!}=
\sum_{k=1}^L 
\frac{d}{ds}\Bigl[\bigl(-\frac{s}{2}\bigr)_k\Bigr]_{s=0}\frac{d}{d+2k}\frac{1}{
k! }. 
\]
For $k\geq$ we have:
\[
\frac{d}{ds}\Bigl[\bigl(-\frac{s}{2}\bigr)_k\Bigr]=-\frac{1}{2}\left(-\frac{s}{2
}
\right)_k\left( 
\psi_0\left(k-\frac{s}{2}\right)-\psi_0\left(-\frac{s}{2}\right)\right)
\to -\frac{1}{2}\Gamma(k),\;s\to 0^+,
\]
and conclude that
\[
F_L'(0)=-\frac{d}{2}\sum_{k=1}^L \frac{1}{k(d+2k)}.
\]

We have then proved that
\begin{align*}
& \frac{d}{ds}\mid_{s=0^+} \left(D_L(s)E_L(s)F_L(s)\right)
\\
 &
 =\frac{\Gamma\left(  \frac{d}{2} \right)L!}{2\Gamma\left(  d+L 
\right)}\left(H_L+H_{L+d-1}+\psi_0(1/2)-\psi_0(d/2)-d\sum_{k=1}^L 
\frac{1}{k(d+2k)}\right).\\
\end{align*}
Note that
\[d\sum_{k=1}^L \frac{1}{k(d+2k)}=\sum_{k=1}^L \left( \frac{1}{k} 
-\frac{1}{\frac{d}{2}+k}\right)=H_L-\sum_{k=1}^L \frac{1}{\frac{d}{2}+k}.\]
We have proved that
\begin{align*}
  & \mathbb{E}_{x\in (\S^d)^n}  (E_0(x))
 \\
 =
  &
  \pi_L^2V_{\log}(\S^d)-\frac{C_L \Gamma\left(  
\frac{d}{2} \right)L!}{2\Gamma\left(  d+L \right)}
\left( \sum_{k=1}^L \frac{1}{\frac{d}{2}+k}+H_{L+d-1}+\psi_0(1/2)-\psi_0(d/2)   
\right).
\end{align*}
Finally, by using the duplication formula (\ref{eq:duplication}) one can easily check that
\[
 \frac{C_L \Gamma\left(  \frac{d}{2} \right)L!}{\Gamma\left(  d+L \right)}=\pi_L.
\]
\end{proof}


\begin{proof}[Proof of Corollary~\ref{cor:logharmonic}]
We compute the asymptotic behavior based on the equality of 
Theorem~\ref{th:logharmonic}. 
  From the recurrence relation
\[
\psi_0(x+1)=\psi_0(x)+\frac{1}{x}
\]
  we get that
\[
\sum_{k=1}^L \frac{1}{\frac{d}{2}+k}=
\psi_0\left(  \frac{d}{2}+1+L \right)-\psi_0\left(  \frac{d}{2} 
\right)-\frac{2}{d}
\]
  Recall the asymptotic expansions (for $x,n\rightarrow\infty$):
\[
\psi_0(x)=\log x-\frac{1}{2x}+o(x^{-1}),\qquad H_n=\psi_0(n+1)+\gamma=
\log n +\gamma+\frac{1}{2n}+o(n^{-1})
\] 
  So when $L\rightarrow\infty$ we have from Theorem~\ref{th:logharmonic} 
($\psi_0(1/2)=-\gamma-2\log 2$)
$$
   \mathbb{E}_{x\in (\S^d)^n} (E_0(x))=\pi_L^2 V_{\log}(\S^d)- \pi_L\left(\log L-\log 2-\psi_0\left( 
\frac{d}{2} \right)-\frac{1}{d}+o(1)\right).
$$
  From the asymptotic expression \eqref{eq:piL} we have as $L\rightarrow\infty$:
  \begin{equation}\label{eq:Lasymp}
   \log L=\frac{\log \pi_L}{d}-\frac{\log(2/d!)}{d}+o(1),
  \end{equation}
  so we have proved:
\begin{align*}
 \mathbb{E}_{x\in (\S^d)^n} (E_0(x))\;=&\; 
\pi_L^2 V_{\log}(\S^d)-\frac{1}{d}\pi_L\log \pi_L\\
&\qquad \;+\;\left( \frac{1}{d}\log \frac{2}{d!}+\log 2+\psi_0\left( 
\frac{d}{2} \right)+\frac{1}{d}\right) \pi_L+o(\pi_L),
\end{align*}
as wanted.
\end{proof}

To prove Theorem~\ref{th:rieszsingular} we use the following result.

\begin{proposition}			\label{limitderivative}
  We have that
\begin{align*}
 \lim_{L\to \infty} & \frac{d}{ds} \left[{}_{4}F_{3}\left( 
-L,d+L,\frac{d-s}{2},-\frac{s}{2};\frac{d}{2}+1,d-\frac{s}{2}+L,-\frac{s}{2}
-L;1  \right)\right]_{s=d}
 \\
 &
 =
 \frac{d}{ds} \left[ {}_{2}F_{1}\left( 
\frac{d-s}{2},-\frac{s}{2};\frac{d}{2}+1;1  \right) \right]_{s=d}=
 -\frac{1}{2}\sum_{k=1}^{\infty}\frac{ (-\frac{d}{2})_k  
}{(\frac{d}{2}+1)_k}\frac{1}{k}
 \\
 &
 =\frac{1}{2} \left(\psi_0(d+1)-\psi_0\left(\frac{d}{2}+1\right)\right).
\end{align*}
\end{proposition}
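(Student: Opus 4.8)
The plan is to prove the three equalities in turn, computing the two derivatives in the first line of the statement \emph{explicitly} as the same series, and then summing that series in closed form; the commutation of the limit with the derivative will thus be a consequence of the computation rather than an extra hypothesis.

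First I would differentiate $F_L(s)$, in the series form \eqref{hypergeometricfunction}, term by term at $s=d$. The crucial observation is that the factor $\bigl(\frac{d-s}{2}\bigr)_k$ equals $(0)_k=0$ at $s=d$ for every $k\ge1$, while the remaining factors of the $k$-th term are smooth and nonvanishing near $s=d$ for $0\le k\le L$ (the offending denominator factors $(\frac{d}{2}+L)_k$ and $(-\frac{d}{2}-L)_k$ never vanish in this range, as all factors of the latter are negative). Hence, by the product rule, only the derivative of $\bigl(\frac{d-s}{2}\bigr)_k$ survives, and a short calculation gives $\frac{d}{ds}\bigl(\frac{d-s}{2}\bigr)_k\big|_{s=d}=-\frac{(k-1)!}{2}$. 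The $k=0$ term is constant, so this yields
\[
\frac{d}{ds}\big[F_L(s)\big]_{s=d}=-\frac12\sum_{k=1}^L\frac1k\,\frac{(-\frac{d}{2})_k}{(\frac{d}{2}+1)_k}\,\frac{(-L)_k(d+L)_k}{(\frac{d}{2}+L)_k(-\frac{d}{2}-L)_k}.
\]

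Next I would pass to the limit $L\to\infty$. By the inequality \eqref{ineq_hypergeometric} evaluated at $s=d$, the last factor lies in $(0,1]$ and tends to $1$ for each fixed $k$ as $L\to\infty$; meanwhile the standard ratio asymptotics of the Gamma function give $\frac{(-d/2)_k}{(d/2+1)_k}\sim C\,k^{-d-1}$, so the series $\sum_{k\ge1}\frac1k\bigl|\frac{(-d/2)_k}{(d/2+1)_k}\bigr|$ converges absolutely (and in fact terminates when $d$ is even, where $(-d/2)_k=0$ for $k>d/2$). Dominated convergence over the counting measure then identifies the first displayed limit with $-\frac12\sum_{k\ge1}\frac{(-d/2)_k}{(d/2+1)_k}\frac1k$. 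The identical termwise differentiation applied to the convergent series for ${}_2F_1\bigl(\frac{d-s}{2},-\frac{s}{2};\frac d2+1;1\bigr)$, legitimate since it and its $s$-derivatives converge locally uniformly near $s=d$, produces the same series, which establishes the first two equalities at once.

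Finally, for the closed form of $S:=\sum_{k\ge1}\frac{(-d/2)_k}{(d/2+1)_k}\frac1k$ I would use the parameter-derivative of Gauss's theorem. Introducing an auxiliary parameter $\epsilon$, Gauss's theorem applied to
\[
{}_2F_1\Bigl(-\tfrac d2,\epsilon;\tfrac d2+1;1\Bigr)=\frac{\Gamma(\frac d2+1)\,\Gamma(d+1-\epsilon)}{\Gamma(d+1)\,\Gamma(\frac d2+1-\epsilon)}
\]
is valid for small $\epsilon$ since $\frac d2+1-(-\frac d2)-\epsilon=d+1-\epsilon>0$. Expanding the left-hand side with $(\epsilon)_k/k!=\epsilon/k+O(\epsilon^2)$ gives $1+\epsilon S+O(\epsilon^2)$, while the logarithmic derivative of the right-hand side at $\epsilon=0$ gives $1+\epsilon\,[\psi_0(\frac d2+1)-\psi_0(d+1)]+O(\epsilon^2)$. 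Matching the coefficients of $\epsilon$ yields $S=\psi_0(\frac d2+1)-\psi_0(d+1)$, hence $-\frac12 S=\frac12\bigl(\psi_0(d+1)-\psi_0(\frac d2+1)\bigr)$, the last equality. I expect the main obstacle to be the rigorous justification of the $L\to\infty$ passage, namely exhibiting a summable dominating sequence uniform in $L$; this is supplied precisely by the monotone bound \eqref{ineq_hypergeometric} together with the $k^{-d-1}$ decay of $(-d/2)_k/(d/2+1)_k$, after which the remaining manipulations are routine.
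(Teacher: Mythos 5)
Your proof is correct and follows essentially the same route as the paper: term-by-term differentiation of the terminating series at $s=d$ (where only the derivative of $\left(\frac{d-s}{2}\right)_k$ survives, giving $-\frac{(k-1)!}{2}$), passage to the limit $L\to\infty$ via the uniform bound \eqref{ineq_hypergeometric} combined with the $O(k^{-d-1})$ decay of $(-\frac{d}{2})_k/(\frac{d}{2}+1)_k$ (the paper writes your dominated-convergence step as an explicit $\epsilon$-truncation, split into even and odd $d$), and Gauss's theorem for the closed form. The only cosmetic difference is the final evaluation: the paper differentiates Gauss's formula for ${}_2F_1\left(\frac{d-s}{2},-\frac{s}{2};\frac{d}{2}+1;1\right)$ directly in $s$ at $s=d$, whereas you freeze the first parameter at $-\frac{d}{2}$, perturb an auxiliary parameter $\epsilon$ in the second slot, and match coefficients of $\epsilon$ — both are the same use of Gauss's theorem plus a first-order expansion.
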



\begin{proof}
Let as before
\[
F_L(s)={}_{4}F_{3}\left( 
-L,d+L,\frac{d-s}{2},-\frac{s}{2};\frac{d}{2}+1,d-\frac{s}{2}+L,-\frac{s}{2}
-L;1  \right).
\]
Then from the fact that
\[
\lim_{s\to d^-}\frac{d}{ds}\left[\left(\frac{d-s}{2}\right)_k\right]= 
-\frac{\Gamma(k)}{2},
\]
we get that
\[F_L'(d)=-\frac{1}{2}\sum_{k=1}^L 
\frac{(-L)_k (d+L)_k (-\frac{d}{2})_k  }{(\frac{d}{2}+1)_k (\frac{d}{2}+L)_k 
(-\frac{d}{2}-L)_k}\frac{1}{k}.
\]

Observe that, as in the discussion before Proposition \ref{pr:integralestimate},
when $d$ is even, the sum in the generalized hypergeometric function is up to $d/2$ and 
then as for all $k=1,\dots , d/2$,
\[
\lim_{L\to +\infty} \frac{(-L)_k (d+L)_k }{(\frac{d}{2}+L)_k 
(-\frac{d}{2}-L)_k}=1,
\]
we get that
\[
\lim_{L\to +\infty} F_L'(d)=-\frac{1}{2}\sum_{k=1}^{d/2}\frac{ 
(-\frac{d}{2})_k  }{(\frac{d}{2}+1)_k}\frac{1}{k}=
\frac{d}{ds} \left[{}_{2}F_{1}\left( \frac{d-s}{2},-\frac{s}{2};\frac{d}{2}+1;1 
\right)\right]_{s=d}
\]

For odd $d$ also $F_L'(d)$ converges when $L\to +\infty$ to 
\[
\frac{d}{ds} \left[ {}_{2}F_{1}\left( 
\frac{d-s}{2},-\frac{s}{2};\frac{d}{2}+1;1  \right) 
\right]_{s=d}=-\frac{1}{2}\sum_{k=1}^{\infty}\frac{ (-\frac{d}{2})_k  
}{(\frac{d}{2}+1)_k}\frac{1}{k}.
\]
Indeed, we have that for $1\le k\le L$,
\[
\frac{ (-\frac{d}{2})_k  }{(\frac{d}{2}+1)_k}=(-1)^k\frac{\Gamma\left( 
\frac{d}{2}+1 \right)^2}{\Gamma\left( \frac{d}{2}+k+1 \right)\Gamma\left( 
\frac{d}{2}-k+1 \right)}.
\]
By Euler's reflection formula for the Gamma function and Stirling approximation 
we get that
\[
\left|\Gamma\left( \frac{d}{2}+k+1 \right)\Gamma\left( \frac{d}{2}-k+1 
\right)\right|\sim \pi k^d,
\]
and therefore
\[
\sum_{k=1}^{\infty}\left| \frac{ (-\frac{d}{2})_k  
}{(\frac{d}{2}+1)_k}\frac{1}{k}\right|\lesssim \sum_{k=1}^{\infty} 
\frac{1}{k^{d+1}}.
\]
Given $\epsilon>0,$ we choose $n_0$ such that
\[
\sum_{k=n_0+1}^{\infty}\left| \frac{ (-\frac{d}{2})_k  
}{(\frac{d}{2}+1)_k}\frac{1}{k}\right|<\epsilon,
\]
and we get from inequality \eqref{ineq_hypergeometric} that for $L>>n_0$
\[
\left| \sum_{k=n_0+1}^L 
\frac{(-L)_k (d+L)_k (-\frac{d}{2})_k  }{(\frac{d}{2}+1)_k (\frac{d}{2}+L)_k 
(-\frac{d}{2}-L)_k}\frac{1}{k}\right|<\epsilon.
\]

Now the result follows as before from the fact that for all $k=1,\dots , n_0$,
\[
\lim_{L\to +\infty} \frac{(-L)_k (d+L)_k }{(\frac{d}{2}+L)_k 
(-\frac{d}{2}-L)_k}=1,
\]
and therefore 
\[-\frac{1}{2}\sum_{k=1}^{n_0} 
\frac{(-L)_k (d+L)_k (-\frac{d}{2})_k  }{(\frac{d}{2}+1)_k (\frac{d}{2}+L)_k 
(-\frac{d}{2}-L)_k}\frac{1}{k}\to
\sum_{k=1}^{n_0} \frac{ (-\frac{d}{2})_k  }{(\frac{d}{2}+1)_k}\frac{1}{k},\]
when $L\to +\infty$.

The last equality follows from Gauss theorem

\begin{align*}
\frac{d}{ds} & \left[{}_{2}F_{1}\left( 
\frac{d-s}{2},-\frac{s}{2};\frac{d}{2}+1;1  \right)\right]_{s=d}
=\frac{1}{2}\frac{\Gamma'\left( d+1 \right)\Gamma\left( \frac{d}{2}+1 
\right)-\Gamma\left( d+1 \right)\Gamma'\left( \frac{d}{2}+1 
\right)}{\Gamma\left( d+1 \right)\Gamma\left( \frac{d}{2}+1 \right)}
\\
&
=\frac{1}{2}(\psi_0(d+1)-\psi_0(\frac{d}{2}+1)). 
\end{align*}
\end{proof}


\begin{proof}[Proof of Theorem~\ref{th:rieszsingular}]

The function $V_s(\S^d)$ is meromorphic with a simple pole in $s=d$
(because of the term $\Gamma(\frac{d-s}{2})$) and the residue in $d$ equals 
\[
\lim_{s\to d} V_s(\S^d)(s-d)=-\frac{\omega_{d-1}}{\omega_{d}}.
\]

  We can write from Theorem~\ref{th:rieszsharmonic} 
 \[
  \mathbb{E}_{x\in(\S^d)^n}(E_s(x))=n^2 V_s(\S^d)\left( 1-U(s)    \right),
 \]
  where
  \[U(s)=\frac{d F_L(s) 
  \Gamma\left( d-\frac{s}{2} \right)\Gamma\left( L+\frac{s}{2}+1 
\right)}{(2L+d)\Gamma\left( 1+\frac{s}{2} \right)\Gamma\left( L-\frac{s}{2}+d 
\right)},\]
  and
 $F_L(s)={}_{4}F_{3}\left( 
-L,d+L,\frac{d-s}{2},-\frac{s}{2};\frac{d}{2}+1,d-\frac{s}{2}+L,-\frac{s}{2}
-L;1  \right)$.
   As $U(d)=1$ we have that 
\[
\lim_{s\to d} \mathbb{E}_{x\in(\S^d)^n}(E_s(x))= 
\frac{\omega_{d-1}}{\omega_d}\pi_L^2 U'(d).
\]
   We compute this derivative by writing $U(s)=\frac{d}{2L+d}F_L(s)G_L(s)$
  with
\[G_L(s)=\frac{(\frac{s}{2}+1)_L}{(d-\frac{s}{2})_L},\]
  and using  
\[G_L'(d)=\frac{2L+d}{d}\left( 
\psi_0\left(\frac{d}{2}+L\right)-\psi_0\left(\frac{d}{2}\right)+\frac{1}{d+2L}
-\frac{1}{d} \right),\]
  and the asymptotic formula for the digamma function
\[
\psi_0\left(\frac{d}{2}+L\right)+\frac{1}{d+2L}
=\frac{1}{d}\log \pi_L-\frac{1}{d}\log\frac{2}{d!}+o(1).
\]
 Finally, we get
\[
\lim_{s\to d} \mathbb{E}_{x\in(\S^d)^n}(E_s(x))=\frac{\omega_{d-1}}{d 
\omega_d}\pi_L^2 \log \pi_L
\]
\[
+\pi_L^2 \left( 
\frac{\omega_{d-1}}{\omega_d}F'(d)-\psi_0\left(\frac{d}{2}\right)-\frac{1}{d}
-\frac{1}{d}\log\frac{2}{d!}+o(1)\right),
\]
where
\[F_L'(d)=\frac{d}{ds} \left[{}_{4}F_{3}\left( 
-L,d+L,\frac{d-s}{2},-\frac{s}{2};\frac{d}{2}+1,d-\frac{s}{2}+L,-\frac{s}{2}
-L;1  \right)\right]_{s=d}.\]
The result then follows from Proposition \ref{limitderivative}.
\end{proof}



\subsection{Optimality among isotropic kernels. Theorems~\ref{th:rieszsgeneral1}, \ref{th:case2} and \ref{th:optimal}}

\begin{proof}[Proof of Theorem~\ref{th:rieszsgeneral1}]
 
From Corollary~\ref{cor:rieszenergies} we have
\begin{align*}
 \mathbb{E}_{x\in(\Sd)^n}(E_s(x))=
 \int_{u,v\in\Sd}\frac{n^2-|K(\langle u,v\rangle)|^2}{(2-2\langle 
u,v\rangle)^{s/2}}\,d\mu(u)\,d\mu(v).
\end{align*}
This last integral is invariant if we rotate simultaneously $u$ and $v$, thus 
we can assume that $v=e_{1}=(1,0,\ldots,0)$ and we get
\begin{align*}
 \mathbb{E}_{x\in(\Sd)^n}(E_s(x))=&
\int_{u\in\Sd}\frac{n^2-|K(u)|^2}{(2-2u)^{s/2}}\,d\mu(u)\\= 
&\frac{\omega_{d-1}}{ \omega_d}\int_{-1}^1 
\frac{n^2-|K(t)|^2}{(2-2t)^{s/2}}(1-t^2)^{d/2-1}\,dt,
\end{align*}
the last equality from Lemma~\ref{lem:computeintegralsphere}. The theorem 
follows noting that
 \[
  \int_{-1}^1\frac{(1-t^2)^{d/2-1}}{(1-t)^{s/2}}\,dt=2^{d-1-\frac{s}{2}} 
{\mathrm B}\left(\frac{d}{2},\frac{d}{2}-\frac{s}{2}\right).
 \]
\end{proof}


\begin{remark}
We readily have:
$$
 \mathbb{E}_{x\in(\Sd)^n}(E_s(x))\leq \frac{\omega_{d-1}}{\omega_d}n^2 2^{d-s-1} {\mathrm B}\left(\frac{d}{2},\frac{d}{2}-\frac{s}{2}\right)=n^2V_s(\S^d).
$$
\end{remark}
\begin{remark}
We can substitute $K(t)$ by its expansion \eqref{eq:expansionesferas} to get a 
formula for the second integral in Theorem~\ref{th:rieszsgeneral1}:
\[
 \int_{-1}^1\frac{|K(t)|^2(1-t^2)^{d/2-1}}{(1-t)^{s/2}}\,dt=
 \]
 \begin{equation}\label{eq:here}
 \sum_{j,k=0}^\infty a_ka_j\int_{-1}^1(1-t)^{d/2-1-s/2}(1+t)^{d/2-1} 
C_k^{\frac{d-1}{2}}(t)C_j^{\frac{d-1}{2}}(t)\,dt
\end{equation}
\[
 =\sum_{j,k=0}^\infty 
a_ka_j\frac{2^{d-1-s/2}\Gammaf{d-s}{2}\Gammaf{d}{2}\Gammaf{2j+s}{2}
\Gamma(d-1+j)\Gamma(d-1+k)}{\Gamma(j+1) 
\Gamma(k+1)\Gammaf{s}{2}\Gammaf{2d+2j-s}{ 2}\Gamma(d-1)^2}\times
\]
\[
\qquad_{4}F_{3}\left(-k,k+d-1,\frac{d-s}{2}, 
1-\frac{s}{2};\frac{d}{2},d-\frac{s} {2}+j,1-\frac{s}{2}-j;1\right).
\]
\end{remark}


\subsubsection{The special case $s=2$ and $d\ge 3.$ Theorem~\ref{th:case2}}

One can use Theorem~\ref{th:rieszsgeneral1} and 
\eqref{eq:here} to write
\begin{equation}\label{eq:esperanzaQs}
 \mathbb{E}_{x\in(\Sd)^n}(E_2(x))= \frac{\omega_{d-1}}{2\omega_d}\left(n^2 
2^{d-2} {\mathrm B}\left(\frac{d}{2},\frac{d}{2}-1\right)-\sum_{j,k=0}^\infty 
a_ka_j Q_{k,j}^{d}\right),
\end{equation}
where
\begin{align*}
  Q_{k,j}^d 
  =&\int_{-1}^1(1-t)^{d/2-2}(1+t)^{d/2-1}C_k^{\frac{d-1}{2}}(t)C_j^{\frac{d-1}{2}}(t)\,dt.
\end{align*}
The following lemma shows that these integrals can be solved exactly.
\begin{lemma}\label{lem:Q}
 Let $d\geq3$ and $s=2$. Then for all $0\leq k\leq j$ we have
 \[
  Q_{j,k}^d=Q_{k,j}^d=Q_{k,k}^d=2^{d-2}\binom{d+k-2}{k}\,\mathrm 
B\left(\frac{d}{2},\frac{d}{2}-1\right)=\binom{d+k-2}{k}Q_{0,0}^d,
 \]
\end{lemma}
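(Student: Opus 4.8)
The plan is to read off $Q_{k,j}^d$ directly from the closed form \eqref{eq:here}, specialized to $s=2$. Since the integrand defining $Q_{k,j}^d$ is symmetric under interchanging the indices, the equality $Q_{j,k}^d=Q_{k,j}^d$ is immediate, and it suffices to treat $0\le k\le j$ and to show that the value does not depend on the larger index $j$. The point is that $Q_{k,j}^d$ is exactly the integral on the left of \eqref{eq:here} with $d/2-1-s/2=d/2-2$, i.e.\ with $s=2$; the hypothesis $d\ge 3$ gives $d/2-2>-1$, so the integral converges at $t=1$ and \eqref{eq:here} applies (with $k$ as the terminating index, which is legitimate precisely because $k\le j$).

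The first step is to set $s=2$ in \eqref{eq:here}. The key observation is that the upper parameter $1-\tfrac{s}{2}$ of the series becomes $0$; since $(0)_n=0$ for every $n\ge 1$ while $(0)_0=1$, all terms beyond the constant one vanish and the ${}_{4}F_{3}$ collapses to $1$. Here one must be careful that this substitution is legitimate even though the lower parameter $1-\tfrac{s}{2}-j$ becomes the non-positive integer $-j$: because the numerator parameter $-k$ truncates the series at $n=k$ and $k\le j$, the Pochhammer symbols $(-j)_n$ remain nonzero for all surviving indices $0\le n\le k\le j$, so each $n\ge1$ term is genuinely $0/(\text{nonzero})=0$. (Equivalently, one may pass to the limit $s\to 2$, where $(1-\tfrac{s}{2})_n\to 0$ and $(1-\tfrac{s}{2}-j)_n\to(-j)_n\neq0$ for $n\le k$.) This vanishing-numerator/singular-denominator coincidence is the one delicate point, and it is exactly the place where the standing assumption $k\le j$ is used.

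With the hypergeometric factor equal to $1$, what remains is a routine reduction of the Gamma prefactor of \eqref{eq:here} at $s=2$. Using $\Gamma(\tfrac{d-s}{2})=\Gamma(\tfrac{d}{2}-1)$, $\Gamma(\tfrac{2j+s}{2})=\Gamma(j+1)$, $\Gamma(\tfrac{s}{2})=1$ and $\Gamma(\tfrac{2d+2j-s}{2})=\Gamma(d+j-1)$, the factors $\Gamma(j+1)$ and $\Gamma(d-1+j)=\Gamma(d+j-1)$ cancel between numerator and denominator; all dependence on $j$ then disappears and one is left with
\[
Q_{k,j}^d=2^{d-2}\,\frac{\Gamma\!\left(\frac{d}{2}\right)\Gamma\!\left(\frac{d}{2}-1\right)\Gamma(d+k-1)}{\Gamma(k+1)\,\Gamma(d-1)^2}.
\]
This already yields the independence from the larger index, that is $Q_{k,j}^d=Q_{k,k}^d$ for $j\ge k$.

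Finally I would recognize this expression as the claimed one by writing $B(\tfrac{d}{2},\tfrac{d}{2}-1)=\Gamma(\tfrac{d}{2})\Gamma(\tfrac{d}{2}-1)/\Gamma(d-1)$ and $\binom{d+k-2}{k}=\Gamma(d+k-1)/(\Gamma(k+1)\Gamma(d-1))$, which gives $Q_{k,k}^d=2^{d-2}\binom{d+k-2}{k}B(\tfrac{d}{2},\tfrac{d}{2}-1)$; since $\binom{d-2}{0}=1$ this last quantity is $\binom{d+k-2}{k}Q_{0,0}^d$, completing the chain of equalities. The whole argument is therefore essentially the single observation that the numerator parameter $1-\tfrac{s}{2}$ vanishes at $s=2$, followed by bookkeeping of Gamma quotients; the only part requiring genuine care is justifying the collapse of the ${}_{4}F_{3}$ in the presence of the singular lower parameter, which the hypothesis $k\le j$ resolves.
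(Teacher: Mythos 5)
Your proof is correct, but it follows a genuinely different route from the paper's. The paper proves Lemma~\ref{lem:Q} in an elementary, self-contained way: it computes $Q_{0,0}^d$, $Q_{1,0}^d$ and $Q_{1,1}^d$ directly from the beta integral \eqref{eq:beta}, then uses the three-term recurrence for Gegenbauer polynomials to derive the recursion \eqref{eq:recursion} for $k\mapsto Q_{k,j}^d$ (with an orthogonality correction term appearing exactly when $k=j+1$), and finishes by induction through five claims. You instead read the value off the closed hypergeometric form \eqref{eq:here}: at $s=2$ the upper parameter $1-\tfrac{s}{2}$ vanishes, so the ${}_{4}F_{3}$ collapses to its constant term $1$, and in the Gamma prefactor the $j$-dependent factors $\Gamma(j+1)$ and $\Gamma(d-1+j)=\Gamma(d+j-1)$ cancel, leaving exactly $2^{d-2}\binom{d+k-2}{k}\mathrm{B}\bigl(\tfrac{d}{2},\tfrac{d}{2}-1\bigr)$. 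Your handling of the one delicate point is right and essential: at $s=2$ the lower parameter $1-\tfrac{s}{2}-j=-j$ is a non-positive integer, and the expression only makes sense because the terminating parameter $-k$ cuts the series off at $n=k\le j$, where $(-j)_n\neq0$; choosing the smaller index as the terminating one (legitimate by the symmetry $Q_{k,j}^d=Q_{j,k}^d$), or equivalently passing to the limit $s\to2$, resolves this, and indeed with the roles of $j$ and $k$ swapped the formula degenerates into $0/0$ terms. What your route buys is brevity and an explanation of why $s=2$ is special (it is precisely the value at which the hypergeometric factor trivializes); what it costs is that it rests entirely on the table identity \eqref{eq:here}, which the paper states only in a remark, without proof and without specifying a parameter range, whereas the paper's recursion argument needs nothing beyond \eqref{eq:beta}, \eqref{eq:legendre} and the Gegenbauer recurrence. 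For full rigor you should add one sentence: both sides of \eqref{eq:here} (for a fixed pair $k\le j$) are continuous in $s$ near $s=2$ inside $0<s<d$ --- the integral by dominated convergence, since $d\ge3$ gives $\tfrac{d}{2}-2>-1$ --- so the generic-$s$ identity legitimately transfers to $s=2$.
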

\begin{remark}
 The value of the integral in Lemma~\ref{lem:Q} seems to be known just in the 
case $k=j$ (see for example \cite[p. 803]{losrusos} which gives an alternative 
but equivalent expression). Note also that we have
 \begin{equation}\label{eq:Q00omegas}
  \frac{\omega_{d-1}}{2\omega_d}Q_{0,0}^d=V_2(\S^d)
=\frac{d-1}{2d-4}.
 \end{equation}
\end{remark}
The proof of Lemma~\ref{lem:Q} will be a long computation. We will use the 
following basic integral, valid for $a,b>0$:
\begin{equation}\label{eq:beta} 
\int_{-1}^1(1-t)^a(1+t)^b\,dt\stackrel{t=2u-1}{=}2^{a+b+1}\int_0^1(1-u)^au^b\,
du=2^{a+b+1}\mathrm B(a+1,b+1).
\end{equation}
We will also use Legendre's duplication formula in the following form.
\begin{equation}\label{eq:legendre} 
\Gammaf{d-1}{2}=\frac{\sqrt{\pi}\Gamma(d-2)}{2^{d-3}\Gamma\left(\frac{d}{2}
-1\right)}.
\end{equation}

\begin{proof}[Proof of Lemma~\ref{lem:Q}]
We start by computing a few cases for small $k,j$. For $k=j=0$ we have:
\begin{align*}
Q_{0,0}^d=&\int_{-1}^1(1-t)^{d/2-2}(1+t)^{d/2-1}\,dt 
\stackrel{\eqref{eq:beta}}{= }2^{d-2}\mathrm 
B\left(\frac{d}{2},\frac{d}{2}-1\right).
\end{align*}
For $k=1,j=0$ we have:
\begin{align*}
 Q_{1,0}^d=&\int_{-1}^1(1-t)^{d/2-2}(1+t)^{d/2-1}(d-1)t\,dt\\
 =&-(d-1)\left(\int_{-1}^1(1-t)^{d/2-2}(1+t)^{d/2-1}(1-t-1)\,dt\right)\\
 =&-(d-1)\left(\int_{-1}^1(1-t)^{d/2-1}(1+t)^{d/2-1}\,dt-Q_{0,0}^d\right)\\
 \;\stackrel{\eqref{eq:beta}}{=}&-(d-1)\left(2^{d-1}\mathrm 
B\left(\frac{d}{2},\frac{d}{2}\right)-Q_{0,0}^d\right)=2^{d-2}\mathrm 
B\left(\frac{d}{2},\frac{d}{2}-1\right)=Q_{0,0}^d.
\end{align*}
For $k=j=1$ we have:
\begin{align*}
 Q_{1,1}^d=&\int_{-1}^1(1-t)^{d/2-2}(1+t)^{d/2-1}(d-1)^2t^2\,dt
\\
 =
&-(d-1)^2\left(\int_{-1}^1(1-t)^{d/2-1}(1+t)^{d/2}\,dt-Q_{0,0}^d\right)
\\
 \;\stackrel{\eqref{eq:beta}}{=}&-(d-1)^2\left(2^d\mathrm 
B\left(\frac{d}{2},\frac{d+1}{2}\right)-Q_{0,0}^d\right)
=(d-1)Q_{0,0}^d.
 \end{align*}
 We are now ready to prove the general case. Recall the recurrence relation 
satisfied by Gegenbauer's polynomials:
\[
 \ell C_\ell^{d/2-1/2}(t)=\left(2\ell+d-3\right)\,t\, 
C_{\ell-1}^{d/2-1/2}(t)-\left(\ell+d-3\right)C_{\ell-2}^{d/2-1/2}(t)=
\]
\[
 -\left(2\ell+d-3\right)\,(1-t)\, 
C_{\ell-1}^{d/2-1/2}(t)+\left(2\ell+d-3\right) 
C_{\ell-1}^{d/2-1/2}(t)-\left(\ell+d-3\right)C_{\ell-2}^{d/2-1/2}(t).
\]
We thus have
\begin{align*}
 kQ_{k,j}^d =\;&\left(2k+d-3\right) Q_{k-1,j}^d -\left(k+d-3\right)Q_{k-2,j}^d 
\\
 \qquad 
-\;&\left(2k+d-3\right)\int_{-1}^1(1-t^2)^{d/2-1}C_{k-1}^{d/2-1/2}(t)C_j^{
d/2-1/2}(t)\,dt,
\end{align*}
which implies for $k\geq2$:
\begin{equation}\label{eq:recursion}
 k\,Q_{k,j}^d =\begin{cases}\left(2k+d-3\right) Q_{k-1,j}^d 
-\left(k+d-3\right)Q_{k-2,j}^d  & k\neq j+1 \\ \left(2k+d-3\right) Q_{k-1,j}^d 
-\left(k+d-3\right)Q_{k-2,j}^d -\frac{\pi 
2^{3-d}\Gamma(d+k-2)}{\Gamma(k)\Gamma\left(\frac{d}{2}-\frac12\right)^2}& 
k=j+1\end{cases}
\end{equation}
These equalities together with $Q_{k,j}^d=Q_{j,k}^d$ and the values of 
$Q_{0,0}^d,\;Q_{1,0}^d$ and $Q_{1,1}^d$ define the value of $Q_{k,j}^d$ for all 
$k,j,d$. We finish the proof with five claims.

\begin{claim}\label{claim1}
For all $k\geq0$ we have $Q_{k,0}^d=Q_{0,0}^d$. 
\end{claim}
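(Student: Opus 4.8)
The plan is to prove the claim by induction on $k$, using the recurrence \eqref{eq:recursion} specialized to $j=0$. The starting observation is that $C_0^{\frac{d-1}{2}}\equiv 1$, so that
\[
Q_{k,0}^d=\int_{-1}^1(1-t)^{d/2-2}(1+t)^{d/2-1}C_k^{\frac{d-1}{2}}(t)\,dt,
\]
and since the exceptional case $k=j+1$ in \eqref{eq:recursion} occurs only at $k=1$, the first branch of the recurrence applies for every $k\geq 2$:
\[
k\,Q_{k,0}^d=(2k+d-3)\,Q_{k-1,0}^d-(k+d-3)\,Q_{k-2,0}^d.
\]

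For the base cases I would simply invoke the explicit computations already carried out before the claim: $Q_{0,0}^d$ is the reference value, and the calculation for $k=1,j=0$ gave $Q_{1,0}^d=Q_{0,0}^d$. These anchor the induction at $k=0$ and $k=1$.

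For the inductive step, suppose $Q_{k-1,0}^d=Q_{k-2,0}^d=Q_{0,0}^d$ with $k\geq 2$. Substituting into the recurrence yields
\[
k\,Q_{k,0}^d=\bigl[(2k+d-3)-(k+d-3)\bigr]\,Q_{0,0}^d=k\,Q_{0,0}^d,
\]
whence $Q_{k,0}^d=Q_{0,0}^d$, completing the induction. There is really no obstacle here: the whole claim hinges on the arithmetic identity $(2k+d-3)-(k+d-3)=k$, which makes the two recurrence coefficients telescope exactly against the leading factor $k$, so that a constant sequence is automatically a solution once the two initial values agree.
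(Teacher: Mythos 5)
Your proof is correct and follows essentially the same route as the paper: strong induction anchored at the explicit computations of $Q_{0,0}^d$ and $Q_{1,0}^d$, then the first branch of the recurrence \eqref{eq:recursion} (valid since $k=j+1$ cannot occur for $j=0$, $k\geq2$) with the coefficients telescoping as $(2k+d-3)-(k+d-3)=k$. The only difference is expository: you spell out the telescoping identity and the inapplicability of the exceptional branch, which the paper leaves implicit.
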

Indeed, we 
have 
already proved it for $k=1$. We now use induction, so we let $k\geq2$ and 
assume that the claim is true up to $k-1$. From \eqref{eq:recursion}, we have
\[
 Q_{k,0}^d=\frac{1}{k}\left(\left(2k+d-3\right) Q_{k-1,0}^d 
-\left(k+d-3\right)Q_{k-2,0}^d\right)=Q_{0,0}^d,
\]
as wanted.

\begin{claim}\label{claim2}
For all $k\geq1$ we have $Q_{k,1}^d=Q_{1,1}^d$. 
\end{claim}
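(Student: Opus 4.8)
The plan is to prove the claim by induction on $k$, exactly parallel to the proof of Claim~\ref{claim1} but now accounting for the inhomogeneous term in \eqref{eq:recursion}. The case $k=1$ is the trivial identity $Q_{1,1}^d=Q_{1,1}^d$, and the ingredients we may feed into the recurrence are $Q_{0,1}^d=Q_{1,0}^d=Q_{0,0}^d$ (by the symmetry $Q_{k,j}^d=Q_{j,k}^d$ together with Claim~\ref{claim1}) and the already computed value $Q_{1,1}^d=(d-1)Q_{0,0}^d$.

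The decisive base case is $k=2$, which is the unique value with $k=j+1$ for $j=1$, so that the extra term in \eqref{eq:recursion} is active. Spelling out that branch gives
\[
2Q_{2,1}^d=(d+1)Q_{1,1}^d-(d-1)Q_{0,1}^d-\frac{\pi 2^{3-d}\Gamma(d)}{\Gamma\left(\frac{d}{2}-\frac12\right)^2}.
\]
Substituting $Q_{1,1}^d=(d-1)Q_{0,0}^d$ and $Q_{0,1}^d=Q_{0,0}^d$ collapses the first two terms to $(d-1)d\,Q_{0,0}^d$, so the claim $Q_{2,1}^d=Q_{1,1}^d$ reduces to verifying that the subtracted term equals exactly $(d-1)(d-2)Q_{0,0}^d$. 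I expect this gamma-function identity to be the only genuine obstacle. The plan for it is to insert $Q_{0,0}^d=2^{d-2}\mathrm B\left(\frac{d}{2},\frac{d}{2}-1\right)=2^{d-2}\Gamma\left(\frac{d}{2}\right)\Gamma\left(\frac{d}{2}-1\right)/\Gamma(d-1)$ and to use the duplication formula \eqref{eq:legendre} to rewrite $\Gamma\left(\frac{d-1}{2}\right)^2$ in terms of $\Gamma(d-2)$ and $\Gamma\left(\frac{d}{2}-1\right)$; after the elementary simplifications $\Gamma\left(\frac{d}{2}\right)=\tfrac{d-2}{2}\Gamma\left(\frac{d}{2}-1\right)$ and $\Gamma(d-1)=(d-2)\Gamma(d-2)$, both the subtracted term and $(d-1)(d-2)Q_{0,0}^d$ collapse to the same expression $2^{d-3}(d-1)(d-2)\Gamma\left(\frac{d}{2}-1\right)^2/\Gamma(d-2)$, forcing $Q_{2,1}^d=(d-1)Q_{0,0}^d=Q_{1,1}^d$.

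With the cases $k=1,2$ in hand, the inductive step for $k\geq3$ is immediate, since here $k\neq j+1$ and \eqref{eq:recursion} is the homogeneous recurrence $kQ_{k,1}^d=(2k+d-3)Q_{k-1,1}^d-(k+d-3)Q_{k-2,1}^d$. Assuming $Q_{k-1,1}^d=Q_{k-2,1}^d=Q_{1,1}^d$, the right-hand side becomes $\bigl[(2k+d-3)-(k+d-3)\bigr]Q_{1,1}^d=kQ_{1,1}^d$, whence $Q_{k,1}^d=Q_{1,1}^d$. Thus the exact same coefficient cancellation that drove Claim~\ref{claim1} propagates the common value along the row $j=1$, and the only new input is the verification that the inhomogeneous correction at $k=2$ has precisely the magnitude needed to keep $Q_{2,1}^d$ on the constant level $Q_{1,1}^d$.
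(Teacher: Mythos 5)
Your proposal is correct and follows essentially the same route as the paper: the case $k=2$ is resolved via the inhomogeneous branch of \eqref{eq:recursion} together with the duplication formula \eqref{eq:legendre} (your explicit verification that the subtracted gamma term equals $(d-1)(d-2)Q_{0,0}^d$ is exactly the computation the paper leaves implicit in the phrase ``where for the last equality we use \eqref{eq:legendre}''), and the cases $k\geq3$ follow by induction from the homogeneous recurrence via the cancellation $(2k+d-3)-(k+d-3)=k$. No gaps.
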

Indeed, we have
\[
 Q_{2,1}^d=\frac{1}{2}\left(\left(d+1\right) Q_{1,1}^d 
-\left(d-1\right)Q_{0,1}^d -\frac{\pi 
2^{3-d}\Gamma(d)}{\Gamma\left(\frac{d}{2}-\frac12\right)^2}\right)=Q_{1,1}^d,
\]
where for the last equality we use \eqref{eq:legendre}. Again by induction on 
$k$ we assume that $k\geq3$ and the claim is true up to $k-1$. Then, from 
\eqref{eq:recursion}
\[
 Q_{k,1}^d=\frac{1}{k}\left(\left(2k+d-3\right) Q_{k-1,j}^d 
-\left(k+d-3\right)Q_{k-2,j}^d\right)=Q_{1,1}^d,
\]
as wanted.

\begin{claim}\label{claim3} 
The lemma holds for $0\leq k\leq j$.
\end{claim}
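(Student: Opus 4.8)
The plan is to establish Claim~\ref{claim3} by induction on the first index $k$ with the second index $j$ held fixed, taking advantage of the fact that throughout the range $0\le k\le j$ the extra term in the recursion \eqref{eq:recursion} never intervenes: indeed $k\le j$ forces $k\neq j+1$, so only the top line of \eqref{eq:recursion} is ever used. Thus for fixed $j$ I would prove that $Q_{k,j}^d=\binom{d+k-2}{k}Q_{0,0}^d$ for every $0\le k\le j$.

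First I would settle the two base cases $k=0$ and $k=1$ using the symmetry $Q_{k,j}^d=Q_{j,k}^d$ together with the previous two claims. By Claim~\ref{claim1}, $Q_{0,j}^d=Q_{j,0}^d=Q_{0,0}^d$, which is $\binom{d-2}{0}Q_{0,0}^d$; and for $j\ge1$, Claim~\ref{claim2} gives $Q_{1,j}^d=Q_{j,1}^d=Q_{1,1}^d=(d-1)Q_{0,0}^d=\binom{d-1}{1}Q_{0,0}^d$. Both agree with the target value $\binom{d+k-2}{k}Q_{0,0}^d$.

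For the inductive step I would fix $2\le k\le j$ and assume the formula at $k-1$ and $k-2$ (same $j$). Since $k\neq j+1$, the recursion reads
\[
k\,Q_{k,j}^d=(2k+d-3)\,Q_{k-1,j}^d-(k+d-3)\,Q_{k-2,j}^d,
\]
and substituting $Q_{k-1,j}^d=\binom{d+k-3}{k-1}Q_{0,0}^d$ and $Q_{k-2,j}^d=\binom{d+k-4}{k-2}Q_{0,0}^d$ reduces everything to the binomial identity
\[
(2k+d-3)\binom{d+k-3}{k-1}-(k+d-3)\binom{d+k-4}{k-2}=k\binom{d+k-2}{k}.
\]

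The only step needing genuine care is this last identity, and I expect it to be the main (if routine) obstacle. The clean route is to divide through by $\binom{d+k-4}{k-2}$, using $\binom{d+k-3}{k-1}=\frac{d+k-3}{k-1}\binom{d+k-4}{k-2}$ and $\binom{d+k-2}{k}=\frac{(d+k-2)(d+k-3)}{k(k-1)}\binom{d+k-4}{k-2}$; after clearing the factor $k-1$ the identity collapses to the quadratic equality
\[
(2k+d-3)(d+k-3)-(k+d-3)(k-1)=(d+k-2)(d+k-3),
\]
which both sides verify upon expansion. This closes the induction and proves the formula for all $0\le k\le j$; specializing to $j=k$ then records the diagonal values $Q_{k,k}^d=\binom{d+k-2}{k}Q_{0,0}^d$ as well.
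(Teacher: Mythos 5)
Your proposal is correct and follows essentially the same route as the paper: base cases $k=0,1$ from Claims~\ref{claim1} and \ref{claim2} via the symmetry $Q_{k,j}^d=Q_{j,k}^d$, then induction on $k$ (for fixed $j$) using only the top line of \eqref{eq:recursion} since $k\le j$ rules out $k=j+1$. The paper states the resulting binomial identity without proof, whereas you verify it explicitly; your verification is correct.
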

Indeed, from Claims~\ref{claim1} and \ref{claim2} we know this for $k=0,1$. 
Again using induction and \eqref{eq:recursion}, as long as $k\leq j$ we have
\begin{align*}
 Q_{k,1}^d=&\frac{1}{k}\left(\left(2k+d-3\right) Q_{k-1,j}^d 
-\left(k+d-3\right)Q_{k-2,j}^d\right)\\
=&\frac{Q_{0,0}^d}{k}\left(\left(2k+d-3\right)\binom{d+k-3}{k-1}
-\left(k+d-3\right)\binom{d+k-4}{k-2}\right)\\
 =&\binom{d+k-2}{k}Q_{0,0}^d,
\end{align*}
as wanted.

\begin{claim}\label{claim4}
For all $j\geq1$ we have $Q_{j+1,j}^d=Q_{j,j}^d$.
\end{claim}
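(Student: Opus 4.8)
The plan is to apply the recursion \eqref{eq:recursion} in the single case where the correction term is present, namely $k=j+1$, and then reduce everything to quantities already evaluated in Claim~\ref{claim3}. Setting $k=j+1$ gives
\[
(j+1)\,Q_{j+1,j}^d=(2j+d-1)\,Q_{j,j}^d-(j+d-2)\,Q_{j-1,j}^d-\frac{\pi\,2^{3-d}\,\Gamma(d+j-1)}{\Gamma(j+1)\,\Gammaf{d-1}{2}^2}.
\]
Both Gegenbauer indices appearing on the right are at most $j$, so Claim~\ref{claim3} applies and yields $Q_{j,j}^d=\binom{d+j-2}{j}Q_{0,0}^d$ and $Q_{j-1,j}^d=\binom{d+j-3}{j-1}Q_{0,0}^d$. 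The goal is then to show that the whole right-hand side equals $(j+1)\binom{d+j-2}{j}Q_{0,0}^d=(j+1)Q_{j,j}^d$.

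The only genuinely new computation is to rewrite the correction term as a multiple of $Q_{0,0}^d$. I would use Legendre's duplication formula \eqref{eq:legendre} to eliminate $\Gammaf{d-1}{2}^2$, together with the closed form $Q_{0,0}^d=2^{d-2}\mathrm B\left(\frac d2,\frac d2-1\right)=2^{d-2}\Gammaf{d}{2}\Gamma\left(\frac d2-1\right)/\Gamma(d-1)$ obtained at the start of the proof; after cancelling the powers of $2$ and the factor $\pi$, a short manipulation of Gamma quotients should collapse the correction term exactly to $(d-2)\binom{d+j-2}{j}Q_{0,0}^d$.

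Substituting these expressions back, the bracketed coefficient of $Q_{0,0}^d$ becomes
\[
(2j+d-1)\binom{d+j-2}{j}-(j+d-2)\binom{d+j-3}{j-1}-(d-2)\binom{d+j-2}{j}.
\]
Applying the elementary identity $(j+d-2)\binom{d+j-3}{j-1}=j\binom{d+j-2}{j}$ and simplifying $2j+d-1-(d-2)=2j+1$, this reduces to $(2j+1-j)\binom{d+j-2}{j}=(j+1)\binom{d+j-2}{j}$, which is precisely $(j+1)Q_{j,j}^d$; dividing by $j+1$ gives $Q_{j+1,j}^d=Q_{j,j}^d$ and finishes the claim. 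I expect the main obstacle to be purely the bookkeeping in the correction term, that is, tracking the powers of $2$ and assembling the Gamma quotients so that the duplication formula produces the clean factor $(d-2)\binom{d+j-2}{j}$; the binomial algebra afterwards is routine.
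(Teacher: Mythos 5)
Your proposal is correct and follows essentially the same route as the paper: you apply the recursion \eqref{eq:recursion} at $k=j+1$, invoke Claim~\ref{claim3} for $Q_{j,j}^d$ and $Q_{j-1,j}^d$, collapse the correction term to $(d-2)\binom{d+j-2}{j}Q_{0,0}^d$ via Legendre's duplication formula \eqref{eq:legendre}, and finish with the identity $(j+d-2)\binom{d+j-3}{j-1}=j\binom{d+j-2}{j}$, exactly as the paper does. The Gamma-function bookkeeping you anticipate does indeed work out, so there is no gap.
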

Indeed, using 
\eqref{eq:recursion} and Claim~\ref{claim3} and denoting
\[
 R=\frac{\pi 
2^{3-d}\Gamma(d+j-1)}{\Gamma(j+1)\Gamma\left(\frac{d}{2}-\frac12\right)^2},
\]
we have
\begin{align*}
 Q_{j+1,j}^d=&\frac{Q_{0,0}^d}{j+1}\left(\left(2j+d-1\right) \binom{d+j-2}{j} 
-\left(j+d-2\right)\binom{d+j-3}{j-1} 
-R\right)\\=&\frac{Q_{0,0}^d}{j+1}\left(\left(j+d-1\right) \binom{d+j-2}{j} 
-R\right).
 \end{align*}
From \eqref{eq:legendre} we have
\begin{align*}
 R=\frac{\pi 
2^{3-d}\Gamma(d+j-1)}{\Gamma(j+1)\Gamma\left(\frac{d}{2}-\frac12\right)^2}
=
(d-2)\binom{d+j-2}{j}Q_{0,0}^d.
\end{align*}
We have then proved that
\[ 
Q_{j+1,j}^d=\frac{Q_{0,0}^d}{(j+1)}\binom{d+j-2}{j}
\left(j+d-1-(d-2)\right)=\binom{d+j-2}{j}Q_{0,0}^d=Q_{j,j}^d,
\]
as claimed.

\begin{claim} \label{claim5}
For all $\ell\geq0$ we have $Q_{j+\ell,j}=Q_{j,j}$.
\end{claim}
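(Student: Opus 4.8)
The plan is to prove Claim~\ref{claim5} by induction on $\ell\geq0$, relying on the recursion \eqref{eq:recursion} together with the already established Claims~\ref{claim3} and \ref{claim4}. The case $j=0$ is nothing but Claim~\ref{claim1}, so I would fix $j\geq1$ throughout. The two base cases are then immediate: for $\ell=0$ the identity $Q_{j,j}^d=Q_{j,j}^d$ is a tautology, and for $\ell=1$ it is exactly Claim~\ref{claim4}.

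For the inductive step I fix $\ell\geq2$ and assume $Q_{j+m,j}^d=Q_{j,j}^d$ for every $0\leq m<\ell$. Setting $k=j+\ell$ in \eqref{eq:recursion}, the crucial observation is that $k=j+\ell\geq j+2$, so we are in the regime $k\neq j+1$ where the inhomogeneous correction term of \eqref{eq:recursion} is absent; moreover $k\geq2$, so the recursion does apply. This gives
\[
(j+\ell)\,Q_{j+\ell,j}^d=(2j+2\ell+d-3)\,Q_{j+\ell-1,j}^d-(j+\ell+d-3)\,Q_{j+\ell-2,j}^d.
\]
Since $\ell-1$ and $\ell-2$ are nonnegative and strictly smaller than $\ell$, the induction hypothesis yields $Q_{j+\ell-1,j}^d=Q_{j+\ell-2,j}^d=Q_{j,j}^d$. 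Substituting and collecting coefficients,
\[
(j+\ell)\,Q_{j+\ell,j}^d=\bigl[(2j+2\ell+d-3)-(j+\ell+d-3)\bigr]Q_{j,j}^d=(j+\ell)\,Q_{j,j}^d,
\]
and dividing by $j+\ell>0$ closes the induction.

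The main point to watch is not a genuine obstacle but the bookkeeping around the diagonal index $k=j+1$: this is the only place where the inhomogeneous term of \eqref{eq:recursion} intervenes, and it is absorbed once and for all by Claim~\ref{claim4}. Every subsequent step ($\ell\geq2$) sees only the homogeneous recursion, whose two coefficients $2k+d-3$ and $-(k+d-3)$ add up precisely to $k=j+\ell$, which is exactly why the identity collapses. Finally, combining Claim~\ref{claim5} with Claim~\ref{claim3} and the symmetry $Q_{k,j}^d=Q_{j,k}^d$ gives $Q_{k,j}^d=\binom{d+\min(k,j)-2}{\min(k,j)}Q_{0,0}^d$ for all $k,j$, which completes the proof of Lemma~\ref{lem:Q}.
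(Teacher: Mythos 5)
Your proof is correct and follows essentially the same route as the paper's: induction on $\ell$ with Claim~\ref{claim4} as the base case, the homogeneous part of \eqref{eq:recursion} in the inductive step (valid since $k=j+\ell\neq j+1$ when $\ell\geq2$), and the observation that the coefficients satisfy $(2k+d-3)-(k+d-3)=k$, which collapses the identity. The only difference is cosmetic: you explicitly dispatch the case $j=0$ via Claim~\ref{claim1}, a point the paper leaves implicit since Claim~\ref{claim4} is stated only for $j\geq1$.
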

Indeed, from Claim~\ref{claim4} the equality holds for $\ell=1$. Reasoning by 
induction on $\ell$, assume that the equality holds up to $\ell-1$. From 
\eqref{eq:recursion} and Claims~\ref{claim3} and \ref{claim4} we have
\[
 Q_{j+\ell,j}=\frac{1}{j+\ell}\left(\left(2j+2\ell+d-3\right) Q_{j,j}^d 
-\left(j+\ell+d-3\right)Q_{j,j}^d\right)=Q_{j,j}^d.
\]
This finishes the proof of Claim~\ref{claim5} and of the lemma.

%
\end{proof}


\begin{proof}[Proof of Theorem~\ref{th:case2}]
Note that by reordering the terms:
\begin{align*}
 \sum_{j,k=0}^\infty a_ka_j 
Q_{k,j}^{d}=\;&\sum_{\ell=0}^\infty\left(a_\ell^2Q_{\ell,\ell}^d+\sum_{j>\ell}
a_\ell a_j Q_{\ell,j}^{d}+\sum_{k>\ell}a_ka_\ell Q_{k,\ell}^{d}\right)\\
 \underset{\text{Lemma \ref{lem:Q}}}{=}\;&
 Q_{0,0}^d\sum_{\ell=0}^\infty 
a_{\ell}\binom{d+\ell-2}{\ell}\left(a_\ell+2\sum_{j>\ell} a_j \right).
 \\
\end{align*}
 We then have
 \[
  \mathbb{E}_{x\in(\Sd)^n}(E_2(x))= \frac{\omega_{d-1}}{2\omega_d}\left(n^2 
2^{d-2} {\mathrm B}\left(\frac{d}{2},\frac{d}{2}-1\right)-\sum_{j,k=0}^\infty 
a_ka_j Q_{k,j}^{d}\right)=
\]
\[
 \frac{\omega_{d-1}Q_{0,0}^d}{2\omega_d}\left(n^2 -\sum_{\ell=0}^\infty 
a_{\ell}\binom{d+\ell-2}{\ell}\left(a_\ell+2\sum_{j>\ell} a_j \right)\right).
\]
Use \eqref{eq:Q00omegas} to finish the proof.
\end{proof}


The following gives an alternative formula for the expected value computed in 
Theorem~\ref{th:rieszsharmonic} for the case $s=2$ as well as an asymptotic 
estimate. Note that the harmonic kernel $K_L$ is obtained when in the general 
setting of this section we let
\begin{equation}\label{eq:armonicas}
  a_k=\begin{cases}\frac{2k+d-1}{d-1} &k\leq L\\0&k>L\end{cases}.
 \end{equation}
 
From Theorem~\ref{th:case2} we readily have:

\begin{corollary}\label{cor:s2harmonicassymptotics}
 Let $x=(x_1,\ldots,x_n)\in (\S^d)^n$ be $n=\pi_L$ points generated by the determinantal 
random point process associated to harmonic kernel $K_L.$ Then, 
$\mathbb{E}_{x\in(\S^d)^n}(E_2(x))$ equals
 \[
 V_2(\S^d)\left(n^2 -\sum_{\ell=0}^L 
\frac{2\ell+d-1}{d-1}\binom{d+\ell-2}{\ell}\left(\frac{2\ell+d-1}{d-1}+2\sum_{
j>\ell} \frac{2j+d-1}{d-1} \right)\right).
 \]
\end{corollary}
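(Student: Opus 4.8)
The plan is to obtain the formula by specializing Theorem~\ref{th:case2} to the coefficient sequence that encodes the harmonic kernel. As recorded in \eqref{eq:armonicas}, in the general isotropic framework of Theorem~\ref{th:rieszsgeneral1} the kernel $K_L$ corresponds to the choice
\[
a_k=\begin{cases}\frac{2k+d-1}{d-1} & k\leq L,\\ 0 & k>L,\end{cases}
\]
and this sequence satisfies \eqref{eq:form} together with the trace condition \eqref{traceequaln}: indeed $\sum_{k=0}^L\frac{2k+d-1}{d-1}\binom{d+k-2}{k}=\sum_{k=0}^L h_k=\pi_L=n$.

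First I would invoke Theorem~\ref{th:case2} (valid for $d\ge 3$, which is the relevant range since $s=2<d$ forces $d\geq3$), writing $\mathbb{E}_{x\in(\Sd)^n}(E_2(x))$ as $V_2(\S^d)$ times $n^2-\sum_{\ell=0}^\infty a_\ell\binom{d+\ell-2}{\ell}(a_\ell+2\sum_{j>\ell}a_j)$. Next I would simplify the double sum using the explicit $a_k$: since $a_\ell=0$ for $\ell>L$ the outer sum collapses to $\sum_{\ell=0}^L$, and within each surviving term the inner sum $\sum_{j>\ell}a_j$ receives contributions only from indices $\ell<j\le L$. Replacing $a_\ell$ by $\frac{2\ell+d-1}{d-1}$ and each $a_j$ by $\frac{2j+d-1}{d-1}$ in the surviving terms then reproduces the stated expression verbatim.

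I do not expect any genuine difficulty here: the corollary is a direct specialization of Theorem~\ref{th:case2}. The only point deserving a word of care is notational, namely that the symbol $\sum_{j>\ell}$ in the final display is to be read as $\sum_{\ell<j\le L}$, the tail beyond $L$ contributing nothing because there $a_j=0$.
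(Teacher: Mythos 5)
Your proposal is correct and follows exactly the paper's route: the paper derives this corollary by the one-line remark ``From Theorem~\ref{th:case2} we readily have,'' i.e.\ by substituting the harmonic coefficients \eqref{eq:armonicas} into Theorem~\ref{th:case2}, precisely as you do (including the observation that the sums truncate at $L$ and that the trace condition $\sum_{k\le L}h_k=\pi_L=n$ holds).
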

 \begin{remark}
 It is possible to recover  the asymptotic estimate of 
Theorem~\ref{th:asymptoticrieszsharmonic} from 
Corollary~\ref{cor:s2harmonicassymptotics} in the case $s=2$, that is
 \[  
\mathbb{E}_{x\in(\S^d)^n}(E_2(x))=V_2(\S^d)\left(n^2-\frac{4n^{1+2/d}}{
(d+2)(d-1)}\left(\frac{d!}{2}\right)^{2/d}\right)+o(n^{1+2/d}).
 \] 
 \end{remark}


\subsubsection{Optimality of the harmonic kernel. Theorem~\ref{th:optimal}} We now 
prove that the harmonic kernel gives optimal values of the expected $2$-energy 
among rotationally invariant kernels.

\begin{proof}[Proof of Theorem~\ref{th:optimal}]
 Let $r\in\N$ be such that $a_j=b_j=0$ for $j\geq r$, and assume that $a\neq 
b$. Note from \eqref{eq:esperanzaQs} and Lemma \ref{lem:Q} that 
$\mathbb{E}_a<\mathbb{E}_b$ is equivalent to $F(a)\geq F(b)$ where
 \[
  F(x)=x^TMx,
 \]
 where $M=M_r$ is the symmetric matrix given by
 \[   
M=\begin{pmatrix}\binom{d+\min(k,j)-2}{\min(k,j)}_{k,j=0,\ldots,r} 
\end{pmatrix}=\begin{pmatrix}\binom{d-2}{0} &\binom{d-2}{0}& 
\binom{d-2}{0}&\cdots&\binom{d-2}{0}\\ 
\binom{d-2}{0}&\binom{d-1}{1}&\binom{d-1}{ 
1}&\cdots&\binom{d-1}{1}\\\binom{d-2}{0}&\binom{d-1}{1}&\binom{d}{2} 
&\cdots&\binom{d}{2}\\\vdots&\vdots&\vdots&\ddots&\vdots\\ 
\binom{d-2}{0}&\binom{d-1}{1}&\binom{d}{2}&\cdots&\binom{d+r-2}{r}\end{pmatrix}
\in\Z^{(r+1)\times (r+1)}.
 \]
 We also consider the vector 
 \[
V=\left(\binom{d-2}{0},\binom{d-1}{1},\binom{d}{2},\ldots,\binom{d+r-2}{r}
\right)\in\Z^{r+1},
 \]
and, for $0\leq i<j\leq r$ we let $w_{ij}\in\R^{r+1}$ be the vector all of whose components 
are zero except for the $i$th component and the $j$th component that satisfy:
 \[
  (w_{ij})_i=\binom{d+i-2}{i}^{-1},\quad (w_{ij})_j=-\binom{d+j-2}{j}^{-1}.
 \]
Note that, for coherence in the exposition, we are numbering the entries of $w_{ij}$ from $0$ to $r$ instead of doing it from $1$ to $r+1$. Then,
\begin{equation}\label{eq:wV}
 w_{ij}^TV=0,\quad \forall\,i,j,\;0\leq i< j\leq r.
\end{equation}
 An elementary computation shows that if $i<j$ then all the components of the 
vector $Mw_{ij}\neq0$ are positive or zero. We thus have
 \[
  x^TMw_{ij}\geq0\quad\text{ for all } x\in[0,\infty)^{r+1}.
 \]
Note now that if $x\in[0,\infty)^{r+1}$ then for all $i,j$ we have
\begin{equation}\label{eq:DF}
 DF(x)(w_{ij})=w_{ij}^TMx+x^TMw_{ij}=2x^TMw_{ij}\geq0.
\end{equation}
Moreover, if $x_i>0$ then $DF(x)(w_{ij})>0$ and the function is strictly 
increasing in that direction. We will now construct a sequence
\[
 b=x^0,x^1,\ldots,x^t=a
\]
with the property that $x^k-x^{k-1}$ is a non-negative multiple of $w_{ij}$ 
for some $i,j$ with  $i<j$. Although the coordinates of $x^0,x^t$ have a 
particular form given by \eqref{eq:form}, the coordinates of $x^k$ are just 
non-negative real numbers. For the construction, let $i$ be the first index 
such that $a_i\neq0,b_i=0$ and let $j$ be the greatest index such that 
$b_j\neq0$ and $a_j=0$ (these $i,j$ exist because $a\neq b$ and the hypotheses 
$\sum a_i\binom{d+i-2}{i}=\sum b_i\binom{d+i-2}{i}=n$). Note also from 
\eqref{eq:if} that necessarily $j>i$. Then, we define
\[
 x^1=x^0+\left(1+\frac{2i}{d-1}\right)\binom{d+i-2}{i}w_{ij}\in[0,\infty)^{r+1},
\]
and in general to construct $x^{k+1}$ from $x^k$ we let $i$ be the smallest 
index, among those such that $a_i\neq0$, such that $x^k_i\neq a_i=1+2i/(d-1)$ 
and $j$ the greatest index such that, $x^k_j>0$ and $a^k_j=0$, if those indices 
exist. It can be easily seen by induction that \eqref{eq:if} is also satisfied 
changing $b$ to $x^k$ and thus, if such $i,j$ exist, we have $i<j$. Then, we let
\[
 x^{k+1}=x^k+\lambda w_{ij}, \quad 
\lambda=\min\left(\binom{d+j-2}{j}x^k_j,\left(1+\frac{2i}{d-1}
-x^k_i\right)\binom{d+i-2}{i}\right).
\]
%
%
%

From \eqref{eq:DF} and the comment after \eqref{eq:DF} it is clear that
\[
F(b)=F(x^0)<F(x^1)\leq F(x^2)\leq \cdots\leq F(x^k),\quad \forall\;k\in\N.
\]
We just have to prove that the sequence satisfies $x^k=a$ for some $k\in\N$. 
First note that from \eqref{eq:wV} we have for all $k\in\N$:
\begin{equation}\label{eq:xk}
 (x^k)^T V=(x^{k-1})^T V=\cdots=x^ 0 V=n,\quad\text{that is,}\quad \sum_{i=0}^r 
x^k_i\binom{d+i-2}{i}=n.
\end{equation}
Moreover, by construction we have $x^k_i\in [0,1+2i/(d-1)]$ for all $k$ and 
$i$. On the other hand, the sequence can only finish if some of the indices 
$i,j$ in the construction does not exist. Namely, the sequence stops when:
\begin{enumerate}
 \item[(i)] For every $i$ such that $a_i\neq0$ we have $x^k_i=a_i$, or
 \item[(ii)] For every $j$ such that $x^k_j\neq0$ we have $a_j>0$.
\end{enumerate}
From \eqref{eq:xk} and the similar equality $\sum_{i=0}^r 
a_i\binom{d+i-2}{i}=n$ it is clear that these two conditions are equivalent and 
actually imply $x^k=a$. This proves that, if the sequence finishes, then its 
last element is equal to $a$. Now note that at each iteration $x^k\mapsto 
x^{k+1}$, either one coordinate of $x^{k}$ is set to $0$ (and in this case, 
that 
coordinate remains untouched in further iterations), or one of them is set to 
its maximum value $1+2i/(d-1)$ (which, again, implies that this coordinate 
remains untouched in further iterations). So the number of iterations of the 
sequence is at most the total number of coordinates in $a$ or $b$, that is at 
most $r+1$. This finishes the proof of the theorem.
\end{proof}

\begin{figure}
    \begin{center}
           \includegraphics[width=0.9\textwidth]{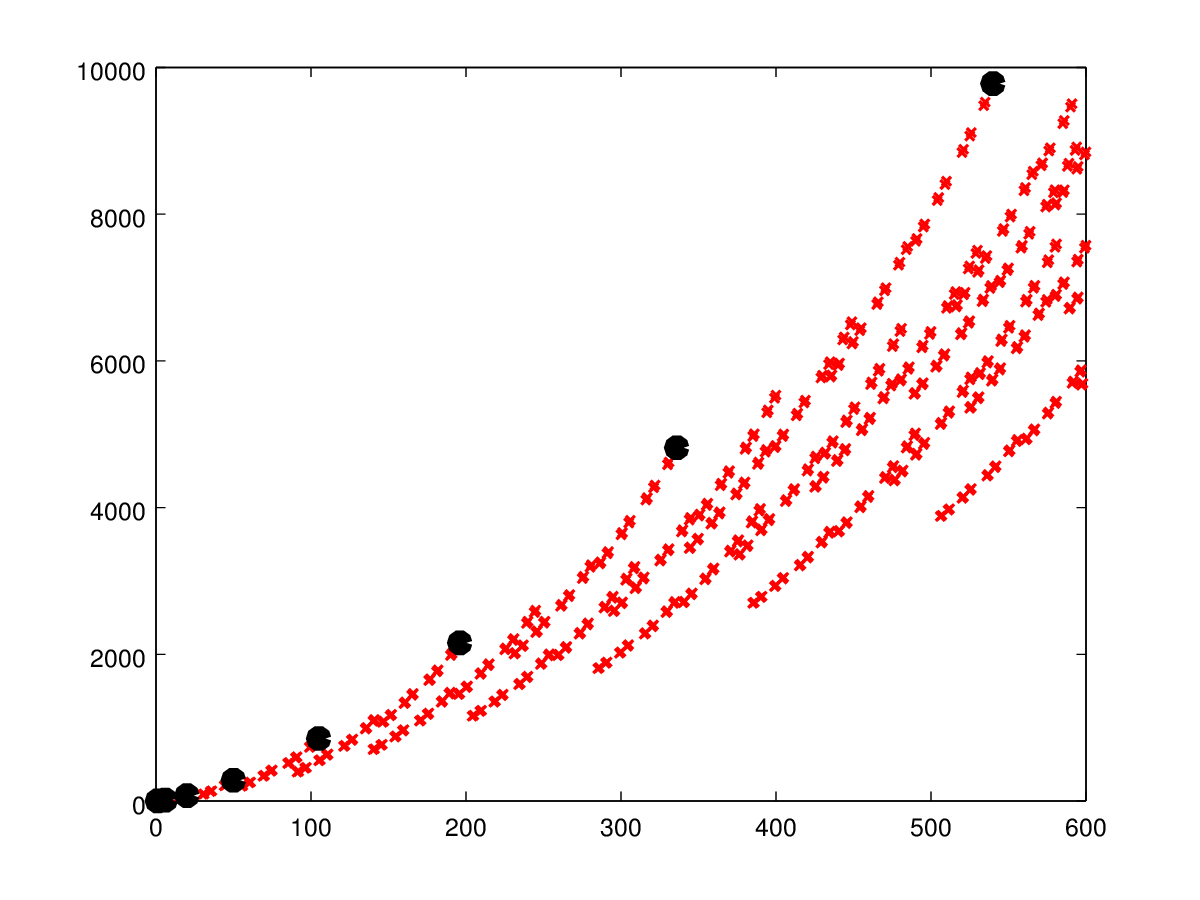}
          \caption{Value of $\sum_{\ell=0}^\infty 
a_{\ell}\binom{d+\ell-2}{\ell}\left(a_\ell+2\sum_{j>\ell} a_j \right)$ for 
different kernels in dimension $d=4$. Black marks correspond to the harmonic 
kernels, i.e. when the $a_k$'s are given by \eqref{eq:armonicas},  while red 
marks correspond to other kernels. For some values of $n$ (for example, $n=4$) 
there is no kernel that attains this number of points. For other values of $n$ 
(for example, for $n=6$) there is only one such kernel. For yet another 
collection of values of $n$ (including for example $n=196$ and $n=540$) there 
are several choices of kernels which produce this number of points. In these 
two particular values, one of the choices corresponds to the harmonic kernel. 
The optimality of the harmonic kernel proved in Theorem~\ref{th:optimal} is 
clearly visible: the sum is maximal when the harmonic kernel is used, hence 
from 
Theorem~\ref{th:case2} the expected value of $E_2$ is 
minimal.}\label{fig:Casodiguala4}
    \end{center}
\end{figure}
\begin{figure}
    \begin{center}
           \includegraphics[width=0.9\textwidth]{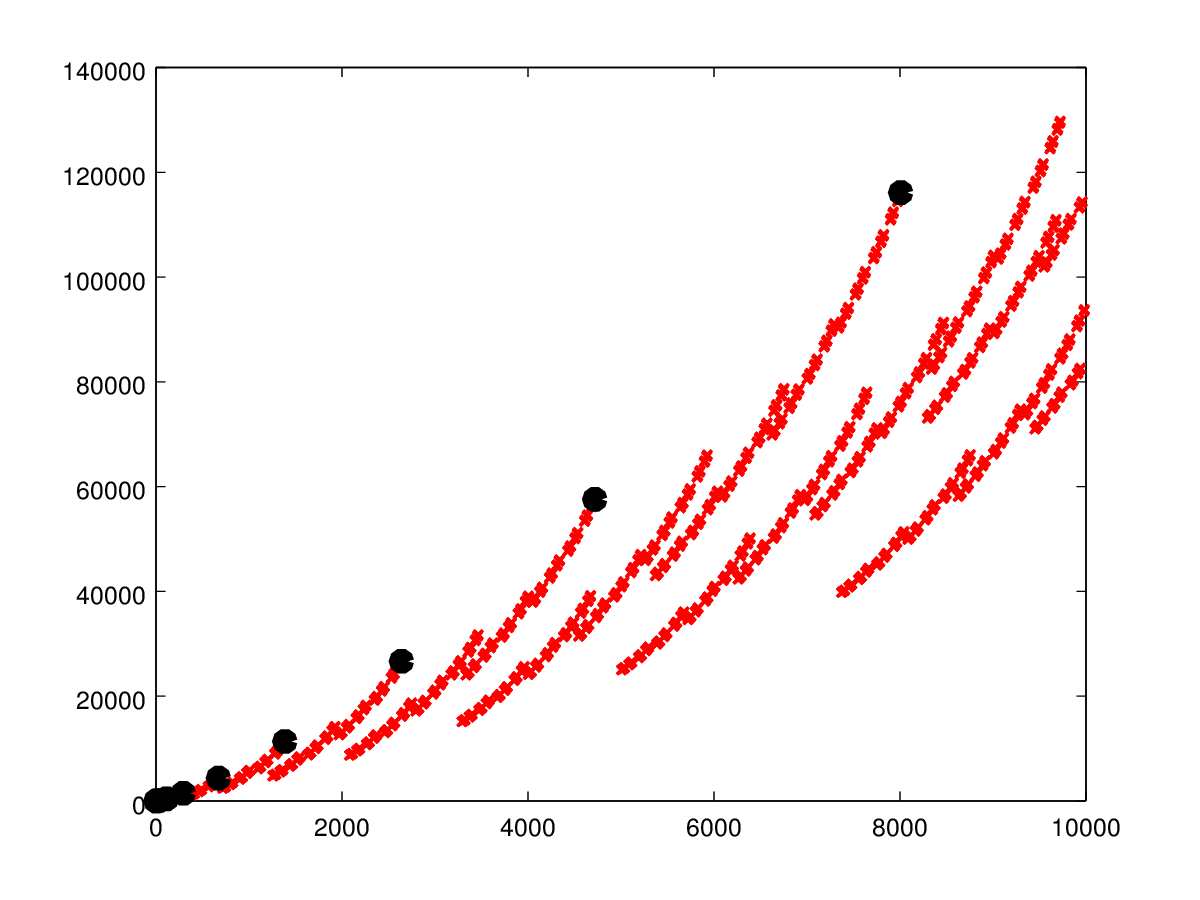}
          \caption{Same as Figure \ref{fig:Casodiguala4} but for dimension 
$d=6$. Again the optimality of the harmonic kernel is clearly 
visible.}\label{fig:Casodiguala6}
    \end{center}
\end{figure}

\subsection{Linear Statistics. Propositions~\ref{prop:roughvariance} and \ref{prop:smooth}}

The objective is to estimate the asymptotic behavior of the variance of the number of points of the harmonic ensemble
to be found in a spherical cap. To get this estimate we study the trace
  $\tr (\mathcal{K}_A-\mathcal{K}_A^2)$, where $\mathcal{K}_A$ is the 
integral operator of concentration on the spherical cap $A\subset \S^d$,
\[
\mathcal{K}_A Q(u)=\int_A Q(v) K_L(u,v)d\mu(v),\;\;Q\in \Pi_L.
\]

  The proof of Proposition~\ref{prop:roughvariance} is
  similar to \cite[Proposition 3.1.]{Mar07}. The idea in that paper was, following 
Landau's work, to study the density of discrete sets (Marcinkiewicz-Zygmund and interpolating arrays) 
relating the density with spectral properties of the concentration operator
in ``small'' spherical caps. The hypothesis are now different (we consider ``big'' spherical caps also) so we will sketch the proof.

\begin{proof}[Proof of Proposition~\ref{prop:roughvariance}]
   It is clear that the variance of the random variable, $n_A,$ counting the 
number of points in $A,$ is invariant by rotations of $A$, because the process 
is also invariant, so to compute $\operatorname{Var}(n_A)$ we assume that 
$A=B(\mathbf{n},\theta_L)$ with $\mathbf{n}$ being the north pole and 
$\theta_L\in[0,\pi]$. Denote $\theta_L=\alpha_L/ L$ with $\alpha_L=O(L)$, and
$\alpha_L\to \infty$ when $L\to \infty$. The following formula to compute the 
covariance can be found in 
\cite[Formula~28]{RyderVirag07}
\begin{equation}				\label{covariance_formula}
  \operatorname{Cov}(f,g)=\frac{1}{2}\int_{\S^d} \int_{\S^d} 
(f(x)-f(y))(g(x)-g(y)) 
|K_L(x,y)|^2 d\mu(x)d\mu(y),
\end{equation}
  for bounded $f,g$. In particular
\[
\operatorname{Var}(n_A)=\operatorname{Cov}(\chi_A,\chi_A)=\int_{A}\int_{A^c} |K_L(x,y)|^2 
d\mu(x)d\mu(y)=\tr (\mathcal{K}_A-\mathcal{K}_A^2).
\]

By rotation invariance 
\begin{align*}
  \int_{A} & \int_{A^c} |K_L(x,y)|^2 d\mu(x)d\mu(y) \\
= & A_{d,L}^2\int_0^{\theta_L} \sin^{d-1} 
\eta \left(
\int_{\theta_L-\eta}^\pi |P_L^{(1+\lambda,\lambda)}(\cos \theta)|^2 \sin^{d-1} 
\theta d\theta \right) d\eta,
\end{align*}
where
\[
 A_{d,L}=\frac{\pi_L \omega_{d-1}}{\binom{L+\frac{d}{2}}{L} 
\omega_d}=\frac{2^{1-d}}{\Gamma\left( \frac{d}{2} \right)}L^{d/2}+o(L^{d/2}).
\]
For some fixed $c>0,$ we split the inner integral above in three 
summands
corresponding to
\[
\left\{ \frac{c}{L}\le \theta \le \pi-\frac{c}{L} \right\}=\mbox{I},\;\; \left\{ \theta > \pi-\frac{c}{L} \right\}=\mbox{II},
 \;\;\mbox{and}\;\;\left\{ \frac{c}{L} < \theta \right\}=\mbox{III}.
\]

  The integral in $\mbox{I}$ can be bounded above, by using 
classical estimates for 
Jacobi polynomials \eqref{Jacobiestimate}
\begin{align*}
    \frac{A_{d,L}^2}{L^d} &  \int_{0}^{\alpha_L} \eta^{d-1} 
\int_{\max(\frac{\alpha_L-\eta}{L},\frac{c}{L})}^{\pi-\frac{c}{L}}
|P_L^{(1+\lambda,\lambda)}(\cos \theta)|^2 \sin^{d-1} \theta d\theta d\eta
\\
&
\le 
\frac{A_{d,L}^2 2^{d-1}}{L^d}  \int_{0}^{\alpha_L} \eta^{d-1} 
\int_{\max(\frac{\alpha_L-\eta}{L},\frac{c}{L})}^{\pi-\frac{c}{L}}
\frac{1}{\pi L \sin^2 \frac{\theta}{2}} d\theta d\eta
\\
&
=
\frac{A_{d,L}^2 2^{d-1}}{\pi L^{d+1}}\int_0^{\alpha_L} \eta^{d-1}  \left( 
\cot\left( 
\frac{\max\{\alpha_L-\eta,c\}}{2L} \right) -\tan\left( \frac{c}{2 L} \right) 
\right) d\eta.
\end{align*} 
The main term comes from the first summand of this last integral 
because the second is of order $L^{-2}\alpha_L^d$. For the first summand we 
split the integral again and do a change of variables
\begin{align}				\label{provis}
\int_0^{\alpha_L} & \eta^{d-1}   \cot\left( 
\frac{\max\{\alpha_L-\eta,c\}}{2L} \right) 
 d\eta\nonumber
\\
&
= 
 \int_{c}^{\alpha_L}  (\alpha_L-\eta)^{d-1} 
\cot\left(\frac{\eta}{2L}\right)  d\eta+
  \int_{\alpha_L-c}^{\alpha_L} \eta^{d-1} \cot\left( 
\frac{c}{2L} \right) d\eta,
\end{align}  
  and that the second summand in above is of order $L \alpha_L^{d-1}$. 
  For the first summand in (\ref{provis}) we expand the polynomial in $\eta$ and use the 
estimate
  $x \cot x\le 1$ for $x\in [0,\pi/4]$ 
  to get the bound
\begin{align*}
  \alpha_L^{d-1} & \int_{c}^{\alpha_L} 
\cot\left(\frac{\eta}{2L}\right)  d\eta+L O(\alpha_L^{d-1})=
2 L \alpha_L^{d-1}  \log 
\frac{\sin(\frac{\alpha_L}{2L})}{\sin(\frac{c}{2L})}+L O(\alpha_L^{d-1})
\\
&
=
2 L \alpha_L^{d-1} \log \alpha_L+L O(\alpha_L^{d-1}).
\end{align*}
  For the integral in $\mbox{II}$ we bound the Jacobi polynomial by $C 
L^{2\lambda}$ getting a term of order $L^{-2} \alpha^d_L=O(\alpha^{d-2}_L)$. 
 Finally, in $\mbox{III}$ we bound the Jacobi polynomial by its maximum $C 
L^{d/2}$ getting another term $O(\alpha^{d-1}_L)$.
 
 Putting all together we get
\[
\operatorname{Var}(n_A)\le \frac{2^{2-d}}{\pi 
\Gamma\left(\frac{d}{2}\right)^2}\alpha_L^{d-1}\log 
\alpha_L+O(\alpha_L^{d-1}).
\]
\end{proof}


We now turn to the smooth case.

\begin{proof}[Proof of Proposition~\ref{prop:smooth}]
Given a bounded function $\phi:\S^d\rightarrow \mathbb R$ we denote by 
$T_{\phi}$ the
Toeplitz operator on $\Pi_L$ with symbol
$\phi$, i.e. $T_{\phi}(h):=\K_{L}(\phi h)$ where $\K_{L}$ denotes
the orthogonal projection from $L^{2}(\S^d)$ to $\Pi_L$
\[
\K_{L} f(x)=\int_{\S^d} K_L(x,y)f(y)d\mu(y),\;\;f\in L^2(\S^d),
\]
i.e. $T_{\phi}$ is the self-adjoint operator on $\Pi_L$ determined
by 
\[
\left\langle T_{\phi} P, P \right\rangle =\left\langle 
\phi P,P \right\rangle 
\]
for any $P\in \Pi_L$. Then it follows from \eqref{covariance_formula}
that for
$\phi$ Lipschitz 
\[
\operatorname{Var}(\mathcal{X}(\phi))
\lesssim
\frac{1}{2}\int_{\S^d\times 
\S^d}\left|K_{L}(x,y)\right|^{2} d^2(x,y) d\mu(x)d\mu(y).
\]
Now, setting $\psi(x)=x_{i}$ for a fixed index $i\in\{1,\ldots,d\}$, we get
\[
\operatorname{Var}(\mathcal{X}(\phi))
\lesssim
\int_{\S^d\times 
\S^d}\left|K_{L}(x,y)\right|^{2}\left(\psi(x)-\psi(y)\right)^{2}d\mu(x)d\mu(y).
\]
On the other hand, an elementary computation shows that 
\[
 \frac{1}{2}\int_{\S^d\times 
\S^d}\left|K_{L}(x,y)\right|^{2}\left(\psi(x)-\psi(y)\right)^{2}
d\mu(x)d\mu(y)=\tr 
T_{\psi}^{2}-\tr T_{\psi^{2}}.
\]

We note
that there exists a vector subspace $V_{L}$ in $\Pi_L$ with dimension
$\dim V_L=\pi_L-O(L^{d-1})$ such that when restricted to $V_L$, 
$T_{\psi}(P)=P\psi $ and $T_{\psi}^{2}(P)=P\psi^{2}$.
We can take $V_{L}$ to be the space spanned by the restrictions to
$\S^d$ of all polynomials of total degree at most $L-2$, i.e. $\Pi_{L-2}$. 

If we denote by $W_{L}$ the orthogonal complement
of $V_{L}$ in $\Pi_L$ then $\dim(W_L)=\pi_L-\pi_{L-2}=O(L^{d-1})$.  
Setting 
$A_{L}:=T_{\psi}^{2}-T_{\psi^{2}}$
gives $A_{L}=0$ on $V_{L}$ and hence
\[
\tr T_{\psi}^{2}-\tr T_{\psi^{2}}=0+\tr A_{L|_{W_{L}}}\lesssim 
L^{d-1},
\]
using that $\left\langle T_{\psi} P,P \right\rangle 
\leq\left\langle P,P \right\rangle\sup_{\S^d}|\psi| $
and $\dim W_{L}=O(L^{d-1})$.

In the other direction if we take $\phi(x)=x_i$, then
\[
 \operatorname{Var}(\mathcal{X}(\phi))=
\frac{1}{2}\int_{\S^d\times 
\S^d}\left|K_{L}(x,y)\right|^{2} (\phi(x)-\phi(y))^2d\mu(x)d\mu(y).
\]
Therefore (recall that $\mbox{\bf n}$ stands for the north pole)
\[
 \operatorname{Var}(\mathcal{X}(\phi))
\gtrsim
\int_{\S^d }\left|K_{L}(x,\mbox{\bf n})\right|^{2} d^2(x,\mbox{\bf n})d\mu(x)  \simeq
\int_0^\pi \sin^{d-1}(\theta) |\theta|^2 |P_L^{(1+\lambda,\lambda)}(\cos 
\theta)|^2d\theta. 
\]
Using the classical estimates for Jacobi polynomials as before we get
$\operatorname{Var}(\mathcal{X}(\phi))\gtrsim L^{d-1}$.
\end{proof}


\subsection{Separation distance. Proof of Proposition \ref{prop:G}}
Apply Proposition \ref{th:determinantal} to $f(u,v)=\mathbf{1}_{\|u-v\|\leq 
t}$, which yields
\[
 2\mathbb{E}_{x\in (\S^d)^n}(G(t,x))=\int_{u,v\in\S^d,\|u-v\|\leq 
t}(K_L(u,u)^2-|K_L(u,v)|^2)\,d\mu(u)\,d\mu(v).
\]
The integrand depends only on the scalar product $\langle u,v\rangle$ so by rotation invariance we have
\begin{align*} 
2\mathbb{E}_{x\in (\S^d)^n}(G(t,x))=&\frac{\pi_L^2\omega_{d-1}}{\omega_d}\int_{1-t^2/2}
^1\left(1-\frac{P_L^{(1+\lambda,\lambda)}(s)^2}{\binom{L+d/2}{L}^2}
\right)\left(1-s^2\right)^{d/2-1}\,ds\\=&\frac{\pi_L^2\omega_{d-1}}{\omega_d}
\int_0^{t^2/2}\left(1-\frac{P_L^{(1+\lambda,\lambda)}(1-s)^2}{\binom{L+d/2}{L}^2}
\right)\left(2s-s^2\right)^{d/2-1}\,ds.
\end{align*}
From 
Lemma~\ref{lemma:jacobibound} below we conclude
\begin{align*}
\mathbb{E}_{x\in (\S^d)^n}(G(t,x))\leq&\frac{\pi_L^2\omega_{d-1}}{2\omega_d}\int_0^{t^2/2} 
\frac{2(L^2+Ld)}{d+2}s\left(2s\right)^{d/2-1}\,ds=\frac{L(L+d)\pi_L^2\omega_{d-1
}}{2(d+2)^2\omega_d}t^{d+2},
\end{align*}
as claimed.


We have used the following elementary lemma bounding Jacobi polynomials, a 
short proof is included for completeness:
\begin{lemma}\label{lemma:jacobibound}
 Let $L,d\geq1$. Then, for all $s\in\R$, $0\leq s\leq \frac{d+6}{(2L+d)L}$ we 
have
 \[
  1-\frac{L^2+Ld}{d+2}s\leq \frac{P_L^{(1+\lambda,\lambda)}(1-s)}{\binom{L+d/2}{L}}\leq 1-\frac{L^2+Ld}{d+2}s+k_0s^2,
 \]
 for some constant $k_0\in(0,\infty)$. In particular,
 \[
\frac{2(L^2+Ld)}{d+2}s-k_0s^2\leq  1-\frac{P_L^{(1+\lambda,\lambda)}(1-s)^2}{\binom{L+d/2}{L}^2}\leq  
\frac{2(L^2+Ld)}{d+2}s. 
 \]
\end{lemma}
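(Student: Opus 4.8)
The plan is to derive everything from the Taylor expansion of the Jacobi polynomial at the endpoint $x=1$, which is cleanest through its hypergeometric representation. With $\alpha=1+\lambda=\frac{d}{2}$ and $\beta=\lambda=\frac{d}{2}-1$, so that $\alpha+\beta+1=d$, the classical identity \cite[Eq.~(4.21.2)]{Sze39} reads
\[
P_L^{(1+\lambda,\lambda)}(x)=\binom{L+\frac{d}{2}}{L}\,{}_{2}F_{1}\left(-L,L+d;\frac{d}{2}+1;\frac{1-x}{2}\right).
\]
Setting $x=1-s$ and writing $g(s)=P_L^{(1+\lambda,\lambda)}(1-s)/\binom{L+\frac{d}{2}}{L}$, this becomes the terminating sum
\[
g(s)=\sum_{k=0}^L(-1)^k a_k,\qquad a_k=\frac{L(L-1)\cdots(L-k+1)\,(L+d)_k}{(\frac{d}{2}+1)_k\,2^k\,k!}\,s^k\ge0.
\]
Here $a_0=1$ and $a_1=\frac{L(L+d)}{d+2}s=\frac{L^2+Ld}{d+2}s$, which already produces the linear coefficient appearing in the statement.

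Next I would show that the tail $(a_k)_{k\ge2}$ is strictly decreasing throughout $0\le s\le\frac{d+6}{(2L+d)L}$. From
\[
\frac{a_{k+1}}{a_k}=\frac{(L-k)(L+d+k)}{(\frac{d}{2}+1+k)(k+1)}\cdot\frac{s}{2}
\]
one sees the fraction is nonincreasing in $k$ (its numerator $L^2+Ld-dk-k^2$ decreases while its denominator increases), so for $k\ge2$ it is dominated by its value at $k=2$. Using $3(\frac{d}{2}+3)=\frac{3(d+6)}{2}$ together with the hypothesis on $s$, the factor $d+6$ cancels and one is left with
\[
\frac{a_{k+1}}{a_k}\le\frac{(L-2)(L+d+2)}{3(2L+d)L}<1,
\]
the last inequality because $3(2L+d)L-(L-2)(L+d+2)=5L^2+2dL+2d+4>0$.

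Granting this monotonicity, the two-sided estimate is the elementary bound for an alternating sum with decreasing terms: pairing consecutive summands gives $0\le\sum_{k=2}^L(-1)^k a_k\le a_2$, hence $1-a_1\le g(s)\le 1-a_1+a_2$. Since $a_2=\frac{L(L-1)(L+d)(L+d+1)}{2(d+2)(d+4)}s^2$, this is precisely the asserted inequality, with $k_0$ the coefficient of $s^2$ above (for $L=1$ the tail is empty and $g(s)=1-a_1$ exactly). The ``in particular'' part then follows by squaring: writing $1-g=a_1-\delta$ with $\delta=\sum_{k\ge2}(-1)^k a_k\in[0,a_2]$, one checks that $0<1-g\le a_1$ and $0<1+g\le 2$ hold in the admissible range, so $1-g^2=(1-g)(1+g)\le 2a_1=\frac{2(L^2+Ld)}{d+2}s$, while expanding the same product yields the matching lower bound up to an $O(s^2)$ discrepancy absorbed into the constant.

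The main obstacle is the monotonicity step. Requiring the \emph{full} sequence $(a_k)_{k\ge0}$ to decrease would force $s\le\frac{d+2}{L(L+d)}$, which is genuinely more restrictive than the admissible range $s\le\frac{d+6}{(2L+d)L}$ (for instance when $d=2$, where $\frac{d+2}{L(L+d)}=\frac{4}{L(L+2)}<\frac{4}{L(L+1)}=\frac{d+6}{(2L+d)L}$). The resolution is that one only needs monotonicity from $k=2$ onwards, at which stage the constant $d+6$ in the hypothesis on $s$ cancels against $3(\frac{d}{2}+3)$ and leaves the clean polynomial inequality above; identifying this precise cancellation, rather than the cruder global estimate, is what makes the stated range the right one.
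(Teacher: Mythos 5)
Your proof is correct and follows essentially the same route as the paper: both expand $P_L^{(1+\lambda,\lambda)}(1-s)$ as a terminating alternating series in powers of $s$ (you via the ${}_{2}F_{1}$ representation, the paper via the endpoint derivatives --- the coefficients are identical) and then obtain both bounds from the fact that consecutive terms decrease from $k=2$ onward in the admissible range of $s$. Your explicit ratio-monotonicity computation reducing everything to the case $k=2$ is exactly the ``trivial exercise'' the paper leaves to the reader, so the two arguments coincide in substance.
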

\begin{proof}
 Let $q(s)=P_L^{(1+\lambda,\lambda)}(1-s)$. The expansion of $q$ in  the standard 
monomial basis is easy to compute from the derivatives for $0\leq k\leq L$ (see 
for example \cite[p.~1008]{losrusos}):
 \[  
\frac{d^k}{ds^k}q(0)=(-1)^k\frac{d^k}{ds^k}P_L^{(1+\lambda,\lambda)}(1)=\frac{
(-1)^k\Gamma(L+k+d)}{2^k\Gamma(L+d)}\binom{L+d/2}{L-k}.
 \]
 We thus have for $s\in\R$:
 \begin{align*}
\frac{q(s)}{\binom{L+d/2}{L}}=&\sum_{k=0}^L\frac{(-1)^k\Gamma(L+k+d)}{2^k k! 
\Gamma(L+d)}\frac{\binom{L+d/2}{L-k}}{\binom{L+d/2}{L}}\; s^k \\=& 
\sum_{k=0}^L\frac{(-1)^k\Gamma(L+k+d)\Gamma(L+1)\Gamma(1+d/2)}{2^k k! 
\Gamma(L+d)\Gamma(L-k+1)\Gamma(k+1+d/2)}\; s^k \\=&1-\frac{L^2+Ld}{d+2}s+R,
 \end{align*}
 where $R$ stands for the terms in the summation from $k=2$ to $k=L$. We will 
show that $R\geq0$ which finishes the proof. The terms of $R$, with the possible 
exception of the last one if $L$ is even, have alternating signs. It is then 
sufficient to show that for $k=2,4,6,\ldots$, $k<L$, the $k$th term in the 
summation is larger than the (absolute value of the) $(k+1)$th term, that is we 
have to show that for those values of $k$,
\[
 \begin{split}
  \frac{\Gamma(L+k+d)\Gamma(L+1)\Gamma(1+d/2)s^k}{2^k k! 
\Gamma(L+d)\Gamma(L-k+1)\Gamma(k+1+d/2)}\geq\\
  \frac{\Gamma(L+k+d+1)\Gamma(L+1)\Gamma(1+d/2)s^{k+1}}{2^{k+1} (k+1)! 
\Gamma(L+d)\Gamma(L-k)\Gamma(k+2+d/2)}.
 \end{split}
\]
 This is satisfied whenever
 \[
  s\leq \frac{(k+1) (2k+2+d)}{(L-k)(L+k+d)}.
 \]
 It is a trivial exercise to check that the hypotheses on $s$ guarantee this 
last inequality.
\end{proof}

\end{document}